\numberwithin{equation}{section}
\newtheorem{theorem}{Theorem}[section]
\newtheorem{lemma}{Lemma}[section]
\newtheorem{remark}{Remark}[section]
\providecommand{\abs}[1]{\lvert #1\rvert}
\providecommand{\norm}[1]{\lVert #1\rVert}
\newcounter{neweqn}
\newcommand{\beq}[1]{\addtocounter{neweqn}{1}\begin{equation}\label{#1}}
\newcommand{\eeq}{\end{equation}}
\newcommand{\nc}{\newcommand}
\newcommand{\beray}[1]{\addtocounter{neweqn}{1}\begin{eqnarray}  \label{#1}}
\nc{\vb}{\mathbf{v}}
\nc{\bx}{\mathbf{x}}
\nc{\by}{\mathbf{y}}
\nc{\bz}{\mathbf{z}}
\nc{\bu}{\mathbf{u}}
\nc{\bv}{\mathbf{v}}
\nc{\ba}{\mathbf{a}}
\nc{\bs}{\mathbf{s}}
\nc{\bq}{\mathbf{q}}
\nc{\bd}{\mathbf{d}}
\nc{\bb}{\mathbf{b}}
\nc{\bc}{\mathbf{c}}
\nc{\bi}{\mathbf{i}}
\nc{\bfr}{\mathbf{r}}
\nc{\bP}{\mathbf{P}}
\nc{\bQ}{\mathbf{Q}}
\nc{\R}{\mathbb R}
\nc{\N}{\mathbb N}
\nc{\bbC}{\mathbb C}
\nc{\D}{\mathbb D}
\nc{\Z}{\mathbb Z}
\nc{\F}{\mathbf F}
\nc{\bbS}{\mathbb S}
\nc{\bE}{\mathbf E}
\nc{\B}{\cal B}
\nc{\br}{\bigr}
\nc{\bl}{\bigl}
\nc{\Bl}{\Bigl}
\nc{\Br}{\Bigr}
\nc{\ind}[1]{\,\mathbf{1}_{\{#1\}}\,}
\author{Anatolii A. Puhalskii
}
\title{Moderate deviations of many--server queues, \\
 idempotent processes and  quasipotentials 
}
\begin{document}
\maketitle
\sloppy

\begin{abstract}
A Large Deviation Principle (LDP)  is
established for the stationary distribution of
the number of customers in a many--server queue in
heavy traffic for a  moderate deviation scaling akin to the
Halfin--Whitt regime. The interarrival and service times are assumed
generally distributed. The deviation function is given by a
quasipotential. It is  related to the longterm
idempotent distribution of the large deviation limit of the
number-in-the-system process.  New results on the 
 trajectorial LDP for that process are also obtained.
Proofs rely on the characterisation of large deviation relatively
compact sequences as  exponentially tight ones and use methods of weak
convergence and idempotent processes. Another contribution of the
paper concerns    bounds on the
higher--order moments of
counting renewal processes.
\end{abstract}

\section{Introduction}
\label{sec:introduction}

The analysis of many--server queues with generally distributed
interarrival and service times
 is challenging.   In order to gain
insight, various asymptotics
have been explored, see, e.g., Borovkov \cite{Bor67q,Bor80}, Iglehart  \cite{Igl65,Igl73} and
Whitt \cite{Whi82}.
 Of particular interest is the
heavy--traffic  asymptotic regime  introduced in Halfin and
Whitt \cite{HalWhi81} where
  the service time distribution is held fixed, whereas 
 the number of servers, $n$\,, and
the arrival rate, $\lambda_n$\,, 
grow without a bound in such a way that 
\[
\sqrt{n}(1-\rho_n)\to\beta\in\R\,,
\]
 where $\rho_n=\lambda_n/(n\mu)$ is the traffic intensity, $\mu$
representing the service rate.
With  $Q_n(t)$ denoting the number of customers present at
time  $t$\,, the
sequence of processes $(Q_n(t)-n)/\sqrt{n}$\,,
considered as  random elements of the associated Skorohod space,
under fairly general hypotheses, 
converges in law to a continuous--path process. 
In their  pioneering contribution,
   Halfin and Whitt \cite{HalWhi81} assumed
 renewal arrivals and
 exponentially distributed service times. The limit process then is an Ito
 diffusion with a piecewise linear drift.
In addition, for the case where  $\beta>0$\,, 
 the stationary
 distribution of the centred and normalised number--in--the--system
 process converges
 to the stationary distribution of
 the limit, which has a continuous density with an exponential upper
 tail and a normal lower tail. The limiting probability of a  customer having 
to await service is strictly between zero and one, which 
sets  the Halfin--Whitt regime apart
  and renders it useful for producing approximations.

 An extension of the process--level convergence to the case of generally
 distributed service times  is due to Reed
 \cite{Ree09}, see also Puhalskii and Reiman \cite{PuhRei00}, 
Puhalskii and Reed \cite{PuhRee10}. The limit
 process, however,  is not easily tractable, being a solution to a
 nonlinear renewal equation.
In an attempt at ''Markovisation'',
 Kaspi and Ramanan \cite{KasRam13}  incorporated elapsed service
times into state
descriptions
 and  arrived at infinite--dimensional limits. 
 Convergence of the stationary distribution of the
 Markovised number of customers was  addressed in  Aghajani and Ramanan
 \cite{AghRam20}. It was assumed
that the service time cdf
 has a 
density with certain smoothness and
 integrability properties. 
 The limit infinite--dimensional distribution
  is quite involved.
Special cases of the convergence of the stationary distribution were treated in 
Jelenkovi{\'c}, Mandelbaum and 
Mom{\v{c}}ilovi{\'c} \cite{JelManMom04} \footnote{ Theorem 1 in Jelenkovi{\'c}, Mandelbaum and 
Mom{\v{c}}ilovi{\'c} \cite{JelManMom04}
is missing a Lindeberg condition on the interarrival times.},
 who assumed deterministic
service times
and in Gamarnik and Mom{\v{c}}ilovi{\'c} \cite{GamMom08}, who
assumed 
 lattice--valued  service times   of finite support.

This paper undertakes a study of moderate deviations of 
 the  stationary   number of customers.
It assumes  the    heavy--traffic condition
\begin{equation}\label{eq:105}
  \frac{\sqrt{n}}{b_n}(1-\rho_n)\to\beta\,,
\end{equation}
where $b_n\to\infty$ and $b_n/\sqrt{n}\to0$\,.
The main result  is a Large Deviation Principle (LDP) for the stationary distribution of
the process $X_n=(X_n(t)\,,t\in\R_+)$\,, where
$  X_n(t)=(Q_n(t)-n)/(b_n\sqrt{n})$\,, provided
$\beta>0$\,. Unlike in  Aghajani and Ramanan
 \cite{AghRam20}, no fine properties of the service time cdf are
 assumed, moment conditions being imposed instead.
It is noteworthy that  deviation functions, a.k.a. rate functions,
a.k.a. action functionals,
 for moderate deviations
may capture  exponents in
the corresponding weak convergence limits, see Puhalskii
\cite{Puh99a}.

It has been an established practice  to describe large deviation
asymptotics of the invariant measures of Markov processes
 in terms of quasipotentials, which involve
trajectorial deviation
functions,  see
Freidlin
 and Wentzell \cite{wf2}, Schwarz  and Weiss \cite{SchWei95}.
In Puhalskii
 \cite{Puh19a} it was shown that a similar recipe may work for
non--Markov processes  that
  are  Markovian ''in the large deviation limit'', the latter meaning that the
integrand in the expression for the trajectorial
 deviation function 
   depends only on
 the state
 variable and its derivative.
 For many--server queues with generally distributed service times, however,
 not only is the process $X_n$
 non--Markov but the trajectorial 
deviation function 
  is
essentially ''non--Markov'' either in that the integrand 
 depends on the entire past of the trajectory, see Puhalskii \cite{Puh23}.
Notwithstanding, 
this paper shows
 that moderate deviations of the stationary distribution of
 $X_n$ are still governed  by the associated quasipotential.

A standard approach to establishing an LDP for the invariant
measures of a
Markov process has been to use formulas that relate
the invariant measure  to the  cycle lengths of  appropriately chosen
Markov chains and verify
the bounds in the definition of the LDP,
 a change of probability measure playing a prominent
role, cf. Freidlin and Wentzell \cite{wf2}, Schwarz and Weiss
\cite{SchWei95}.  In Puhalskii \cite{Puh91} it was proposed to model large
deviation theory on weak convergence theory and an analogue of
Prohorov's theorem was proved stating  that exponentially tight
sequences of probability measures on metric spaces are Large Deviation (LD) 
relatively  compact, see
also  Puhalskii \cite{Puh93}.
That result opened up a direction of research that
attempts to approach the task of proving an LDP  by using
global relations for the processes involved rather than by 
directly verifying
the  bounds in the definition.
The  idea is to
identify    the deviation function of an exponentially tight sequence
 through equations that arise as limits 
of
 equations satisfied by the stochastic processes in question, e.g., by
 taking a limit in a martingale problem, cf.
Puhalskii \cite{Puh01,Puh16}.

In relation to invariant measures,
it was proposed  in Puhalskii \cite{Puh03}  to  
 take LD limits 
in  the invariance relations.
By definition, an invariant measure can be written as
 a mixture of transient distributions
against the invariant measure itself so that,
  given a stationary stochastic process
$\overline X_n(t)$ with invariant measure $\mu_n$\,, for a bounded continuous
nonnegative function $f$ on the state space,
\begin{equation}
  \label{eq:29}
  \int f(x)^n\mu_n(dx)=\int\int f(x)^nP(\overline X_n(t)\in
  dx|\overline X_n(0)=y)\mu_n(dy)\,.
\end{equation}
Loosely speaking, 
if the transient distribution $P(\overline X_n(t)\in
  dx|\overline X_n(0)=y)$ LD converges to certain deviability $\Pi_{y,t}(x)$ (or
  obeys the LDP with deviation function $I_{y,t}(x)=-\ln\Pi_{y,t}(x)$ for that matter)\footnote{A review of the basics
 of
    Large Deviation (LD) convergence, which is just a different form
    of
the  LDP, and of idempotent processes, can be found in Puhalskii
\cite{Puh23,Puh23a}} and $\mu_n$ LD converges to
  deviability $\Pi$\,, then \eqref{eq:29} implies that
  \begin{equation}
    \label{eq:33'}
    \sup_x f(x)\Pi(x)=\sup_y\sup_x f(x)\Pi_{y,t}(x)\Pi(y)\,.
  \end{equation}
If $\Pi_{y,t}(x)$ converges to  limit $\hat \Pi(x)$\,, as
$t\to\infty$\,, that does not depend  on $y$\,, then letting $t\to\infty$
in \eqref{eq:33'} and noting that $\sup_y\Pi(y)=1$
implies, modulo certain technical conditions,
that $\sup_x f(x)\Pi(x)=\sup_x f(x)\hat\Pi(x)$ so that
$\Pi(x)=\hat\Pi(x)$\,.
Thus, 
\begin{equation}
  \label{eq:20}
  \Pi(x)=\lim_{t\to\infty}\Pi_{y,t}(x)\,. 
\end{equation}

A number of issues arise. Firstly, the exponential tightness of $\mu_n$
needs to be proved. 
The second issue concerns passing to an LD
 limit in the mixture on the righthand side of \eqref{eq:29} in order
 to obtain \eqref{eq:33'}. 
Essentially, continuous LD convergence is needed for the
  transient distributions  (or continuous
LDP for that matter), i.e.,
  that the
distribution
$P(\overline X_n(t)\in
  dx|\overline X_n(0)=y_n)$ LD converges to $\Pi_{y,t}(x)$ whenever $y_n\to
  y$\,, provided $\Pi(y)>0$\,. 
The third issue is  whether
the time limit of $\Pi_{y,t}(x)$ exists and does not depend on $y$\,. It holds provided
 the limit dynamical system for $\overline X_n(t)$ has  a unique 
 equilibrium and the trajectories of the system
are attracted to that equilibrium. In such a case,
\begin{equation}
  \label{eq:35'}
  \Pi(x)=\lim_{t\to\infty}\Pi_{O,t}(x)\,,
\end{equation}
where $O$ represents the  equilibrium.
The latter relation can be written in terms of deviation functions as
\begin{equation}
  \label{eq:36}
  I(x)=\inf_tI_{O,t}(x)\,,
\end{equation}
the righthand side representing the celebrated quasipotential of
Freidlin and Wentzell \cite{wf2},
where $I_{O,t}(x)=-\ln \Pi_{O,t}(x)$\,.
 If the uniqueness of the equilibrium
 does not hold, then 
  more sophisticated balance equations may be needed, see
 Puhalskii \cite{Puh23a}.

In this paper, in the same vein as developments in 
 Kaspi and Ramanan \cite{KasRam13} and
Aghajani and Ramanan \cite{AghRam20}, 
the state variable $Q_n(t)$ 
is appended with residual service times of customers in
service.
 A key distinction is that the
residual service times are encapsulated in an empirical distribution
function,
which device enables one to keep low the dimensions of the processes
involved. As the distribution of the process $X_n$ is of interest, 
the continuous LD convergence is needed for the distribution 
$P(X_n(t)\in dx|(X_n(0),\Phi_n(0)))$ where $\Phi_n(0)$ is a
properly centred and normalised version of the empirical cdf
 of  residual service times.
 The role of the initial condition  $y_n$ is played by
  a pair $(x_n,\phi_n)$\,. The variable $x_n$ accounts for the
initial number of customers and $\phi_n=(\phi_n(t)\,,t\in\R_+)$
is a
centred and normalised empirical cdf of the residual service times of 
 customers in service initially.
The initial condition     is therefore  functional as
$\phi_n$ is an element of the Skorohod space $\D(\R_+,\R)$\,.
 It is essentially
  the  process of the number of remaining customers out of those
  in service initially. 
The convergence of the  initial
 conditions is understood as convergence of 
$(x_n,\phi_n)$ to $(x,\phi)$ in $\R\times \D(\R_+,\R)$\,,
 as $n\to\infty$\,. The 
 condition that $\Pi(y)>0$ translates into the condition that 
$\Pi(x,\phi)>0$ for any LD limit point $\Pi$ of the stationary distribution of
 $(X_n(0),\Phi_n(0))$\,.
It is proved in the paper that
under the invariant measure the
number of customers present and 
the empirical cdf of residual service
times, when properly centred and normalised,
 are  exponentially tight as  random elements of $\R$ and 
Skorohod space $\D(\R_+,\R)$\,, respectively. (As a byproduct, the cdf
of the stationary residual service time converges to $F^{(0)}$
 that is the distribution of a delay in an
equilibrium renewal process with the service time distribution as the
generic interarrival time distribution, see Remark \ref{re:eq_lim}.)
 It may seem  remarkable that the
stationary residual service time empirical cdf (when appropriately
centred and normalised) proves to be
$\bbC_0$--exponentially tight in a fairly general situation
\footnote{ $\bbC_0$ represents the subset of
$\mathbb C$ of functions that vanish at infinity}, i.e., the
idempotent processes that may arise as its
LD limits 
 are continuous and vanish over time. 
Thus, in order to obtain a version of \eqref{eq:33'}, one needs trajectorial
LD convergence when the initial condition
 $(X_n(0),\Phi_n(0))$  LD converges to
some $(x,\phi)\in\R\times\bbC_0$\,. 
Available results on trajectorial moderate deviations
are limited to quite special
initial conditions. In  Puhalskii \cite{Puh23}, it was assumed that the
residual service times of customers initially in service are independent and
have cdf $F^{(0)}$\,. An extension
to the more general initial conditions is the first order of business.
Fortunately, virtually the same proof as in Puhalskii \cite{Puh23}
applies.

After that is taken care of, 
 a version of \eqref{eq:20}  is established,  with $y=(x,\phi)$\,.
It is proved first by using monotonicity considerations
 that \eqref{eq:35'} holds with  $O=(-\beta,\mathbf0)$\,,
where $\mathbf 0 $ represents the function from $\D(\R_+,\R)$ that is
identically equal to $0$\,.
Then,
 the limit idempotent process $ X$ that starts at $(x,\phi)\in\R\times
 \bbC_0$  
is coupled
 with the limit idempotent process that starts at $(-\beta,\mathbf
0)$\,. 
Since $X$ is an idempotent process with  time delay,
 constructing the coupling
takes keeping $X$  in a neighbourhood
of
$-\beta$ for an extended period of time  in order for the past
not to
matter too much. Yet, not always is exact coupling 
 achieved. To have
 the idempotent processes in question
get close to each other proves to be good enough.
In the end,
 the limits of $\Pi_{(x,\phi),t}$ and of  $\Pi_{(-\beta,\mathbf0),t}$
are shown to agree, as $t\to\infty$\,.
The property that $\phi$ vanishes
 over time is essential.
This completes the check that the time limit of $\Pi_{(x,\phi),t}(y)$ exists and does not depend on $(x,\phi)$\,.

Checking  exponential tightness  constitutes the  most 
 challenging part of the proof and is responsible for most of the
 moment conditions in the statement of the main result. 
As mentioned, the exponential tightness of $X_n$ as a random element of a
Skorohod space was addressed in
Puhalskii \cite{Puh23} for the
special initial condition. This paper extends the 
property 
to more general initial conditions, to 
residual service times and to  stationary distributions.
Similarly to  Puhalskii \cite{Puh23},
 proofs use
 the exponential analogue of Lenglart's domination property
in Puhalskii \cite{Puh01}.
The proof of the exponential tightness of the stationary distribution of
 $X_n$ employs a general upper
 bound  in Gamarnik and Goldberg \cite{GamGol13} and
 a lower  bound  that is furnished
 by an infinite
server queue.
 Markov's  inequality is used extensively,
so,  higher--order moments of counting renewal processes
 need to be controlled. 
The available body of literature 
focuses on moments of fixed order and 
emphasises 
dependence of moment  bounds on time, whereas dependence  on the
moment's order is overlooked.
Proving the exponential tightness,
however,
  requires dealing with moments of orders
going to infinity, so, dependence  on the moment's order 
becomes significant. This paper fills the void  by
providing moment bounds with 
explicit dependence   on both   order and time.

The paper's organisation 
is as follows.
In Section
\ref{sec:fluid-stoch-appr}, a precise specification of the model 
is provided and
 the main result on
the LDP for the stationary distribution of $X_n$ is stated.
Subsequent sections use the terminology of LD convergence and
idempotent processes. In Section \ref{sec:ld-conv-skor}, the extension of 
the trajectorial LD convergence of $X_n$
is addressed.
Sections \ref{sec:expon-tightn-stat} and \ref{sec:residual} deal
with the  exponential
tightness of the stationary distributions of the number of
customers and of the residual service times, respectively. 
In Section  \ref{sec:moder-devi-stat}, the idempotent process coupling is
constructed and  the main result is proved. 
The bounds on  the higher order moments of counting renewal processes
are developed in Appendix \ref{sec:higher}.
Appendices  \ref{renewal}, \ref{c_0}
 and \ref{cont_conv} are concerned with 
a nonlinear renewal equation satisfied by $X$\,, 
with   $\bbC_0$--exponential tightness  and with
large deviations of mixtures, respectively, which are needed
 for the proofs.

\section{The LDP for the stationary number-in-the-system}
\label{sec:fluid-stoch-appr}
Assume as  given a sequence of many--server queues indexed by $n$, where $n$
represents the number of  servers, which are assumed homogeneous.
 Arriving customers are served on a first--come--first--served basis.  If upon a customer arrival, there are available
servers, the customer starts being served by one of the available
servers, chosen at random. Otherwise, the customer joins the queue
and awaits their turn to be served. On completing service, the customer
leaves, thus relinquishing the server.
Let $Q_n(t)$ denote the number of customers present at time
$t$\,. Out of those, $Q_n(t)\wedge n $
customers are in service and $(Q_n(t)-n)^+$ customers are
in the queue.  
Service times of  customers initially in the queue   and  service times
of exogenously arriving  customers   are denoted
 by
 $\eta_1,\eta_2,\ldots$ in  the
 order of  entering service and  come from an
iid sequence of positive unbounded random variables with
continuous cdf
$F$\,.  In terms of $F$\,, it is thus assumed that
\begin{equation}
  \label{eq:45}
F(0)=0\,, F(x)<1\,, \text{ for all }x\,.
\end{equation}
The mean service time $\mu^{-1}=\int_0^\infty x\,dF(x)$ is assumed to
be finite.
 Residual service times of customers in service initially
 come from a sequence of  positive random variables
$\eta^{(0)}_1,\eta^{(0)}_2,\ldots$\,.
The number of exogenous arrivals
by $t$ is denoted by  $A_n(t)$, with $A_n(0)=0$\,. It is assumed that $A_n(t)$ has unit jumps.
All stochastic processes are assumed to have
 rightcontinuous paths with lefthand limits.
The random entities $Q_n(0)$\,, $\{\eta_1,\eta_2,\ldots\}$\,,
$\{\eta^{(0)}_1,\eta^{(0)}_2,\ldots\}$ and $A_n=(A_n(t)\,,t\in\R_+)$ are
assumed  independent. 

Let 
${{\hat A}_n}(t)$ denote the number of customers that enter service after time
 $0$ and by time 
$t$\,, with 
${{\hat A}_n}(0)=0$\,.  Since the distribution function
 of  $\eta_i$ is continuous, $\hat A_n$ has
unit jumps a.s.
Balancing  arrivals and departures yields the equation
\begin{equation}
             \label{eq:107}
    Q_n(t)=Q_n^{(0)}(t)+(Q_n(0)-n)^+
+A_n(t)-\int_0^t\int_0^t \mathbf{1}_{\{x+s\le t\}}
\,d\,\sum_{i=1}^{{{\hat A}_n}(s)}\mathbf{1}_{\{\eta_i\le x\}}\,,
\end{equation}
where
\begin{equation}
  \label{eq:1a}
Q_n^{(0)}(t)=  \sum_{i=1}^{Q_n(0)\wedge n}\,\ind{\eta^{(0)}_i> t}\,,
\end{equation}
which quantity represents the number of customers present at time $t$ out
of those in service initially and
\begin{equation}
  \label{eq:79}
  \int_0^t\int_0^t \mathbf{1}_{\{x+s\le t\}}
\,d\,\sum_{i=1}^{{{\hat A}_n}(s)}\mathbf{1}_{\{\eta_i\le x\}}=
\sum_{i=1}^{\hat A_n(t)}\ind{\eta_i+\hat\tau_{n,i}\le t},
\end{equation}
which quantity represents the number of customers that  enter service after  time $0$
and  depart by time $t$\,, $\hat\tau_{n,i}$ denoting the $i$th jump
time of $\hat A_n$\,, i.e.,
\begin{equation}
  \label{eq:24}
  \hat\tau_{n,i}=\inf\{t:\,\hat A_n(t)\ge i\}\,.
\end{equation}
 In addition, since all customers
that are either in the queue initially  or have arrived exogenously by
time $t$  must  either be in the queue at time $t$
 or have entered service by time $t$\,,
\begin{equation}
    \label{eq:40}
 (Q_n(0)-n)^++A_n(t)=(Q_n(t)-n)^++{\hat A}_n(t)\,.
\end{equation}
 For the existence and uniqueness of solutions to
\eqref{eq:107} and \eqref{eq:40}, the reader is referred to
 Puhalskii and Reed
\cite{PuhRee10}.

If arrivals at the $n$th  queue occur according to a renewal process,
if the system is underloaded\,, i.e., $\rho_n<1$\,, and 
if the interarrival 
distribution is spread out, then there exists a
unique stationary distribution of $Q_n(t)$\,,
which is also the longterm distribution for every initial state, see, e.g.,
 Corollary 2.8 on p.347 in Asmussen \cite{Asm03}.
Let random variable $ Q^{(s)}_{n}$ be distributed according to 
  the stationary distribution of $Q_n(t)$ and let 
  \[
     X^{(s)}_{n}=\frac{\sqrt{n}}{b_n}\,( \frac{Q^{(s)}_{n}}{n}-1)\,.
  \]
Given $q=(q(t)\,,t\in\R_+)\in\D(\R_+,\R)$\,, let
\begin{equation}
   \label{eq:2}
         I^Q(q)=\frac{1}{2}\inf\{
\int_0^\infty \dot w(t)^2\,dt
+\int_0^\infty\int_0^1 \dot k(x,t)^2\,dx\,dt\}\,,
 \end{equation}
the $\inf$ being taken over $
w=(w(t)\,,t\in\R_+)\in\D(\R_+,\R)$ and
$k=((k(x,t)\,,x\in[0,1])\,,t\in\R_+)\in\D(\R_+,\D([0,1],\R_+))$ such that
$w(0)=0$\,, $k(x,0)=k(0,t)=k(1,t)=0$\,,
 $w$ and $k$ are absolutely continuous with respect to Lebesgue measures
on  $\R_+$ and $[0,1]\times \R_+ $\,, respectively, and, for
all $t$\,, 
\begin{multline}\label{eq:1}
q(t)=
-\beta
 +\int_0^tq(t-s)^+\,dF(s)
+\int_0^t\bl(1-F(t-s)\br)\sigma\,\dot w(s)\,ds\\
+\int_{\R_+^2} \ind{x+s\le t}\,\dot k(F(x),\mu s)\,dF(x)\,\mu \,ds
\,,
\end{multline}
 where 
$\dot w(t)$ and $\dot k(x,t)$
 represent the respective Radon--Nikodym derivatives.
If  
$w$ and $k$ as indicated do not exist, then $I^Q(q)=\infty$\,.
 By Lemma B.1 in
Puhalskii and Reed \cite{PuhRee10}, the equation 
\eqref{eq:1} has a unique solution for $q(t)$
as an element of $\mathbb L_{\infty,\text{loc}}(\R_+,\R)$\,.
Furthermore, the function $q$ is seen to be continuous, it is
absolutely continuous with respect to Lebesgue measure 
provided so is the function $F$\,.

Say  that, given real--valued sequence 
$r_n\to\infty$\,, as $n\to\infty$\,,   sequence $P_n$
of probability laws on the Borel $\sigma$--algebra of
  metric space $M$ and   $[0,\infty]$--valued function $I$
 on $M$ such that the sets $\{y\in M:\,I(y)\le \gamma\}$ are compact
 for all $\gamma\ge0$\,, the sequence $P_n$  obeys
the LDP for rate $r_n$ with  deviation function $I$ provided
$  \lim_{n\to\infty}1/r_n\,\ln P_n(W)=-\inf_{y\in W}I(y)\,,
$ for every Borel set $W$ such that the infima of $I$ over the
interior  and  closure of $W$ agree.

The main result is stated next. Let $\xi_n$ represent a generic
interarrival time for the $n$th queue and let $\eta$ represent a generic
service time.
\begin{theorem}
  \label{the:inv}
Suppose that
 \begin{enumerate}
\item 
each $A_n$ is a  renewal process of rate $\lambda_n$ whose
interarrival time distribution is spread out,
\item
 the sequence of processes
$\bl((A_n(t)-\lambda_nt)/(b_n\sqrt{n}),\,t\in\R_+\br)
$
obeys the LDP in $\D(\R_+,\R)$  for rate $b_n^2$\,, as $n\to\infty$\,,
with deviation function 
$I^A(a)$ such that $I^A(a)=1/(2\sigma^2)\int_0^\infty\dot a(t)^2\,dt$\,, 
provided $a=(a(t)\,,t\in\R_+)$ is an absolutely continuous nondecreasing function with
$a(0)=0$\,, and
$I^A(a)=\infty$\,, otherwise, where $\sigma>0$\,,
\item the heavy traffic condition \eqref{eq:105} holds with $\beta>0$\,,
\item $\sup_nE(n\xi_n)^{2}<\infty$\,,
$\sup_n n^{1/2}/b_n
(E\xi_n^{b_n^2+1})^{1/b_n^2}<\infty$\,,
$\sup_nb_nn^{1/2}(E\xi_n^{b_n^2})^{1/b_n^2}<\infty$\,,
 \item $  \sup_nb_nn^{-1/2}
(E\eta^{b_n^2+1})^{1/b_n^2}<\infty\,,$
\item 
$\sup_nn^{1/b_n^2}<\infty$\,, $ \lim_{n\to\infty}  b_n^6n^{-1}=0$\,.
\end{enumerate}
Then
 the sequence $\{X^{(s)}_{n}\,,n\in\N\}$
  obeys the LDP in $\R$ for rate $b_n^2$ with
  deviation function
  \[
     I^{(s)}(x)=\inf I^Q(q)\,,
  \]
 the $\inf$ being taken over trajectories $q=(q(t)\,,t\in\R_+)$ 
such that
$q(0)=-\beta$ and $q(t)\to x$\,, as  $t\to\infty$\,. 
\end{theorem}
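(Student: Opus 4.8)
The plan is to follow the scheme sketched in the introduction around equations \eqref{eq:29}--\eqref{eq:36}, specialised to the many--server queue with the state variable $(X_n(t),\Phi_n(t))$, where $\Phi_n$ is the centred and normalised empirical cdf of residual service times. First I would record the invariance relation \eqref{eq:29} for the stationary process: for bounded continuous nonnegative $f$ on $\R$, writing $\overline X_n$ for the stationary version of $X_n$ (so $\overline X_n(0)\stackrel{d}{=}X^{(s)}_n$) augmented with the residual--service--time empirical cdf,
\[
  \int f(x)^{b_n^2}\mu_n(dx)=\int\int f(x)^{b_n^2}\,P\bl(\overline X_n(t)\in dx\mid(\overline X_n(0),\Phi_n(0))=(y,\psi)\br)\,\nu_n(d(y,\psi))\,,
\]
$\nu_n$ denoting the joint stationary law of $(X_n(0),\Phi_n(0))$ and $\mu_n$ its first marginal. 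The strategy is then: (i) prove that $\nu_n$ is exponentially tight on $\R\times\D(\R_+,\R)$ for rate $b_n^2$, in fact that any LD limit point is supported on $\R\times\bbC_0$; (ii) pass to the LD limit in the displayed mixture using the continuous trajectorial LD convergence of the transient distributions together with the large--deviations--of--mixtures results of the appendix, obtaining the idempotent analogue \eqref{eq:33'}; (iii) show that the time limit $\lim_{t\to\infty}\Pi_{(x,\phi),t}(\cdot)$ exists, is independent of $(x,\phi)\in\R\times\bbC_0$, and equals $\Pi_{(-\beta,\mathbf0),t}$'s limit, via the monotonicity argument yielding \eqref{eq:35'} with $O=(-\beta,\mathbf0)$ and the idempotent--process coupling; and finally (iv) identify the resulting quasipotential with $I^{(s)}(x)=\inf\{I^Q(q):q(0)=-\beta,\ q(t)\to x\}$ and invoke the equivalence of the LDP with LD convergence to conclude.

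Step (i) is the heart of the matter and the main obstacle. I would prove the exponential tightness of $X^{(s)}_n$ in $\R$ by sandwiching: the upper bound on $Q^{(s)}_n$ comes from the general stationary--queue bound of Gamarnik and Goldberg \cite{GamGol13}, while a matching lower bound is furnished by comparison with an infinite--server queue carrying the same arrival stream; both directions reduce, after centring and normalising by $b_n\sqrt n$, to controlling $P(\text{(counting renewal functional)}\ge b_n^2\,x)$, which I would handle by Markov's inequality applied to moments of order $\sim b_n^2$. This is exactly where the moment bounds of Appendix \ref{sec:higher} and the moment hypotheses (4)--(6) of the theorem enter, and I expect the delicate bookkeeping to lie in tracking the dependence of the renewal moments on both the order $b_n^2$ and the time horizon, and in verifying that the residual hypotheses $n^{1/b_n^2}=O(1)$ and $b_n^6/n\to0$ suffice. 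The exponential tightness of the stationary residual--cdf component, and the sharper $\bbC_0$--exponential tightness, I would obtain by the exponential Lenglart domination argument of Puhalskii \cite{Puh01,Puh23}, following Section \ref{sec:residual} as referenced, together with the $\bbC_0$--exponential tightness criterion of Appendix \ref{c_0}; this is what forces the condition $\Pi(x,\phi)>0\ \Rightarrow\ \phi\in\bbC_0$ used later.

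For step (ii), I would apply the extended trajectorial LD convergence result of Section \ref{sec:ld-conv-skor}: whenever $(x_n,\phi_n)\to(x,\phi)$ in $\R\times\D(\R_+,\R)$, the conditional law $P(X_n(\cdot)\in\,\cdot\mid(X_n(0),\Phi_n(0))=(x_n,\phi_n))$ LD converges to a deviability $\Pi_{(x,\phi),\cdot}$ whose deviation function is $I^Q$ from \eqref{eq:2}--\eqref{eq:1} (read off the value at $x$ to get $\Pi_{(x,\phi),t}(x)$); combined with $\nu_n$ LD converging along subsequences to some $\Pi$ with $\sup_{(y,\psi)}\Pi(y,\psi)=1$ and supported on $\R\times\bbC_0$, the mixture lemma of Appendix \ref{cont_conv} gives $\sup_x f(x)\Pi(x)=\sup_{(y,\phi)}\sup_x f(x)\Pi_{(y,\phi),t}(x)\Pi(y,\phi)$. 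Step (iii): a pathwise monotonicity comparison (coupling arrival streams) shows the limit dynamical system started from any $(x,\phi)$ with $\phi\in\bbC_0$ is attracted to the equilibrium $(-\beta,\mathbf0)$, giving \eqref{eq:35'}; to upgrade this to the idempotent level I would construct, for a given finite--energy trajectory emanating from $(x,\phi)$, a trajectory from $(-\beta,\mathbf0)$ with asymptotically the same cost, by first driving the process into and holding it in a small neighbourhood of $-\beta$ long enough that the delayed ($F$--weighted, past--dependent) terms in \eqref{eq:1} become negligible — here the vanishing of $\phi$ at infinity is exactly what makes the tail of the past forgettable — and then splicing. Taking $t\to\infty$ in the mixture identity and using $\sup\Pi=1$ then yields $\Pi(x)=\lim_{t\to\infty}\Pi_{(-\beta,\mathbf0),t}(x)$, i.e. $I(x)=\inf_t I_{(-\beta,\mathbf0),t}(x)$, and unwinding the definition of $I_{(-\beta,\mathbf0),t}$ through \eqref{eq:2}--\eqref{eq:1} identifies this with $I^{(s)}(x)=\inf\{I^Q(q):q(0)=-\beta,\ q(t)\to x\}$. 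A final technical point I would verify is that $I^{(s)}$ has compact level sets (goodness), which follows from the $\R$--exponential tightness established in step (i) together with the general fact that an exponentially tight LD relatively compact sequence has a unique limit once the value of the deviability is pinned down on a separating class of sets.
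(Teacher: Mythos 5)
Your proposal follows the paper's own proof scheme in all essentials: Gamarnik--Goldberg upper bound plus an infinite--server lower bound for the exponential tightness of $X^{(s)}_n$ in $\R$ (Theorem~\ref{the:stat_exp_tight}), exponential Lenglart domination for the $\bbC_0$--exponential tightness of the stationary residual--service--time empirical cdf (Theorem~\ref{the:emp_cdf}), the extended trajectorial LD convergence of Theorem~\ref{the:heavy_traffic}, the large--deviations--of--mixtures lemma of Appendix~\ref{cont_conv}, and an idempotent--trajectory coupling that exploits $\phi\in\bbC_0$ to forget the past (Lemmas~\ref{le:limit_dynamics}--\ref{le:stat_LDP}). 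The only slight imprecision is in your description of the role of monotonicity: the paper uses it not to obtain attraction to the equilibrium (which is proved analytically from the nonlinear renewal equation in Lemma~\ref{le:renewal_limit} and Lemma~\ref{le:limit_dynamics}), but at the idempotent level to show that $\hat\Pi_t(y)$ is nondecreasing in $t$ --- ``sitting at $-\beta$ incurs zero cost'' via a delayed--trajectory device --- and so the $t$--limit $\hat\Pi(y)$ exists before the coupling is invoked.
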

\begin{remark}
According to the heavy traffic condition \eqref{eq:105}, $\lambda_n$
is of order $n$ so that the expectation of
$\hat \xi_n=n\xi_n$ is of order one.
The  higher--order
 moment conditions on $\xi_n$ in part 4 take the form
$\sup_n b_n^{-1}n^{-1/2}
(E\hat\xi_n^{b_n^2+1})^{1/b_n^2}<\infty$
and $\sup_nb_nn^{-1/2-1/b_n^2}(E\hat\xi_n^{b_n^2})^{1/b_n^2}<\infty$\,,
  which do not seem  to be
 too restrictive.
 For instance, if
$\hat\xi_n$ is exponential of rate 1
(hence, $\xi_n$ is exponential of rate  $n$)\,, then
$E\hat\xi_n^{p}= \Gamma(p+1)$
so that both conditions hold provided
$\sup_nb_n^6 n^{-1}<\infty$\,.
If $\eta$ is exponential with $\mu$\,, an analogous condition
 works for part 5. The first condition in part 6 stands out in that it
 requires  $b_n$ not to grow too slowly as compared 
with $n$ whereas the rest of
 the conditions put constraints from above on the growth rate of
 $b_n$\,. Still, the conditions seem mutually compatible,
 e.g., in the exponential case one can take
 $b_n= n^\epsilon$\,, where $0<\epsilon< 1/6$\,.
It is also noteworthy that the second condition in part 6 implies
that $b_n/\sqrt{n}\to0$\,.
\end{remark}
\section{Trajectorial LD convergence}
\label{sec:ld-conv-skor}
This Section is concerned with the extension of Theorem 2.2 in Puhalskii
\cite{Puh23} to more general initial conditions. 
 Let
 $X_n=(X_n(t),t\in\R_+)$, where, as defined earlier,
\begin{equation}
    \label{eq:30}
X_n(t)=\frac{\sqrt{n}}{b_n}\bl(\frac{Q_n(t)}{n}-1\br)\,.
\end{equation}
The stochastic process $X_n$ is considered as a random element of
$\D(\R_+,\R)$\,,
 which 
 is endowed with a
 metric rendering it a  complete separable metric space, see Ethier and
 Kurtz \cite{EthKur86}, Jacod and Shiryaev \cite{jacshir}, for
 more detail.

Let \begin{equation}
  \label{eq:73}
  F^{(0)}(x)=\mu\int_0^x(1-F(y))\,dy\,.
\end{equation}
It is  the cdf  of the delay in an equilibrium renewal
 process with generic inter--arrival cdf $F$\,,
 see, 
Section 5.3 in Asmussen \cite{Asm03},
Section 10.3 in Grimmett and Stirzaker \cite{GriSti01}, or
Section 3.9 in Resnick \cite{Res02}
 for more detail.
  Introduce
\begin{equation}
  \label{eq:46}
  Y_n(t)=\frac{\sqrt{n}}{b_n}\,\bl(\frac{A_n(t)}{n}-\mu t\br)\,,
\end{equation}
\begin{align*}
    U_n(x,t)&=\frac{1}{b_n\sqrt{n}}\,\sum_{i=1}^{{{\hat A}_n}(t)}
\bl(\ind{\eta_i\le x}-F(x)\br)
\intertext{ and }
  \Theta_n(t)&=-
\int_{\R_+^2} \mathbf{1}_{\{x+s\le t\}}\,dU_n(x,s)\,.\notag
\end{align*}
As, owing to \eqref{eq:73}, 
$  1-F^{(0)}(t)+\mu t-\mu\int_0^t(t-s)dF(s)=1\,,
$
 \eqref{eq:107}, \eqref{eq:1a}, \eqref{eq:30}, and
\eqref{eq:46}  imply, after some algebra, that
  \begin{multline}
      \label{eq:67}
  X_n(t)=(1-F(t))X_n(0)^+-(1-F^{(0)}(t))X_n(0)^-+\Phi_n(t)\\+
\int_0^tX_n(t-s)^+\,dF(s)
+Y_n(t)-\int_0^t
Y_n(t-s)\,dF(s)+\Theta_n(t)
\,,
\end{multline}
where  $x^-=\max(-x,0)$ and 
\begin{equation}
  \label{eq:14}
\Phi_n(t)=  \frac{1}{b_n\sqrt n}\,\sum_{i=1}^{Q_n(0)\wedge
    n}(\ind{\eta_i^{(0)}> t}-(1-F^{(0)}(t)))\,.
\end{equation}
(Note that $\Phi_n(0)=0$ by the $\eta_i^{(0)}$ being positive.)
For the purpose of proving a steady-state LDP, the process 
$ \Phi_n=( \Phi_n(t),\,t\in\R_+)$ needs to be more general than in
\eqref{eq:14}, so, in the next theorem it is assumed that 
$\Phi_n$  is a stochastic process which has trajectories
from $\D(\R_+,\R)$ with
$\Phi_n(0)=0$\,.
The existence and uniqueness for $X_n(t)$ solving \eqref{eq:67}\,, 
given $X_n(0)$\,,
$\Phi_n(t)$\,, $Y_n(t)$ and $\Theta_n(t)$\,, follows from Puhalskii and
Reed \cite{PuhRee10}.
Let 
$Y_n=(Y_n(t),\,t\in\R_+)$\,. 
\begin{theorem}
  \label{the:heavy_traffic}
  \begin{enumerate}
  \item 
Suppose that $X_n$ solves \eqref{eq:67}. Suppose that
$X_n(0)$\,,    $\Phi_n$ and $Y_n$  jointly LD  converge in
 distribution   in $\R\times\D(\R_+,\R)^2$ at rate $b_n^2$\,,
  as $n\to\infty$, to 
$X(0)$\,, $\Phi$ and $Y$\,, respectively, where $X(0)\in\R$\,,
$\Phi=(\Phi(t)\,,
 t\in\R_+)$
and $Y=(Y(t)\,,t\in\R_+)$ 
are idempotent processes with  continuous trajectories.
Let the heavy traffic condition \eqref{eq:105}  hold. Let
\begin{equation}
  \label{eq:4}
   b_n^6n^{1/b_n^2-1}\to0\,, \text{ as } n\to\infty\,.
\end{equation}

Then   the sequence $X_n$
LD converges in distribution in $\D(\R_+,\R)$ at rate $b_n^2$ to 
    the idempotent process
$X=(X(t),t\in\R_+)$ that is the unique  solution to  the equation
\begin{multline}
  \label{eq:70}
         X(t)=(1-F(t))X(0)^+- (1-F^{(0)}(t))X(0)^{-}+\Phi(t)
+\int_0^tX(t-s)^+\,dF(s)
\\
+Y(t)-\int_0^t Y(t-s)\,dF(s)
+\int_0^t\int_{0}^t \ind{x+s\le t}\,\dot K(  F(x),\mu s)\,dF(x)\,\mu\,ds
\,,
 \end{multline}
where
 $K=(K(x,t),
(x,t)\in   [0,1]\times \R_+)$ is a  Kiefer idempotent process,  
$(X(0),\Phi)$,
$Y$\,,  and
$K$ being
independent.
Furthermore, the sequence $(X_n(0),\Phi_n,Y_n,X_n)$ LD converges in
$\R\times \D(\R_+,\R^3)$ to $(X(0),\Phi,Y,X)$\,.
\item Under the hypotheses  of Theorem
 \ref{the:inv},
 $   Y(t)=\sigma W(t)-\beta\mu t$\,, with $W=(W(t)\,,t\in\R_+)$ being a standard
 Wiener idempotent process that is independent of 
$X(0)$\,, $\Phi$ and $K$\,.  The equation \eqref{eq:70} takes the
form
\begin{multline*}
            X(t)=(1-F(t))X(0)^+ -(1-F^{(0)}(t))X(0)^-+\Phi(t)
-\beta F^{(0)}(t)\\
+\int_0^tX(t-s)^+\,dF(s)
+\sigma\int_0^t\bl(1-F(t-s)\br)\,\dot W(s)\,ds
\\+\int_0^t\int_{0}^t \ind{x+s\le t}\,\dot K(F(x),\mu s)\,dF(x)\,\mu \,ds
\,.
\end{multline*}
 \end{enumerate}
\end{theorem}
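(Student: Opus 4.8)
The plan is to derive the limiting idempotent equation \eqref{eq:70} via the weak-convergence-style methodology advertised in the introduction: establish exponential tightness of $X_n$ in $\D(\R_+,\R)$, extract an LD-convergent subsequence, and identify all LD limit points as solutions of \eqref{eq:70}, then invoke uniqueness. First I would treat the driving terms on the right of \eqref{eq:67}. By hypothesis $(X_n(0),\Phi_n,Y_n)$ LD converges jointly in $\R\times\D(\R_+,\R)^2$ to $(X(0),\Phi,Y)$, so the only genuinely new work concerns the term $\Theta_n(t)=-\int_{\R_+^2}\ind{x+s\le t}\,dU_n(x,s)$. The key step here is to show that $U_n$, suitably reparametrised, LD converges to a Kiefer idempotent process $K$ on $[0,1]\times\R_+$: one writes $U_n(x,t)$ in terms of the empirical distribution of $\{\eta_i\}$ evaluated along the service-entry counting process $\hat A_n(t)$, uses the fluid law $\hat A_n(t)/n\to\mu t$ (a consequence of \eqref{eq:40}, \eqref{eq:105}, and the already-established LD behaviour of $A_n$), and appeals to the moderate-deviation analogue of Donsker--Kiefer convergence of empirical processes — this is exactly the ingredient that forces the change of variables $(F(x),\mu s)$ appearing in \eqref{eq:70}. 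The time-change composition lemma for LD convergence (random time change by $\hat A_n$) then yields LD convergence of $\Theta_n$ to $\int_0^t\int_0^t\ind{x+s\le t}\dot K(F(x),\mu s)\,dF(x)\,\mu\,ds$, jointly with the other terms and independently of $(X(0),\Phi)$ and $Y$.

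Next I would handle the exponential tightness of $X_n$ itself. Since \eqref{eq:67} expresses $X_n$ as a fixed point of a contraction-type renewal map applied to the driving data (by the well-posedness result of Puhalskii--Reed \cite{PuhRee10}), exponential tightness of $X_n$ follows from that of the driving terms together with continuity/Lipschitz estimates for the solution map; the exponential estimates on $\Theta_n$ needed here are where the moment conditions enter, controlled through the renewal-process moment bounds of the appendix and the exponential Lenglart domination inequality from \cite{Puh01}, under the rate condition \eqref{eq:4}. Having exponential tightness, every subsequence of $X_n$ has a further LD-convergent subsequence with some idempotent limit $\tilde X$; passing to the LD limit in \eqref{eq:67} — legitimate because all terms LD converge jointly and the maps $q\mapsto\int_0^t q(t-s)^+\,dF(s)$ and $q\mapsto\int_0^t q(t-s)\,dF(s)$ are continuous on $\D(\R_+,\R)$ — shows $\tilde X$ satisfies \eqref{eq:70}. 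By the uniqueness of solutions to \eqref{eq:70} (again \cite{PuhRee10}, now for the idempotent/deterministic equation), $\tilde X=X$, so the whole sequence LD converges to $X$; the joint statement for $(X_n(0),\Phi_n,Y_n,X_n)$ follows because $X_n$ is a continuous functional of the data along the convergent subsequence.

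Part 2 is then immediate specialisation: under the hypotheses of Theorem \ref{the:inv}, assumption (2) there says $Y_n=(A_n(t)-\lambda_n t)/(b_n\sqrt n)$ LD converges to $\sigma W$ with $W$ a standard Wiener idempotent process, and the centring discrepancy between $\lambda_n t$ and $n\mu t$ contributes, via \eqref{eq:105}, the deterministic drift $-\beta\mu t$; substituting $Y(t)=\sigma W(t)-\beta\mu t$ into \eqref{eq:70} and using $\int_0^t(\sigma W(t-s)-\beta\mu(t-s))\,dF(s)$ together with the identity $1-F^{(0)}(t)+\mu t-\mu\int_0^t(t-s)\,dF(s)=1$ collapses the $Y$-terms to $-\beta F^{(0)}(t)+\sigma\int_0^t(1-F(t-s))\dot W(s)\,ds$, after an integration by parts that is valid because $W$ has absolutely continuous trajectories on the set where its idempotent probability is positive. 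The independence assertions are inherited from the joint LD convergence.

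\medskip
\noindent\textbf{Main obstacle.} I expect the crux to be the LD convergence $U_n\Rightarrow K$ to a Kiefer idempotent process \emph{jointly} with the random time change by $\hat A_n$, and the accompanying exponential tightness of $\Theta_n$ in $\D(\R_+,\R)$ uniformly enough to survive composition with $\hat A_n$; this is the step that genuinely uses the heavy-moment hypotheses and the renewal moment bounds, and where the argument of \cite{Puh23} has to be re-examined to confirm it still goes through for the more general $\Phi_n$ — fortunately, as the introduction notes, $\Phi_n$ enters \eqref{eq:67} only as an additive driving term, so its generality does not interact with the delicate empirical-process estimates and "virtually the same proof" applies.
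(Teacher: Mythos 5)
Your proposal matches the paper's intended argument: the paper defers to the proof of Theorem 2.2 in Puhalskii (2023), noting that the only modification needed is to replace the analysis of the initial-condition term $X_n^{(0)}$ (Lemma 3.3 there) with the already-hypothesised LD convergence of $\Phi_n$ to $\Phi$, which simplifies rather than complicates matters, and that condition \eqref{eq:4} comes from sharpening the final estimates in Lemma 3.1 there. Your outline — exponential tightness of $X_n$ via the driving terms and the solution map, LD convergence of $U_n$ to a Kiefer idempotent process under the random time change by $\hat A_n$ as the genuinely hard step, passage to the limit in \eqref{eq:67}, and uniqueness from Puhalskii--Reed — reproduces exactly that structure and correctly flags that $\Phi_n$'s additivity is what makes the generalisation essentially free.
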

 \begin{remark}
    In the special case where \eqref{eq:14} holds 
and     the random variables $\eta^{(0)}_i$ are
iid with cdf $F^{(0)}$\,,
$  \Phi(t)=W^0(F^{(0)}(t))$\,,
where 
$(W^0(x)\,,x\in[0,1])$ is a Brownian bridge idempotent process
and
  $X(0)$,
$W$, $W^0$ and
$K$ are
independent. 
 This is part 2 of Theorem 2.2 in Puhalskii \cite{Puh23}.
\end{remark}
The proof of Theorem \ref{the:heavy_traffic} procedes along similar lines as  the proof of Theorem 2.2 in Puhalskii
\cite{Puh23}, the difference being
the contents of Lemma 3.3 in Puhalskii \cite{Puh23}. 
The analogue of that lemma with $\Phi_n$ as $X_n^{(0)}$ and with $\Phi$ as
$X^{(0)}$ needs to be used. The proof of the new version is analogous
to and is
actually simpler than the proof in  Puhalskii \cite{Puh23} because the
LD convergence of $\Phi_n$ to $\Phi$ is built into  the hypotheses unlike
the LD convergence of $X_n^{(0)}$ to $X^{(0)}$ in  Puhalskii \cite{Puh23}
which had to be argued.
That the condition \eqref{eq:4} suffices follows from a more careful
analysis at the final stages of the proof of Lemma 3.1 in
Puhalskii \cite{Puh23}.
\section{Exponential tightness of the stationary number of customers}
\label{sec:expon-tightn-stat}

This section  is concerned with    exponential tightness
 of the distribution of $X^{(s)}_n$
under the hypotheses of Theorem \ref{the:inv}.
The random variable $X^{(s)}_n$ being the limit in distribution
of $X_n(t)$ for any initial condition, as $t\to\infty$\,,
the initial condition  in Theorem 3  
in Gamarnik and Goldberg \cite{GamGol13} is particularly useful.
It is recapitulated next.

Let $A'_n$ represent an equilibrium version of $A_n$\,.
It is a delayed renewal process 
with the same generic inter--arrival distribution as $A_n$\,,
whose epoch of the first renewal has the cdf
$(\lambda_n\int_0^xP(\xi_{n}>y)\,dy\,,x\in\R_+) $\,. 
Similarly, let
  $N'_1,\ldots,N'_n$ represent iid equilibrium
 renewal processes with inter--arrival 
cdf  $F$\,. Suppose that the processes $A'_n$ and $N'_1\,,\ldots,N'_n$
are independent. 
Suppose that arrivals occur according to $A'_n$\,,
 that initially there are $n$ customers in service, whose
  residual service times are mutually independent as well as independent of
   arrivals and are
distributed according to $F^{(0)}$\,, 
and that  there is no initial queue,  i.e., $Q_n(0)=n$\,.
Let $Q'_n(t)$ denote the  number of customers present at $t$ under
these hypotheses and let 
\begin{equation}
  \label{eq:131}
  X'_n(t)=\frac{\sqrt n}{b_n}\bl(\frac{Q'_n(t)}{n}-1\br)\,.
\end{equation}
Similarly, the use of primes for related
 random variables will signify that  this special setup is assumed.

Denote, for  random variable $\zeta$\,,
$\norm{\zeta}_{n}=(E\abs{\zeta}^{b_n^2})^{1/b_n^2}$
and $\norm{\zeta}_{n,\theta}=(E\abs{\zeta}^{\theta
  b_n^2})^{1/(\theta b_n^2)}$\,, where
$\theta>0$\,. 
The next result is the centrepiece of this section.
 \begin{theorem}\label{the:stat_exp_tight}
Under the hypotheses of Theorem \ref{the:inv}, for some $\theta>0$\,,
\begin{equation}
  \label{eq:134}
  \limsup_{n\to\infty}\sup_{t\in\R_+}\norm{X'_n(t)}_{n,\theta}<\infty\,.
\end{equation}
 \end{theorem}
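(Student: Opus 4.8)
The plan is to bound the queue-length process $Q'_n(t)$ above and below by processes whose moments are controllable uniformly in $t$, and thereby derive a uniform bound on the $\theta b_n^2$-moments of $X'_n(t)$.

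First I would establish the upper bound. In the setup of Gamarnik and Goldberg \cite{GamGol13}, $Q'_n(t) = n + (\text{number in queue at } t) \le n + Q''_n(t)$, where $Q''_n(t)$ is the number in an associated $GI/GI/\infty$-type majorant, or more precisely the stationary workload-type upper bound furnished by Theorem 3 there: the number waiting is dominated (stochastically, or in a suitable pathwise sense after coupling) by the stationary queue of a single-server-like system fed by the equilibrium renewal input $A'_n$ thinned against the $n$ renewal processes $N'_1,\dots,N'_n$. Concretely, one writes $(Q'_n(t)-n)^+$ in terms of a one-dimensional reflected process and uses the explicit stationarity of the $A'_n$ and $N'_i$ to get, for $p = \theta b_n^2$, a bound
\[
E\bl(Q'_n(t)-n\br)^{+p} \le C^p\, E\Bl(\sup_{0\le s\le t}\bl(A'_n(s) - \textstyle\sum_{i=1}^n N'_i(s)\br)^+\Br)^p\,,
\]
and then control the right-hand side by the higher-order renewal moment bounds of Appendix \ref{sec:higher} together with the moment hypotheses on $\xi_n$ (part 4 of Theorem \ref{the:inv}) and on $\eta$ (part 5). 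The centring $\lambda_n \le n\mu$ from \eqref{eq:105} makes the drift of $A'_n(s) - \sum_i N'_i(s)$ nonpositive up to a $b_n\sqrt n$ correction, which is exactly the scaling in the definition \eqref{eq:131} of $X'_n$; so after dividing by $(b_n\sqrt n)^p$ the bound becomes $O(1)$ uniformly in $t$, provided the growth conditions in parts 4 and 6 are in force.

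For the lower bound, $Q'_n(t) \ge Q'^{(\infty)}_n(t)$, where $Q'^{(\infty)}_n(t)$ is the number in the corresponding infinite-server queue with the same equilibrium arrivals $A'_n$, the same $n$ initial customers with residual times $\sim F^{(0)}$, and service times $\sim F$ — i.e. we just ignore the queueing and let everyone start service at once. In the infinite-server system with equilibrium initial condition, $Q'^{(\infty)}_n(t)$ is a sum of indicators: the $n$ initial customers still present, plus the Poisson-like thinning of past arrivals still in service, and by the equilibrium property its distribution is (close to) stationary with mean $\approx n$. Thus $n - Q'^{(\infty)}_n(t)$ is a centred sum whose $p$-th moment is bounded via the renewal moment bounds and the independence built into the construction; dividing by $(b_n\sqrt n)^p$ again gives an $O(1)$ contribution, and $X'_n(t)^- \le (n - Q'^{(\infty)}_n(t))^+/(b_n\sqrt n)$ in the relevant range.

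**The main obstacle** will be the upper-bound step: getting the $p$-th moment of the reflected/supremum process $\sup_{s\le t}(A'_n(s)-\sum_i N'_i(s))^+$ under control \emph{uniformly in $t$} with $p = \theta b_n^2 \to \infty$. A naive union bound over $s$ loses a factor growing in $t$, so one must exploit negative drift: decompose $[0,t]$ into blocks, use that on each block the increment of $A'_n - \sum_i N'_i$ has negative mean of order $b_n\sqrt n$, and sum a geometric-type series — this is where the \emph{exponential} version of Lenglart's domination inequality from Puhalskii \cite{Puh01}, already invoked in \cite{Puh23}, enters, converting the supremum bound into a bound on the terminal value plus a martingale-type remainder. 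The constant $\theta < 1$ is chosen precisely so that the renewal moment bounds of Appendix \ref{sec:higher}, which give $(E\,N'_i(s)^{p})^{1/p} \lesssim$ (something like $p\cdot s + p^2$), can be combined via Hölder/Rosenthal-type inequalities across the $n$ independent processes $N'_i$ without the combinatorial constants blowing up faster than $(b_n\sqrt n)^p$; checking that the exponents balance — using $\sup_n n^{1/b_n^2}<\infty$ and $b_n^6 n^{-1}\to 0$ from part 6 — is the delicate bookkeeping that occupies most of the real work.
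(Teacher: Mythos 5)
Your plan coincides with the paper's: the positive part $X'_n(t)^+$ is bounded using Theorem~3 of Gamarnik and Goldberg \cite{GamGol13} (exactly the domination by $\sup_{s\le t}(A'_n(s)-\sum_{k=1}^n N'_k(s))$), and the negative part by comparison with the associated infinite--server system; both are then controlled by the higher--order renewal moment bounds of Appendix~\ref{sec:higher} and Petrov's inequality. One correction to the mechanics: the uniform-in-$t$ control of the supremum is \emph{not} done via the exponential Lenglart inequality (that tool is used later, in Section~\ref{sec:residual}); instead the paper evaluates the running sup at arrival epochs, reducing it to $\sup_{i}(X_{n,i}+Y_{n,i}-\tfrac{i}{n}\tfrac{\beta_n}{\rho_n})$, and then applies Prohorov's dyadic-block device \eqref{eq:87} with Markov's inequality on $b_n^2$-th moments, which is the ``negative drift beats the fluctuation, geometric sum'' step you sketched. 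Also, the role of $\theta<1$ is more elementary than Rosenthal bookkeeping: one obtains a tail bound $P(X'_n(t)>r)^{1/b_n^2}\le \overline C r^{-1/3}$ and then needs $\theta<1/3$ simply so that $\int_0^\infty \theta b_n^2 r^{\theta b_n^2-1}P(X'_n(t)>r)\,dr$ converges.
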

 \begin{remark}
   \label{re:stat_exp}
Clearly, under \eqref{eq:134}, 
\begin{equation}
  \label{eq:23}
  \limsup_{n\to\infty}\norm{X_n^{(s)} }_{n,\theta}<\infty\,,
\end{equation}
implying that
  the sequence of the distributions of $ X^{(s)}_{n}$ is exponentially
   tight of order $b_n^2$ in $\R$\,.
 \end{remark}
The proof of Theorem \ref{the:stat_exp_tight} examines separately 
$\norm{X'_n(t)^- }_{n}$
 and 
$\norm{X'_n(t)^+}_{n,\theta}$\,.
The next lemma addresses the easy  part.
\begin{lemma}
  \label{le:negative part}
Under the hypotheses of Theorem \ref{the:inv}, 
\[
    \limsup_{n\to\infty}\sup_{t\in\R_+}\norm{X_n'(t)^- }_{n}<\infty\,.
\]
\end{lemma}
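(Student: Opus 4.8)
The plan is to bound $X_n'(t)^-$ from above by comparing the $M$-server system with an associated infinite-server queue, for which the number in system admits an explicit representation as a sum of indicators. Recall that $X_n'(t)^- = (\sqrt n/b_n)(1 - Q_n'(t)/n)^+$, so controlling $\norm{X_n'(t)^-}_n$ amounts to bounding, in $\mathbb L^{b_n^2}$, the deficit $(n - Q_n'(t))^+/(b_n\sqrt n)$. The key observation is that in the special setup (arrivals $A_n'$ in equilibrium, $n$ customers initially in service with iid $F^{(0)}$ residuals, no initial queue), every customer is always in service: there is no queueing, since we never exceed $n$ busy servers once we start at $n$ and the system is underloaded in the moderate-deviation sense. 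Hence $Q_n'(t)$ equals the number in an $\infty$-server queue with the same arrival stream and service distribution, and we can write $n - Q_n'(t) = \sum_{i=1}^n \ind{\eta_i^{(0)} \le t} - \sum_{j=1}^{A_n'(t)}\ind{\eta_j + \tau_{n,j}' \le t\ \text{but customer still counted}}$ — more precisely, $Q_n'(t) = \sum_{i=1}^n \ind{\eta_i^{(0)} > t} + \sum_{j=1}^{A_n'(t)}\ind{\tau_{n,j}' + \eta_j > t}$, so that
\[
n - Q_n'(t) = \sum_{i=1}^n \ind{\eta_i^{(0)} \le t} - \sum_{j=1}^{A_n'(t)}\ind{\tau_{n,j}' + \eta_j > t}\,,
\]
and therefore $(n - Q_n'(t))^+ \le \sum_{i=1}^n \ind{\eta_i^{(0)} \le t}$, a sum of $n$ independent Bernoulli$(F^{(0)}(t))$ variables.

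From here I would proceed as follows. First, recentre: $\sum_{i=1}^n(\ind{\eta_i^{(0)}\le t} - F^{(0)}(t))$ is a sum of $n$ iid centred bounded random variables, and $n F^{(0)}(t) \le n$, so $X_n'(t)^- \le (\sqrt n/b_n)\bigl( (1/n)\sum_{i=1}^n(\ind{\eta_i^{(0)}\le t} - F^{(0)}(t)) \bigr)^+ \cdot \ind{\cdots} + (\sqrt n/b_n)(1 - \dots)$; the deterministic part $(\sqrt n/b_n)(1 - F^{(0)}(t)) \ge 0$ causes no trouble for the \emph{negative} part because when $F^{(0)}(t)$ is bounded away from $1$ we need the random fluctuation to drop $Q_n'(t)$ below $n$, which costs order $\sqrt n$ in the centred sum, i.e.\ order $b_n$ after normalisation. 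Concretely, $X_n'(t)^- \le (1/(b_n\sqrt n))\bigl(\sum_{i=1}^n(F^{(0)}(t) - \ind{\eta_i^{(0)}\le t})\bigr)^+$. Then I apply a moment bound for sums of independent centred bounded variables with $L^{b_n^2}$ norm: by Rosenthal's inequality (or Marcinkiewicz--Zygmund together with the fact that $|\ind{\eta_i^{(0)}\le t} - F^{(0)}(t)| \le 1$), there is a universal-constant bound of the form $\norm{\sum_{i=1}^n(F^{(0)}(t)-\ind{\eta_i^{(0)}\le t})}_{b_n^2} \le C\, b_n^2 \sqrt n$ up to lower-order terms — the relevant scaling is that for a sum of $n$ iid bounded centred variables the $p$-th norm is $O(\sqrt{np} + p)$. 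Dividing by $b_n\sqrt n$ yields $\norm{X_n'(t)^-}_n \le C(b_n + b_n/\sqrt n)$, which is \emph{not} bounded; so a cruder bound is insufficient and one must exploit the subtraction of the arrival term.

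The refinement is that for $t$ not too large the arrival-and-service term $\sum_{j=1}^{A_n'(t)}\ind{\tau_{n,j}'+\eta_j>t}$ has mean $\approx \lambda_n \int_0^t (1-F(t-s))\,ds = n\mu\int_0^t(1-F(u))\,du = n F^{(0)}(t)$ by the equilibrium property of $A_n'$ and Wald-type identity, so $E(n - Q_n'(t)) \to 0$ and, more importantly, the variance of $n - Q_n'(t)$ is $O(n)$ with the $\ind{\eta_i^{(0)}\le t}$ and arrival contributions both contributing; taking $L^{b_n^2}$ norms of the difference, using independence of the initial-residual sum from the arrival process and again Rosenthal, gives $\norm{(n-Q_n'(t))^+}_n \le C(b_n^2\sqrt n + b_n^4)$. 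Then $\norm{X_n'(t)^-}_n \le C(b_n/\!\cdot) $ — still too big. The honest resolution, and the step I expect to be the main obstacle, is that one cannot get a time-uniform $O(1)$ bound on $\norm{X_n'(t)^-}_n$ from second-moment-type reasoning alone; instead one uses the explicit Bernoulli structure to get an exponential moment bound. Since $(n - Q_n'(t))^+ \le \mathrm{Bin}(n, F^{(0)}(t))$ stochastically and $n F^{(0)}(t) \le n$, a Chernoff bound gives, for $\lambda>0$, $E\exp(\lambda (n-Q_n'(t))^+) \le \exp(n(e^\lambda - 1))$; choosing $\lambda = c\, b_n/\sqrt n$ and using $b_n/\sqrt n \to 0$, one obtains $E\exp(c\, b_n/\sqrt n\,(n-Q_n'(t))^+) \le \exp(C b_n^2)$, which says exactly that $(n-Q_n'(t))^+/(b_n\sqrt n)$ is exponentially tight of order $b_n^2$, uniformly in $t$. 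Translating this exponential bound into the $\norm{\cdot}_{n}$ statement via $E|Z|^{b_n^2} \le (b_n^2/(e\lambda))^{b_n^2} E e^{\lambda|Z|}$ (the elementary inequality $x^p \le (p/(e\lambda))^p e^{\lambda x}$) with $\lambda = c b_n/\sqrt n$ and $p = b_n^2$ yields $\norm{(n-Q_n'(t))^+}_n \le (b_n^2/(e c b_n/\sqrt n)) (e^{Cb_n^2})^{1/b_n^2} = (\sqrt n\, b_n/(ec))\, e^{C}$, hence $\norm{X_n'(t)^-}_n \le e^C/(ec) \cdot (1)$ — a genuine $O(1)$ bound, uniform in $t\in\R_+$. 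Taking $\limsup_{n\to\infty}\sup_t$ gives the claim.

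In carrying this out, the points requiring care are: (i) justifying rigorously that in the special setup there is never a queue, so that the infinite-server comparison is an identity rather than merely an inequality — this follows because $Q_n'(0)=n$, arrivals enter service immediately as long as a server is free, and the stochastic-monotonicity argument shows $Q_n'(t)$ stays comparable to the $\infty$-server value (alternatively one just uses $Q_n'(t) \ge$ ($\infty$-server number in system starting from $n$ in service, which is what is written above) as a lower bound, which is all that is needed for the \emph{negative} part); (ii) the elementary moment-from-exponential inequality and the choice of $\lambda$ scaling like $b_n/\sqrt n$, which is where the hypothesis $b_n/\sqrt n \to 0$ (second condition in part 6, cf.\ \eqref{eq:4}) enters; and (iii) uniformity in $t$, which is immediate because the Chernoff bound $\exp(n(e^\lambda-1))$ does not depend on $t$ at all (we used the crude domination $(n-Q_n'(t))^+ \preceq \mathrm{Bin}(n,F^{(0)}(t)) \preceq \mathrm{Bin}(n,1)$). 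No higher-order renewal moment bounds from Appendix \ref{sec:higher} are needed here — that is why this is the ``easy part'' — since the negative part is controlled purely by the bounded initial-residual indicators.
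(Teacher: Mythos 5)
Your proposal has a genuine error at its crux, and the resulting proof does not go through. The comparison with an infinite-server system is indeed the paper's starting point (so the first step is on target), and the observation that one can drop the arrival-and-departure contribution and bound $(n-Q_n'(t))^+\le\sum_{i=1}^n\ind{\eta_i'^{(0)}\le t}\preceq\mathrm{Bin}(n,F^{(0)}(t))$ is valid as an inequality. But this is far too crude: the binomial has mean $nF^{(0)}(t)$, of order $n$, whereas the quantity you need to control is of order $b_n\sqrt{n}$. Your Chernoff computation does not repair this. With $\lambda=cb_n/\sqrt n$, the bound $E\exp(\lambda\,\mathrm{Bin}(n,p))\le\exp(n(e^\lambda-1))$ gives an exponent
\[
n\bigl(e^{cb_n/\sqrt n}-1\bigr)\ \ge\ n\cdot\frac{cb_n}{\sqrt n}=c\,\sqrt n\,b_n\,,
\]
which grows like $\sqrt n\,b_n$, not like $b_n^2$ (since $b_n/\sqrt n\to0$). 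So the claimed estimate $E\exp\bigl(cb_n/\sqrt n\,(n-Q_n'(t))^+\bigr)\le\exp(Cb_n^2)$ is false, and the back-translation to $\norm{\cdot}_n$ produces a factor of roughly $e^{C\sqrt n/b_n}$, which diverges. A Chernoff/Hoeffding argument with $\lambda\asymp b_n/\sqrt n$ only yields an $O(b_n^2)$ exponent when applied to the \emph{centred} sum $\sum_i(\ind{\eta_i'^{(0)}\le t}-F^{(0)}(t))$ (whose mean is zero and variance $O(n)$), but $(n-Q_n'(t))^+$ is not dominated by the positive part of that centred sum: the cancelling term $\sum_{j\le A_n'(t)}\ind{\tau_{n,j}'+\eta_j>t}$ is random with mean $\approx nF^{(0)}(t)$ and $O(\sqrt n)$ fluctuations, and its fluctuation cannot be discarded. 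Dropping it leaves the uncentred binomial with an $O(n)$ mean, which is precisely what breaks the exponent.

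This is exactly why the arrival/service machinery cannot be sidestepped, contrary to your closing claim that no higher-order renewal moment bounds are needed. The paper's proof keeps all three pieces of $\breve X_n = \Phi_n' + H_n' - \Theta_n'$ (where $\Phi_n'$ is the centred initial-residual sum, $H_n'$ the compensated arrival contribution, and $\Theta_n'$ a martingale-type service-indicator term) and bounds each separately uniformly in $t$; the treatment of $H_n'$ relies on Lemma \ref{le:stat_moments} and the treatment of $\Theta_n'$ on Lemma \ref{le:renewal_moments}, i.e.\ precisely on the Appendix \ref{sec:higher} moment bounds for renewal processes with explicit dependence on the moment order $b_n^2$. (Two smaller remarks: your claim that ``there is never a queue'' in the special setup is incorrect — the system is near-critically loaded and queueing certainly occurs; the infinite-server process is only a lower bound on $Q_n'$, which you do correctly note is enough. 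And the Petrov-type estimate for $\Phi_n'$ exploits the small $b_n^2$-th central moment of the Bernoulli indicator; a generic Rosenthal bound $O(b_n^2\sqrt n)$ would be lossy and, as you observed, leaves an unwanted extra factor $b_n$.)
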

\begin{proof}
  
  The proof  uses a lower bound on $Q_n'(t)$ that
is the number--in--the--system in the associated infinite server queue.
Let
 $\tau'_{n,i}$ represent the $i$th arrival epoch for
$A'_n$\,, i.e., in analogy with \eqref{eq:24},
\[
  \tau'_{n,i}=\inf\{t:\, A'_n(t)\ge i\}\,.
\]
By \eqref{eq:40} and the fact that $Q_n'(0)=n$\,,
$A'_n(t)\ge \hat A'_n(t)$ and   $\hat\tau'_{n,i}\ge
\tau'_{n,i}$\,, for $i\ge 1$\,, so, by \eqref{eq:107} and  \eqref{eq:79},
\[
      Q'_n(t)
\ge  Q_n'^{(0)}(t)+
A'_n(t)-\sum_{i=1}^{
A'_n(t)}\ind{\tau'_{n,i}+\eta_i\le t}\,.\]
Let
\begin{equation}
  \label{eq:89}
  \breve Q_n(t)=Q_n'^{(0)}(t)+A'_n(t)
-\int_0^t\int_0^t \mathbf{1}_{\{x+s\le t\}}
\,d\,\sum_{i=1}^{ A'_n(s)}\mathbf{1}_{\{\eta_i\le x\}}
\end{equation}
and \[   \breve X_n(t)=\frac{\sqrt{n}}{b_n}\bl(\frac{\breve Q_n(t)}{n}-1\br)\,.
\]
It suffices to prove that  \begin{equation}
  \label{eq:113}
  \limsup_{n\to\infty}\sup_{t\in\R_+}
\,\norm{\breve X_n(t)}_n <\infty\,.
\end{equation}
An algebraic manipulation of \eqref{eq:89} that is analogous to the
derivation of \eqref{eq:67}
yields
\[  \breve X_n(t)   =
\Phi'_n(t)+
H'_n(t)-\Theta'_n(t)\,,
\]
where
\begin{equation}
  \label{eq:94}
  \Phi'_n(t)=\frac{1}{b_n\sqrt n}\,\sum_{i=1}^{
    n}(\ind{\eta_i'^{(0)}> t}-(1-F^{(0)}(t)))\,,
\end{equation}
\begin{equation}
  \label{eq:90}
    H'_n(t)=Y_n'(t)-\int_0^t
Y'_n(t-s)\,dF(s)\,,
\end{equation}
\begin{equation}
  \label{eq:90a} Y_n'(t)
=\frac{\sqrt{n}}{b_n}\,\bl(\frac{A'_n(t)}{n}-\mu t\br)\,,
\end{equation}
\begin{equation}
    \label{eq:144}
     \Theta'_n(t)=
\int_{\R_+^2} \mathbf{1}_{\{x+s\le t\}}\,d U'_n(x,s)
\end{equation}
and
\begin{equation}
  \label{eq:143}
      U'_n(x,t)=\frac{1}{b_n\sqrt{n}}\,\sum_{i=1}^{{{ A}'_n}(t)}
\bl(\ind{\eta_i\le x}-F(x)\br)\,.
\end{equation}
The inequality  \eqref{eq:113} is proved by proving that

\begin{equation}
  \label{eq:98}
\limsup_{n\to\infty}\sup_{t\in\R_+}
\norm{\Phi'_n(t)}_n<\infty\,,
   \end{equation}
\begin{equation}
  \label{eq:153}
      \limsup_{n\to\infty}\sup_{t\in\R_+}
\norm{H'_n(t)}_n <\infty
\end{equation}
and that
\begin{equation}
  \label{eq:145}
    \limsup_{n\to\infty}\sup_{t\in\R_+}
\norm{\Theta'_n(t)}_n<\infty\,.
\end{equation}
\begin{proof}[Proof of \eqref{eq:98}]
By \eqref{eq:94}, applying
 the bound  (5.6) in the proof of Theorem 19, Chapter
III, \S5 in Petrov \cite{Pet87} \footnote{More specifically, the following
  bound is used. Suppose $X_1,\ldots,X_n$ are independent with $EX_k=0$. 
Then, provided $p\ge2$\,,
\[
  E\abs{\sum_{k=1}^nX_k}^p\le (\frac{p}{2}+1)^p\sum_{k=1}^nE\abs{X_k}^p
+p(\frac{p}{2}+1)^{p/2}e^{p/2+1}(\sum_{k=1}^nEX_k^2)^{p/2}.
\]
},  see also Whittle \cite{Whi60} for similar results,
\begin{multline*}
  E\abs{\Phi'_n(t)}^{b_n^2}= \frac{1}{(b_n\sqrt{n})^{b_n^2}}\,
E\abs{\sum_{i=1}^n(
\ind{\eta'^{(0)}_i\le t}-F^{(0)}(t))}^{b_n^2}\\
\le\frac{1}{(b_n\sqrt{n})^{b_n^2}}\,
\bl((\frac{b_n^2}{2}+1)^{b_n^2}n(F^{(0)}(t)(1-F^{(0)}(t))^{b_n^2}
+(1-F^{(0)}(t))F^{(0)}(t)^{b_n^2})\\+
b_n^2(\frac{b_n^2}{2}+1)^{b_n^2/2}e^{b_n^2/2+1}n^{b_n^2/2}
\bl(F^{(0)}(t)(1-F^{(0)}(t))\br)^{b_n^2/2}\,.
\end{multline*}
Hence,
\[
  \norm{\Phi'_n(t)}_n\le
\frac{1}{b_n\sqrt{n}}\,
\bl((b_n^2+1)n^{1/b_n^2}\\+
b_n^{2/b_n^2}(b_n+1)e^{1/2+1/b_n^2}n^{1/2}\br)\le
\hat C (b_nn^{1/b_n^2-1/2}+1)\,,
\]
for some $\hat C>0$\,, which implies \eqref{eq:98}
due to part 6 of Theorem \ref{the:inv}.
\end{proof}
\begin{proof}[Proof of \eqref{eq:153}]
Introduce
\begin{equation}
  \label{eq:16}
        \overline A'_n(s)=A_n'(s)-\lambda_ns\,.
\end{equation}
    By \eqref{eq:90} and \eqref{eq:90a}, after some algebra,
\begin{multline}
  \label{eq:158}
H'_n(t)
=\frac{\sqrt{n}}{b_n}\,\,
\int_0^t
\frac{\overline A_n'(t)-\overline A_n'(t-s)}{n}
\,dF(s)
+
\frac{\sqrt{n}}{b_n}\,\frac{\overline A_n'(t)}{n}(1-F(t))\\
+\frac{\sqrt{n}}{b_n}\,(\frac{\lambda_n}{n}-\mu)\bl(\int_0^ts\,dF(s)
+t(1-F(t))\br)\,.
\end{multline}
The terms on the latter righthand side are dealt with in order.
By $A_n'$ having  stationary increments, 
$  \int_0^t
(\overline A_n'(t)-\overline A_n'(t-s))\,dF(s)
$ is distributed as
$  \int_0^t
\overline A_n'(s)\,dF(s)\,.$
By Jensen's inequality,
\[
\norm{\int_0^t
\abs{\overline A_n'(s)}\,dF(s)}_n \le
\int_0^t\norm{
\overline A_n'(s)}_n 
\,dF(s)
\,.
\]
By Lemma \ref{le:stat_moments} (with $p=b_n^2$), for some $\tilde C>0$\,,
\begin{multline}
  \label{eq:19}
  \norm{\overline A_n'(s)}_n\le
\tilde C\bl((1+(\lambda_ns)^{1/b_n^2})
(E(\lambda_n\xi_n)^{b_n^2+1})^{1/b_n^2}\\+
(1+(\lambda_ns)^{1/2+1/b_n^2})b_n^{1+2/b_n^2}
(E(\lambda_n\xi_n)^2)^{3/2}\br)\,.
\end{multline}
Hence,
\begin{multline}
  \label{eq:13}
    \int_0^t
\frac{\norm{\overline A_n'(s)}_n}{n}\,dF( s) \le
\frac{\tilde C}{n}\bl(
(E(\lambda_n\xi_n)^{b_n^2+1})^{1/b_n^2}
\int_0^\infty (1+(\lambda_ns)^{1/b_n^2})\,dF(s)\\+
b_n^{1+2/b_n^2}
(E(\lambda_n\xi_n)^2)^{3/2}\int_0^\infty 
(1+(\lambda_ns)^{1/2+1/b_n^2})\,dF(s)\br)\,.
\end{multline}
Since, under the hypotheses, $\lambda_n/n\to\mu$\,,  $b_n^2/n\to0$\,,
$\sup_n(\lambda_n^2E\xi_n^2)<\infty$\,,
 $\sup_nn^{1/b_n^2}<\infty$\,,
$\sup_n n^{1/2}/b_n
(E\xi_n^{b_n^2+1})^{1/b_n^2}<\infty$\,, and
$\int_0^\infty s\,dF(s)<\infty$\,, \eqref{eq:13} implies that
\begin{equation}
  \label{eq:128}
  \limsup_{n\to\infty}\frac{\sqrt{n}}{b_n}\,
\int_0^\infty
\frac{\norm{\overline A_n'(s)}_n}{n}\,dF(s) <\infty\,.
\end{equation}
Also by \eqref{eq:19},   noting
 that $(1-F(t))t\to0$\,, as $t\to\infty$\,,
\[
\limsup_{n\to\infty}
\sup_{t\in\R_+}
 \frac{\norm{\overline A_n'(t)}_n}{n}
\,(1-F(t)) <\infty\,.
\]
Finally, the absolute value of
the last term on the righthand side of \eqref{eq:158} is bounded in
$n$ and $t$ by the heavy traffic condition \eqref{eq:105} and
by $\int_0^\infty
s\,dF(s)$ being finite.
Substituting the bounds in \eqref{eq:158} implies \eqref{eq:153}.

\end{proof}
 \begin{proof}[Proof of \eqref{eq:145}]
    By  \eqref{eq:144} and \eqref{eq:143},
\[
  \Theta'_n(t)=\frac{1}{b_n\sqrt{n}}\,\sum_{i=1}^{A'_n(t)}
(\ind{\eta_i\le t-\tau_{n,i}'}-F(t-\tau'_{n,i}))\,.
\]
Let $\mathcal{F}^{A'_n}$ denote the $\sigma$--algebra generated by
the process $A'_n$\,. By the arrivals and
service being independent,
\[
  E(\abs{\Theta'_n(t)}^{b_n^2}|\mathcal{F}^{A'_n})=
\Bl(\frac{1}{b_n\sqrt{n}}\,\Br)^{b_n^2}E\abs{\sum_{i=1}^N
(\ind{\eta_i\le t- y_i}-F(t-y_i))}^{b_n^2}
\large|_{N=A'_n(t), y_i=\tau_{n,i}'}\,.
\]
Similarly to the proof of \eqref{eq:98}, 
by Petrov's bound  for not necessarily identically distributed
$X_i$\,, 
see  \cite{Pet87},
\begin{multline*}
  E\abs{\sum_{i=1}^N
(\ind{\eta_i\le t- y_i}-F(t-y_i))}^{b_n^2}
\le \bl(\frac{b_n^2}{2}+1\br)^{b_n^2}\sum_{i=1}^N
u_n(t-y_i)
\\+b_n^2\,\bl(\frac{b_n^2}{2}+1\br)^{b_n^2/2}e^{b_n^2/2+1}
\bl(\sum_{i=1}^{N}v(t-y_i)\br)^{b_n^2/2}\,,
\end{multline*}
where \begin{align}
u_n(y)=  E\abs{\ind{\eta\le y}-F(y)}^{b_n^2}
=F(y)(1-F(y))^{b_n^2}+(1-F(y))F(y)^{b_n^2}\notag
\intertext{ and }
  \label{eq:150}
  v(y)=E(\ind{\eta\le y}-F(y))^{2}=F(y)(1-F(y))
\end{align}
so that\begin{multline}
  \label{eq:151}
    E\abs{\Theta'_n(t)}^{b_n^2}\le
\frac{(b^2_n+2)^{b_n^2}}{2^{b_n^2}b_n^{b_n^2}n^{b_n^2/2}}
E\int_0^tu_n(t-s)\,dA'_n(s)\\
+\frac{b_n^2(b_n^2+2)^{b_n^2/2}e^{b_n^2/2+1}}{b_n^{b_n^2}2^{b_n^2/2}}\,E
\bl(\frac{1}{n}\,\int_0^tv( t- s)\,dA'_n(s)\br)^{b_n^2/2}\,.
\end{multline}
Since $A'_n$ is of rate $\lambda_n$ and has stationary increments,
\begin{equation}
  \label{eq:149}
E\int_0^tu_n(t-s)\,dA'_n(s)
=
\int_0^tu_n(t-s)\,dEA'_n(s)
=\int_0^\infty u_n(s)\,\lambda_n\,ds\,.
\end{equation}
The second term on the righthand side of
\eqref{eq:151} is handled next.
By \eqref{eq:150},
\begin{equation}
  \label{eq:165}
   \int_0^tv( t- s)\,dA'_n(s)\le \int_0^t(1-F( t- s))\,dA'_n(s)\,.
\end{equation}
By  $A'_n$ having stationary increments, assuming it has been extended
to the whole real line,
the latter integral is distributed  as
$ \int_0^t(1-F( t- s))\,d_sA'_n(s-t)$\,, where $d_s$ signifies that
variable $s$ is being incremented.
A change of  variables yields
\begin{equation}
  \label{eq:166}
  \int_0^t(1-F( t-
  s))\,d_sA'_n(s-t)
=\int_{0}^t(1-F(s))\,(-dA'_n(-s))\,.
\end{equation}
The process $(-A'_n(-s)\,,s\in\R_+)$ is distributed as the process
$(A'_n(s)\,,s\in\R_+)$ because both are equilibrium renewal
processes with the same generic inter--arrival  distribution.
Hence, the righthand integral in \eqref{eq:166} is distributed as
  $\int_{0}^t(1-F(s))\,dA'_n(s)$\,.
By integration by parts,
\[
  \int_{0}^t(1-F(s))\,dA'_n(s)=(1-F(t))A'_n(t)+
\int_{0}^tA'_n(s)\,dF(s)\,.
\]
Consequently, owing to \eqref{eq:165} and \eqref{eq:166}, by Jensen's
inequality, for $n$ great,
\[
  E
\bl(\frac{1}{n}\,\int_0^tv( t- s)\,dA'_n(s)\br)^{b_n^2}\le
\frac{2^{b_n^2-1}}{n^{b_n^2}}\Bl((1-F(t))^{b_n^2}EA'_n(t)^{b_n^2}+
\,E(\int_{0}^tA'_n(s)\,dF(s))^{b_n^2}\Br)\,.
\]
It follows that
\begin{equation}
  \label{eq:172}
\norm{\frac{1}{n}\,\int_0^tv( t-
s)\,dA'_n(s)}_n\le
\frac{2}{n}\,(1-F(t))\norm{A'_n(t)}_n+
\frac{2}{n}\,\norm{\int_{0}^tA'_n(s)\,dF(s)}_n\,.
\end{equation}
By Jensen's inequality,
\begin{equation}
  \label{eq:164}
\norm{\int_{0}^tA'_n(s)\,dF(s)}_n\le\int_{0}^t\norm{A'_n(s)}_n \,dF(s)\,.
\end{equation}
 Lemma \ref{le:renewal_moments}, as well as
the facts that $\lambda_n/n\to\mu$\,, that $b_n^2/n\to0$ and
that
$\sup_nn^2E\xi_n^2<\infty$\,, yield the existence of $C>0$ such that,
 for  all $t$
and all $n$ great enough,
\begin{equation}
  \label{eq:130}
  \norm{A'_n(t)}_n\le
Cn(1+t)\,,
\end{equation}
which relation implies \eqref{eq:145} when combined with \eqref{eq:151},
 \eqref{eq:149},  \eqref{eq:172}, and \eqref{eq:164},  on recalling
 that $\int_0^\infty s\,dF(s)<\infty$\,.
\end{proof}\end{proof}

Next comes the hard part.
\begin{lemma}
  \label{le:norm_bound}
Under the hypotheses of Theorem \ref{the:inv}, for some $\theta>0$\,,
\begin{equation}
  \label{eq:15}
  \limsup_{n\to\infty}\sup_{t\in\R_+}\norm{X_n'(t)^+ }_{n,\theta}<\infty\,.
\end{equation}
\end{lemma}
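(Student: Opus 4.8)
The plan is to bound the positive part $X'_n(t)^+$ by a ``free process'' with a large negative drift and then to deduce the asserted moment bound from a high--order tail estimate. First I would invoke Theorem~3 of Gamarnik and Goldberg \cite{GamGol13}: for the configuration defining $X'_n$ --- equilibrium arrivals $A'_n$\,, $n$ servers initially busy with independent $F^{(0)}$--distributed residuals, no initial queue --- it provides, for every $t$\,, the stochastic domination of $(Q'_n(t)-n)^+$ by $\sup_{0\le s\le t}Z_n(s)$\,, where $Z_n(s)=A'_n(s)-\sum_{i=1}^nN'_i(s)$ and $N'_1,\dots,N'_n$ are independent equilibrium renewal processes with generic inter--renewal cdf $F$\,, independent of $A'_n$\,. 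Since $Z_n$ has stationary increments, $\sup_{0\le s\le t}Z_n(s)\le\sup_{s\ge0}Z_n(s)$\,, so the dependence on $t$ drops out and it suffices to bound $(b_n\sqrt n)^{-1}\norm{\sup_{s\ge0}Z_n(s)}_{n,\theta}$ for some $\theta>0$\,. Write $Z_n(s)=\overline A'_n(s)-\overline D_n(s)-n\mu(1-\rho_n)s$ with $\overline A'_n(s)=A'_n(s)-\lambda_ns$ and $\overline D_n(s)=\sum_{i=1}^n(N'_i(s)-\mu s)$\,. The heavy traffic condition \eqref{eq:105} gives $n\mu(1-\rho_n)\ge c\,b_n\sqrt n$ for some $c>0$ and all large $n$\,, while, by the appendix bounds (Lemmas \ref{le:renewal_moments} and \ref{le:stat_moments}), by Petrov's inequality applied to the sum of the $n$ independent mean--zero processes $N'_i(s)-\mu s$\,, and by parts~4--6 of Theorem \ref{the:inv} --- in particular $\sup_nn^{1/b_n^2}<\infty$\,, which turns the $\lambda_n^{1/b_n^2}$ and $n^{1/b_n^2}$ factors into genuine constants --- one gets $\norm{\overline A'_n(s)}_n+\norm{\overline D_n(s)}_n\le C\,b_n\sqrt n\,(1+\sqrt s)$ at each fixed $s$\,, with $C$ a genuine constant (up to a harmless subpolynomial--in--$s$ factor). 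Thus the free process drifts down at scale $b_n\sqrt n$ per unit time against fluctuations whose $L^{b_n^2}$--norm over $[0,s]$ is about $b_n$ times the standard deviation $\sqrt{ns}$\,, the inflation by $b_n$ coming precisely from the $(E\xi_n^{b_n^2+1})^{1/b_n^2}$ and $(E\eta^{b_n^2+1})^{1/b_n^2}$ terms.

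Next I would split $[0,\infty)$ into the dyadic blocks $[2^k,2^{k+1})$\,, $k\ge0$\,, together with $[0,1)$\,, and on each block use $n\mu(1-\rho_n)s\ge c\,b_n\sqrt n\,2^k$ and the monotone extension of the supremum to $[0,2^{k+1}]$ to bound $\sup_{2^k\le s<2^{k+1}}Z_n(s)$ by $\sup_{s\le2^{k+1}}(\overline A'_n(s)-\overline D_n(s))-c\,b_n\sqrt n\,2^k$\,. The supremum of the centred process over $[0,2^{k+1}]$ is passed back to its fixed--time moment through a maximal inequality --- Doob's $L^p$ inequality for the random walk embedded in $A'_n$\,, and, for $\overline D_n$\,, the exponential analogue of Lenglart's domination in \cite{Puh01} applied to the martingale part of the Doob--Meyer decomposition of the superposition $\sum_iN'_i$ (at the order $p\asymp b_n^2$ of interest the Doob constant is essentially~$1$) --- which yields $\norm{\sup_{s\le2^{k+1}}(\overline A'_n(s)-\overline D_n(s))}_n\le C'\,b_n\sqrt n\,2^{k/2}$\,. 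Markov's inequality at order $b_n^2$ with level $x\,b_n\sqrt n$ then gives
\[
P\Bl(\sup_{2^k\le s<2^{k+1}}Z_n(s)>x\,b_n\sqrt n\Br)\le\Bl(\frac{C'\,2^{k/2}}{x+c\,2^k}\Br)^{b_n^2}\,,
\]
and summing this geometric series over $k$ --- separating $2^k\le x$ from $2^k>x$ --- together with the $[0,1)$ block produces, for some constants $C_1,C_2>0$ and $\kappa\in(0,1)$ and uniformly in $t\in\R_+$ and in all large $n$\,,
\[
P\bl(X'_n(t)^+>x\br)\le C_2\,(C_1/x)^{\kappa b_n^2}\qquad\text{for }x\ge C_1\,.
\]

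To finish, I would convert this tail bound into the moment bound. Fix $\theta\in(0,\kappa)$\,: since the exponent $(\theta-\kappa)b_n^2-1$ is strictly below $-1$\,, the integral $\int_{x_0}^\infty x^{\theta b_n^2-1}(C_1/x)^{\kappa b_n^2}\,dx$ converges and the prefactor $\theta b_n^2$ coming from $E(X'_n(t)^+)^{\theta b_n^2}=\int_0^\infty\theta b_n^2\,x^{\theta b_n^2-1}P(X'_n(t)^+>x)\,dx$ is absorbed; choosing a sufficiently large constant $x_0$ one gets $E(X'_n(t)^+)^{\theta b_n^2}\le x_0^{\theta b_n^2}+O(1)$\,, hence, taking $(\theta b_n^2)$--th roots, $\limsup_{n\to\infty}\sup_{t\in\R_+}\norm{X'_n(t)^+}_{n,\theta}\le x_0<\infty$\,, which is \eqref{eq:15}.

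I expect the main obstacle to be the middle step: organising the maximal estimates and the sum over the dyadic time scales so that the heavy--traffic drift (scale $b_n\sqrt n$ per unit time) genuinely dominates the moderate--deviations--inflated renewal fluctuations (scale $b_n\sqrt n\,\sqrt s$ over $[0,s]$\,, together with additive lower--order $b_n\sqrt n$ terms and the subpolynomial $s^{O(1/b_n^2)}$ inflation) at every time scale and at moment orders $\asymp b_n^2\to\infty$ --- controlling in particular the supremum of the superposition $\sum_iN'_i$\,, which is not a martingale in time and so needs the Lenglart--type machinery of \cite{Puh01}. This forces the tail exponent to carry a true constant rather than a power of $b_n$ and forces $\theta$ strictly below $\kappa$\,, which is why \eqref{eq:15} is claimed only for some $\theta>0$ rather than for $\theta=1$\,; keeping all the renewal moment estimates uniform in $n$ at order $b_n^2$ is also what consumes the moment conditions of parts~4--6 of Theorem \ref{the:inv}.
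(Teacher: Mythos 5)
Your starting point (Gamarnik--Goldberg's Theorem 3), the dyadic Prohorov decomposition, and the final tail--to--moment conversion all match the paper's strategy; the genuine gap is in the middle step you yourself flag as the main obstacle.

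You propose to control $\sup_{s\le 2^{k+1}}(\overline A'_n(s)-\overline D_n(s))$ via Doob's inequality for the embedded random walk in $A'_n$ and via the exponential Lenglart bound of \cite{Puh01} for the superposition $\overline D_n(s)=\sum_i(N'_i(s)-\mu s)$. The difficulty is that $\overline D_n$ is not a martingale with a tractable compensator: the Doob--Meyer compensator of $\sum_iN'_i$ is $\sum_i\int_0^s h_i(u)\,du$\,, where $h_i$ is the hazard of the $i$th age process, \emph{not} $n\mu s$\,. A Lenglart-type bound would control the maximal deviation of the martingale part, but would leave the random gap $\sum_i\int_0^s h_i(u)\,du - n\mu s$ uncontrolled; bounding that requires a separate concentration argument for the ages, which is not sketched and would add a real layer of work (the quadratic characteristic against which Lenglart is applied is itself random and age-dependent). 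As written, the step ``which yields $\norm{\sup_{s\le 2^{k+1}}(\overline A'_n(s)-\overline D_n(s))}_n\le C'b_n\sqrt n\,2^{k/2}$'' does not follow from the ingredients invoked.

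The paper sidesteps this entirely. Following Gamarnik and Goldberg, it evaluates the supremum of $A_n(s)-\sum_kN'_k(s)$ only at arrival epochs $\tau_{n,i}$\,, getting $\sup_i\bl(i(1-1/\rho_n)-\sum_k\overline N'_k(\tau_{n,i})-n\mu\overline\tau_{n,i}\br)$\,, and splits this into $X_{n,i}$ (the $\sum_k\overline N'_k$ piece) and $Y_{n,i}$ (the $\overline\tau_{n,i}$ piece). Because $\tau_{n,i}$ is independent of the $N'_k$\,, the moment of $\sum_k\overline N'_k(\tau_{n,i})$ can be handled by Petrov's bound \emph{conditionally} on $\tau_{n,i}$ and then a simple Jensen argument, with no martingale machinery for the superposition at all; the maximal inequality is then the crude union bound $\norm{\max_{1\le i\le m}|X_{n,i}|}_n\le m^{1/b_n^2}\max_i\norm{X_{n,i}}_n$\,, whose extra factor $m^{1/b_n^2}$ is exactly what the hypothesis $\sup_n n^{1/b_n^2}<\infty$ absorbs. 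This yields the bound $P(\cdots)^{1/b_n^2}\le C/r^{1/3}$ (eqs.\ \eqref{eq:198}, \eqref{eq:213}) and hence \eqref{eq:15} for any $\theta<1/3$\,. If you want to push your continuous-time Lenglart route, you would need to supply the compensator analysis for $\sum_iN'_i$ from scratch; switching to the discrete index $i$ at arrival epochs is the shortcut that makes the estimate elementary.
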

\begin{proof}
By  Theorem 3 in Gamarnik and Goldberg \cite{GamGol13}, for $r\in\R_+$\,,
\begin{equation}
  \label{eq:99}
  P((Q'_n(t)-n)^+>r+1)\le 
P\bl(\sup_{s\in[0,t]}(A'_n(s)-\sum_{k=1}^nN'_k(s))>r+1\br)\,.
\end{equation}

At this stage, it is   convenient  to switch back to
an  ordinary renewal process 
as  an arrival
 process. Let $\tilde A_n(t)=A'_n(t+\xi'_{n,1})-1$\,,
 where $\xi'_{n,1}$  represents the time of the first  renewal
 in the
 equilibrium renewal process $A'_n$ and
$t\in\R_+$\,. 
The process 
$\tilde A_n $ is an ordinary renewal process and
$A'_n(t)\le \tilde A_n(t)+1  $\,.
By
\eqref{eq:99} and \eqref{eq:131}, taking into account that
 $\tilde A_n$ is distributed as $A_n$\,,
\begin{equation}
  \label{eq:246}
P(X_n'(t)>r+1)
\le P(\sup_{s\in\R_+}\frac{1}{b_n\sqrt{n}}\,(A_n(s)-
\sum_{k=1}^nN'_k(s))>r)\,.
\end{equation}
The righthand side is being bounded next.
Let $\tau_{n,i}\,, i\in\N\,,$ represent  arrival times for
$A_n$ and let $\tau_{n,0}=0$\,. 
Let
\[
      \overline N'_k(s)=N'_k(s)-\mu s\,,
\]
\[
  \overline\tau_{n,i}=\tau_{n,i}-\frac{i}{\lambda_n}\,,
\]
and 
\[
    \beta_n=\frac{\sqrt{n}}{b_n}\,(1-\rho_n)\,.
\]

 Then, drawing on Gamarnik and Goldberg \cite{GamGol13}
on the next line,
\begin{multline}
  \label{eq:31}
  \sup_{s\in\R_+}(A_n(s)-\sum_{k=1}^nN'_k(s))=
\sup_{i\in\N}(A_n(\tau_{n,i})-\sum_{k=1}^nN'_k(\tau_{n,i}))
=\sup_{i\in\N}(i-\sum_{k=1}^nN'_k(\tau_{n,i}))
\\
=\sup_{i\in\N}\bl(i(1-\frac{1}{\rho_n})-
\sum_{k=1}^n\overline N'_k(\tau_{n,i})-
n\mu\overline\tau_{n,i}
\br)\,.
\end{multline}
Hence,
\begin{equation}
  \label{eq:218}
  \frac{1}{b_n\sqrt{n}}\,\sup_{s\in\R_+}(A_n(s)-\sum_{k=1}^nN'_k(s))=
  \sup_{i\in\N}(X_{n,i}+Y_{n,i}-
\frac{i}{n}
\,\frac{\beta_n}{\rho_n})\,,
\end{equation}
where
\begin{equation}
  \label{eq:97}
  X_{n,i}=
-\frac{1}{b_n\sqrt{n}}\,\sum_{k=1}^n\overline N'_k(\tau_{n,
i})
\end{equation}
and
\begin{equation}
  \label{eq:196}
    Y_{n,i}=
-\frac{1}{b_n\sqrt{n}}\,
n\mu\overline\tau_{n,i}\,.
\end{equation}
Clearly,
\begin{multline}
  \label{eq:116}
P(  \sup_{i\in\N}(X_{n,i}+Y_{n,i}-
\frac{i}{n}
\,\frac{\beta_n}{\rho_n})>r) \le 
P(  \sup_{i\in\N}(X_{n,i}-
\frac{i}{n}
\,\frac{\beta_n}{2\rho_n})>\frac{r}{2})
\\+P(  \sup_{i\in\N}(Y_{n,i}-
\frac{i}{n}
\,\frac{\beta_n}{2\rho_n})>\frac{r}{2})\,.
\end{multline}
The probabilities on the latter righthand side are worked on in
order.
By Petrov's device \cite{Pet87},
\[
    E\abs{\sum_{k=1}^n\overline
  N'_k(t)}^{ b_n^2}\le
(\frac{ b_n^2}{2}+1)^{ b_n^2}\,n
E\abs{\overline N_1'(t)}^{ b_n^2}
+ b_n^2(\frac{ b_n^2}{2}+1)^{ b_n^2/2}e^{ b_n^2/2+1}
 n^{ b_n^2/2}\bl(E\overline N_1'(t)^2\br)^{ b_n^2/2}\,.
\]
Hence,
\begin{multline}
  \label{eq:123}
    E\abs{\sum_{k=1}^n\overline  
  N'_k(\tau_{n,i})}^{ b_n^2}\le
(\frac{ b_n^2}{2}+1)^{ b_n^2}nE\abs{\overline N_1'(\tau_{n,i})}^{ b_n^2}
\\+ b_n^{2}(
\frac{ b_n^2}{2}
+1)^{ b_n^2/2}e^{ b_n^2/2+1}
 n^{ b_n^2/2}\bl(E\overline N_1'(\tau_{n,i})^2\br)^{ b_n^2/2}\,.
\end{multline}
By Lemma \ref{le:stat_moments}, for $p\ge2$\,,
\begin{equation}
  \label{eq:104}
    E\abs{\overline
  N_1'(t)}^p\le
\tilde C^p\bl((1+ \mu t)
    E(\mu \eta)^{p+1}
+(1+\mu t)^{p/2+1}p^{p/2+1}\bl(E(\mu\eta)^2\br)^{3p/2}\br)\,.
\end{equation}
It follows, with the use of Jensen's inequality, that, for some
$\tilde C'>0\,,$
\begin{equation}
  \label{eq:122}
    E\abs{\overline
  N_1'(\tau_{n,i})}^{ b_n^2}
\le
\tilde C'^{b_n^2}((1+E\xi_{n}i)E\eta^{ b_n^2+1}+b_n^{b_n^2}+
b_n^{ b_n^2}E\xi_n^{ b_n^2/2+1}
i^{ b_n^2/2+1})\,.
\end{equation}
Besides,
the bound in \eqref{eq:104} can be improved when $p=2$:
by Lemma \ref{le:stat_moments}, 
$     E\abs{\overline
  N_1'(t)}^2\le K' t\,,$
for some $K'>0$\,, so that
\begin{equation}
  \label{eq:5}
  E\overline N_1'(\tau_{n,i})^2\le K' E\xi_ni\,.
\end{equation}

Putting together
 \eqref{eq:97},\eqref{eq:123},   \eqref{eq:122} and \eqref{eq:5} yields
for some $\hat C>0$\,,
\begin{multline*}
    E|    X_{n,i}|^{b_n^2}\le
 \hat C^{b_n^2}
\bigl((\frac{b_n}{2}+\frac{1}{b_n})^{b_n^2}n^{1-b_n^2/2}((1+E(n\xi_n)\,\frac{i}{n})E\eta^{b_n^2+1}+b_n^{b_n^2}\\+b_n^{b_n^2}E(n\xi_n)^{b_n^2/2+1}\bl(\frac{i}{n}\br)^{b_n^2/2+1})
+b_n^2(\frac{1}{2}+
\frac{1}{b_n^2})^{b_n^2/2}
e^{b_n^2/2+1}(En\xi_n)^{b_n^2/2}\bl(\frac{i}{n}\br)^{b_n^2/2}\bigr)
\,.
\end{multline*}
Hence,
\[
  \norm{\max_{1\le i\le \lfloor nt\rfloor}\abs{X_{n,i}}}_n\le
\lfloor nt\rfloor^{1/b_n^2}\max_{1\le i\le \lfloor
  nt\rfloor}\norm{X_{n,i}}_n
\le \hat C \hat L_n(t)
\]
where
\begin{multline*}
    \hat L_n(t)=\hat C
 n^{1/b_n^2}t^{1/b_n^2}
\Bigl((\frac{b_n}{2}+\frac{1}{b_n})n^{1/b_n^2-1/2}\bl((1+E(n\xi_n)t)^{1/b_n^2}
(E\eta^{b_n^2+1})^{1/b_n^2}+b_n\\+b_n(E(n\xi_n)^{b_n^2/2+1})^{1/b_n^2}t^{1/2+1/b_n^2}\br)
+b_n^{2/b_n^2}(\frac{1}{2}+
\frac{1}{b_n^2})^{1/2}
e^{1/2+1/b_n^2}(En\xi_n)^{1/2}t^{1/2}\Bigr)
\,.
\end{multline*}
Thus,
for $t>0$ and $\alpha>0$\,,
\begin{equation}
  \label{eq:101}
    P(\max_{1\le i\le \lfloor  nt\rfloor}\abs{X_{n,i}}>
\alpha )^{1/b_n^2}\le\frac{1}{\alpha }\,\hat L_n(t)\,.
\end{equation}
In addition, the hypotheses imply that
 there exists $\breve C>0$ such that,
for $t\ge1$ and all $n$ great enough,
\begin{equation}
  \label{eq:215}
  \hat L_n(t)\le \breve C t^{2/3}\,.
\end{equation}
Now, using a device due to Prohorov \cite{Pro63}, see also Puhalskii
\cite{Puh99a} for an  application to moderate deviations of queues,
for $r>0$\,,
\begin{multline}
  \label{eq:87}
  P\bl(\sup_{i\in\N}(X_{n,i}-\frac{i}{n}\,\frac{\beta_n}{2\rho_n})>\frac{r}{2}\br)
\le P(\max_{1\le i\le \lfloor nr\rfloor}X_{n,i}>\frac{r}{2}
)\\
+\sum_{l=\lfloor \log_2(\lfloor nr\rfloor)\rfloor}^\infty P\bl(
\max_{2^l+1\le i\le  2^{l+1}}
(X_{n,i}-\frac{i}{n}\,\frac{\beta_n}{2\rho_n})>0)
\\
\le P(\max_{1\le i\le \lfloor nr\rfloor}X_{n,i}>\frac{r}{2})+
\sum_{l=\lfloor \log_2(\lfloor nr\rfloor)\rfloor}^\infty P\bl(
X_{n,2^l}-\frac{2^{l-1}}{n}\,\frac{\beta_n}{2\rho_n}>0)
\\
+\sum_{l=\lfloor \log_2(\lfloor nr\rfloor)\rfloor}^\infty P\bl(
\max_{2^l+1\le i\le  2^{l+1}}
(X_{n,i}-X_{n,2^l}-\frac{i-2^{l-1}}{n}\,\frac{\beta_n}{2\rho_n})>0)
\\
\le P(\max_{1\le i\le \lfloor nr\rfloor}X_{n,i}>\frac{r}{2})
+2\sum_{l=\lfloor \log_2(\lfloor nr\rfloor)\rfloor}^\infty P\bl(
\max_{1\le i\le
  2^{l}}X_{n,i}>\frac{2^{l-1}}{n}\,\frac{\beta_n}{2\rho_n})
\,.
\end{multline}
By \eqref{eq:101}, 
\begin{equation}
  \label{eq:207}
   P(\max_{1\le i\le\lfloor nr\rfloor}
X_{n, i}>\frac{r}{2}
)^{1/b_n^2}\le \frac{2}{r}
\,\hat L_n(r)\,.
\end{equation}
Similarly, with $l= \lfloor\log_2\lfloor
nr\rfloor\rfloor+m$\,, 
by \eqref{eq:101},
\begin{multline}
  \label{eq:103}
      P\bl(\max_{1\le i\le 2^l}
X_{n,i}>\frac{2^{l-1}}{n}\,
\frac{\beta_n}{2\rho_n}\br)^{1/b_n^2}\le
P\bl(\max_{1\le i\le\lfloor nr\rfloor 2^m}
X_{n,i}>2^{m-2}r
\frac{\beta_n}{4\rho_n}\br)^{1/b_n^2}\\\le
\frac{\hat L_n(r2^m)}{r2^{m}}\frac{16\rho_n}{\beta_n}\,.
\end{multline}

Since
$
  \sum_{m=0}^\infty2^{-m/3}<\infty\,,
$
\eqref{eq:215},  \eqref{eq:87}, \eqref{eq:207} and \eqref{eq:103} imply that,
 for some $C_X>0$ and all $r>0$\,,
\begin{equation}
  \label{eq:198}
    P\bl(\sup_{i\in\N}(X_{n,i}-\frac{i}{n}\,\frac{\beta_n}{2\rho_n})>\frac{r}{2}\br)^{1/b_n^2}\le \frac{C_X}{r^{1/3}}\,.
\end{equation}
A similar bound holds for $Y_{n,i}$\,.
In some more detail, by \eqref{eq:196} and Petrov \cite{Pet87}, 
\begin{multline*}
  E\abs{Y_{n,i}}^{b_n^2}\le
\mu^{b_n^2}
\bl((\frac{b_n}{2}+\frac{1}{b_n})^{b_n^2}n^{1-b_n^2/2}
E\abs{n\overline\tau_{n,1}}^{b_n^2}\frac{i}{n}
\\+b_n^2(\frac{1}{2}+\frac{1}{b_n^2})^{b_n^2/2}e^{b_n^2/2+1}
(E(n\overline \tau_{n,1})^{2})^{b_n^2/2}\bl(\frac{i}{n}\br)^{b_n^2/2}\br)\,.
\end{multline*}
Mimicking the derivation of \eqref{eq:198} obtains that, for some
$C_Y>0$ and all $r>0$\,,
\begin{equation}
  \label{eq:213}
      P\bl(\sup_{i\in\N}(Y_{n,i}-\frac{i}{n}\,\frac{\beta_n}{2\rho_n})>\frac{r}{2}\br)^{1/b_n^2}\le \frac{C_Y}{r^{1/3}}\,,
\end{equation}
By  \eqref{eq:218},
 \eqref{eq:116}, \eqref{eq:198}, and \eqref{eq:213},
 there 
exists $\overline C>0$ such that, for all $r>0$\,,
\[
\sup_n    P(\sup_{s\in\R_+}\frac{1}{b_n\sqrt{n}}(A_n(s)-\sum_{k=1}^nN'_k(s))>r)^{1/b_n^2}
\le \frac{\overline C}{r^{1/3}}\,.
\]
Recalling  \eqref{eq:246} yields, for $\theta>0$\,,
\begin{multline*}
  E(X_n'(t)^+)^{\theta b_n^2}=\int_0^\infty\theta b_n^2r^{\theta
    b_n^2-1}P(X_n'(t)>r)\,dr
\\
\le\theta b_n^2+\int_0^\infty 
\theta b_n^2(r+1)^{\theta b_n^2-1} P(\sup_{s\in\R_+}\frac{1}{b_n\sqrt{n}}
(A_n(s)-\sum_{k=1}^nN'_k(s))>r)\,dr\,,
\end{multline*}
which  implies \eqref{eq:15} for suitable $\theta$\,.

\end{proof}
Lemma \ref{le:negative part}
and Lemma \ref{le:norm_bound} imply
\eqref{eq:134}.  Theorem \ref{the:stat_exp_tight} has been proved.

\section{Exponential tightness of the stationary residual service times}
\label{sec:residual}  Attention is being turned to residual service
times. The hypotheses of 
Theorem \ref{the:inv} are assumed.
Suppose that initial conditions are chosen to render $(Q_n(t)\,,t\in\R_+)$\,,
supplemented with  the process of  forward recurrence
 times of the arrival process
 and with the process of residual service times, 
a stationary process, see p.348 in Asmussen \cite{Asm03}.
 Let
 $\chi_{n,i}(t)$ represent the residual service time of the
customer in the $i$th server at time $t$\,, with the understanding
that $\chi_{n,i}(t)=0$ if the $i$th server is idle  at $t$\,.
Let  $( S_n(x,t)\,,x\in\R_+)$ represent the complementary empirical
cdf of the residual service times of customers in service at $t$\,, i.e.,
 \begin{equation}
   \label{eq:10'''}
      S_n(x,t)=\frac{1}{n}\,\sum_{i=1}^n\ind{\chi_{n,i}(t)>x}\,.
 \end{equation}
Also, the following representation holds,
 \begin{equation}
   \label{eq:17}
S_n(x,t)=\frac{1}{n}\,   \sum_{i=1}^{ \hat A_n(t)}\ind{\eta_i+\hat\tau_{n,i}-t
>   x}\,.
 \end{equation}
The process $((S_n(x,t),x\in\R_+),t\in\R_+)$ is strictly stationary.
Let $(S_n(x)\,,x\in\R_+)$ represent an $\R_+^{\R_+}$--valued 
random variable distributed as $(S_n(x,t)\,,x\in\R_+)$\,.
\begin{theorem}
  \label{the:emp_cdf}
Under the hypotheses of Theorem \ref{the:inv},
the sequence of processes $ ( \sqrt{n}/b_n
( S_n(x)-(1-F^{(0)}(x)))\,, x\in \R_+)%
$ is $\bbC_0$--exponentially tight of order $b_n^2$ in
$\D(\R_+,\R)$\,. 
\end{theorem}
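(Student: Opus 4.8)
The plan is to use stationarity to pin the time epoch at $0$ (equivalently, to pass to the $t\to\infty$ limit in the primed system of Section~\ref{sec:expon-tightn-stat}, where the initial residuals eventually stop contributing), to represent the complementary empirical residual cdf through \eqref{eq:17}, and to split
\[
\Psi_n(x)=\frac{\sqrt n}{b_n}\bl(S_n(x)-(1-F^{(0)}(x))\br)
\]
into a martingale part, a deterministic part and a ``compensator fluctuation'' part. Writing $nS_n(x)=\sum_i\ind{\eta_i>x-\hat\tau_{n,i}}$, the sum running over the customers that have entered service by time $0$, and compensating each indicator by $1-F(x-\hat\tau_{n,i})$ along the filtration that reveals the i.i.d.\ service times in order of entering service (so that $\eta_i$ is independent of $\hat\tau_{n,i}$ and of the past), one gets $nS_n(x)=M_n(x)+\int_{-\infty}^0(1-F(x-s))\,d\hat A_n(s)$, where, for each fixed $x$, $M_n(x)$ is a martingale in the customer index with predictable variation dominated by $\int_{-\infty}^0(1-F(x-s))\,d\hat A_n(s)$, whose mean is $\lambda_n\mu^{-1}(1-F^{(0)}(x))=n\rho_n(1-F^{(0)}(x))$. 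Using \eqref{eq:40} to write $d\hat A_n=dA_n-d(Q_n(\cdot)-n)^+$ and recalling $(Q_n(\cdot)-n)^+=b_n\sqrt n\,X_n(\cdot)^+$, this gives
\[
\Psi_n(x)=\frac1{b_n\sqrt n}\,M_n(x)+\frac{\sqrt n}{b_n}(\rho_n-1)\bl(1-F^{(0)}(x)\br)+\frac1{b_n\sqrt n}\,\tilde C_n(x),
\]
with $\tilde C_n$ the centred compensator fluctuation, which decomposes further into a ``smoothed'' centred renewal term $\int_{-\infty}^0(1-F(x-s))\,d\overline A_n(s)$ and a ``smoothed'' queue--excess term $\int_{-\infty}^0(1-F(x-s))\,d(Q_n(\cdot)-n)^+$. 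By the criterion of Appendix~\ref{c_0} it then suffices to bound, for some $\theta>0$, the increments $\norm{\Psi_n(x)-\Psi_n(y)}_{n,\theta}$ in terms of $|x-y|$ and the values $\norm{\Psi_n(x)}_{n,\theta}$ in terms of $x$, in a way that produces, after a standard chaining, continuity and vanishing at infinity of the idempotent limit points, and to note that the jumps of $\Psi_n$ are of size at most $1/(b_n\sqrt n)\to0$ (distinct residuals, continuity of $F$ and $F^{(0)}$), which makes the limit points continuous.

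For the martingale part, $|\Delta M_n(x)|\le1$ and $E\langle M_n(x)-M_n(y)\rangle\le\lambda_n\mu^{-1}\bl(F^{(0)}(x)-F^{(0)}(y)\br)$, and since $F^{(0)}(x)-F^{(0)}(y)=\mu\int_y^x(1-F(u))\,du\le\mu(x-y)$ is Lipschitz \emph{irrespective of whether $F$ has a density}, a Rosenthal--type martingale moment inequality with constant explicit in the (growing) order $b_n^2$ yields $\norm{(M_n(x)-M_n(y))/(b_n\sqrt n)}_{n,\theta}\le C\bl((b_n/\sqrt n)\,n^{1/b_n^2}|x-y|^{1/b_n^2}+\sqrt{\mu|x-y|}\br)$ and $\norm{M_n(x)/(b_n\sqrt n)}_{n,\theta}\le C\bl((b_n/\sqrt n)\,n^{1/b_n^2}+\sqrt{1-F^{(0)}(x)}\br)$. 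The term $(b_n/\sqrt n)\,n^{1/b_n^2}$ tends to $0$ along $n\to\infty$ since $\sup_nn^{1/b_n^2}<\infty$ and $b_n/\sqrt n\to0$ by part~6, so it contributes nothing to the relevant $\limsup_n$; the remaining terms give the modulus control (uniform continuity of $F^{(0)}$) and the vanishing at infinity.

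For $\tilde C_n$, the smoothed centred renewal term is estimated with the moment bounds of Appendix~\ref{sec:higher} (Lemmas~\ref{le:stat_moments} and~\ref{le:renewal_moments}, exactly as in Section~\ref{sec:expon-tightn-stat}), the light tail of $F$ in part~5 making the integrals over $(-\infty,0]$ convergent; after dividing by $b_n\sqrt n$ its increments are again of the form $(b_n/\sqrt n)\,n^{1/b_n^2}|x-y|^{1/b_n^2}+C\sqrt{|x-y|}$ and its value is $O((1-F(x))^{1/2})$. The smoothed queue--excess term equals $\int_x^\infty X_n(x-u)^+\,dF(u)-(1-F(x))\,X_n(0)^+$ after an integration by parts, and is controlled by Theorem~\ref{the:stat_exp_tight} (the uniform bound $\sup_t\norm{X_n(t)^+}_{n,\theta}<\infty$ in the stationary regime) together with the trajectorial exponential tightness of $X_n$ from Section~\ref{sec:ld-conv-skor} / Puhalskii~\cite{Puh23}, used to bound its modulus of continuity uniformly over the base point; the kernel $dF(x+\cdot)$ supplies the decay $O(1-F(x))$ as $x\to\infty$. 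The deterministic term converges to $-\beta(1-F^{(0)}(\cdot))$, a fixed element of $\bbC_0$, by \eqref{eq:105}. Collecting the three estimates and invoking Appendix~\ref{c_0} finishes the argument.

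The main obstacle is the estimate for the smoothed queue--excess term: there one must extract from the trajectorial results of Section~\ref{sec:ld-conv-skor} a modulus--of--continuity bound for the \emph{stationary} process $X_n$ that is uniform over base points, and one cannot route this through a modulus of $F$ since $F$ is only assumed continuous — the remedy is to let the $dF$--smoothing act on the (exponentially tight) trajectory of $X_n$ rather than on $F$, using that $F^{(0)}$ is automatically Lipschitz. The second, more routine, difficulty is the martingale moment inequality with constants explicit in the order $b_n^2\to\infty$, together with the bookkeeping showing that every ``$(b_n/\sqrt n)\,n^{1/b_n^2}$''--type error is absorbed by the hypotheses in parts~4--6; since Theorem~\ref{the:stat_exp_tight} only gives order $\theta b_n^2$, one carries $\theta$ throughout and appeals at the end to the elementary fact that $\bbC_0$--exponential tightness of order $\theta b_n^2$ entails it of order $b_n^2$.
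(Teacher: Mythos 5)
Your decomposition of $\Psi_n(x)=\frac{\sqrt n}{b_n}(S_n(x)-(1-F^{(0)}(x)))$ into a martingale part, a deterministic part and a centred compensator part is exactly the paper's: your $M_n(x)/(b_n\sqrt n)$ is the paper's $\tilde U_n(x)=\frac{\sqrt n}{b_n}(S_n(x)-F_n(x))$, and the compensator fluctuation is handled in the paper via \eqref{eq:102} with $Y_n$ and $X_n$, close to what you propose. The observation that $F^{(0)}$ is automatically $\mu$-Lipschitz, and the control of the queue-excess term through $\sup_t\norm{X_n(t)^+}_{n,\theta}$, also match the paper. The genuine difference is in how the martingale/$\tilde U_n$ piece is upgraded to \emph{sup-over-$x$} bounds, and there I see a real gap.

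Your $M_n(x)$, built with the compensator $1-F(x-\hat\tau_{n,i})$, is a martingale \emph{in the customer index} for each fixed $x$, not a martingale in $x$. A Rosenthal-type inequality therefore yields only pointwise (or pairwise-increment) moment bounds, and $\bbC_0$-exponential tightness in $\D(\R_+,\R)$ requires $\lim_{r\to\infty}\limsup_n P(\sup_{y\le x}|\tilde U_n(y)|\ge r)^{1/b_n^2}=0$ and a uniform modulus bound over small intervals. You propose ``a standard chaining'', but your increment estimate contains the term $(b_n/\sqrt n)\,n^{1/b_n^2}|x-y|^{1/b_n^2}$, which does not shrink as $|x-y|\to0$ at fixed $n$ and forces a nontrivial cutoff-and-union argument whose behaviour at moment orders $\theta b_n^2\to\infty$ is not settled by a hand-wave; you also cannot substitute monotonicity here, since $S_n$ and $1-F^{(0)}$ are subtracted at incompatible scales. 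In addition, Rosenthal needs $(\theta b_n^2/2)$-moments of the \emph{random} predictable quadratic variation $\int_{-\infty}^0(F(x-s)-F(y-s))\,d\hat A_n(s)$ — not merely its mean — and this is itself a queue-dependent quantity that has to be controlled via \eqref{eq:8} and Theorem~\ref{the:stat_exp_tight}, a step you do not address. The paper sidesteps all of this by using a different compensator, the hazard-rate integral $\int\frac{dF(u)}{1-F(u)}$ in \eqref{eq:42}, which (Lemma~\ref{le:mart}) makes the centred process a martingale \emph{in $x$} with respect to the flow $\mathbf F_t$; the exponential Lenglart inequality (Lemma 3.2.6 of \cite{Puh01}) then delivers the sup-over-$x$ and increment bounds directly, with the needed control furnished by Lemma~\ref{le:lambda}, and Gronwall--Bellman transfers these to $\tilde U_n$ via \eqref{sm'}. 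That martingale-in-$x$ construction is the key idea missing from your proposal.
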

\begin{remark}\label{re:eq_lim}
  As a byproduct, the cdf of
$\chi_{n,i}$ converges to $F^{(0)}$\,, as $n\to\infty$\,.
\end{remark}
The groundwork is laid first.
 Both  $\hat A_n$ and $A_n$ 
are  processes with stationary increments
 and $E\hat
A_n(t)=EA_n(t)=\lambda_nt$\,. 
These processes are extended to  
processes with stationary increments on the
whole real line with the same finite--dimensional distributions so
that they may assume negative values on the negative halfline.
Introduce
\[
     F_n(x,t)=\frac{1}{n}\,
\sum_{i=1}^{\hat A_n(t)}(1-
F(x+t-\hat\tau_{n,i}))=
\int_0^{t}(1-
F(x+t-y))\,d\hat A_n(y)\,.
\]
Via a time
shift to the left by $t$\,,
$(F_n(x,t),x\in\R_+)$ is distributed as 
$\bl((1/n\,\int_{-t}^0(1-F(x-y))\,d\hat A_n(y)),x\in\R_+\br)$\,. 
Letting $t\to\infty$
implies  that,
 in distribution in $\D(\R_+,\R)$\,,
\[
(   F_n(x,t),x\in\R_+)\to
 (F_n(x),x\in\R_+)\,,
\]
where
\begin{equation}
  \label{eq:32'}
   F_n(x)=
\frac{1}{n}\, \int_{-\infty}^0(1-F(x-y))\,d\hat A_n(y)
\,.
\end{equation}
The latter quantity is finite as $EF_n(x)
=1/n\,\int_{-\infty}^0(1-F(x-y))\,\lambda_n\,dy$\,. 
It admits the interpretation as the proportion
of customers arrived at some time $y$ before time $0$ that 
remain past $x$\,.

As the distribution of $(S_n(x,t)\,, x\in\R_+)$ does not depend on $t$
and the distribution of $(F_n(x,t)\,, x\in\R_+)$ tends to a limit, as
$t\to\infty$\,, given $t_m\to\infty$\,, the sequence 
$\bl((S_n(x,t_m),(F_n(x,t_m)), x\in\R_+)\br)$ 
of random elements of $\D(\R_+,\R^2)$ is
tight, so, it converges along a subsequence to a limit. It may  be
assumed that the limit is $\bl((S_n(x),  F_n(x)), x\in\R_+\br)$\,.
Let \begin{multline}
  \label{eq:28}
    \tilde U_n(x,t)=\frac{\sqrt{n}}{b_n}\,( S_n(x,t)- F_n(x,t))
\\=\frac{1}{b_n\sqrt{n}}\,
\sum_{i=1}^{ \hat A_n(t)}(\ind{\eta_i+\hat\tau_{n,i}-t
>   x}-(1-F(x+t-\hat\tau_{n,i})))\,.
\end{multline}
Putting together what has been established,  along a sequence of
values of $t$\,,
in distribution in $\D(\R_+,\R)$\,,
\begin{equation}
  \label{eq:34}
  (\tilde U_n(x,t),x\in\R_+)
\to(\tilde U_n(x),x\in\R_+)\,,
\end{equation}
where
\[
\tilde U_n(x)=  \frac{\sqrt{n}}{b_n}\,
\bl( S_n(x)- F_n(x)\br)\,.
\]

The next heuristic makes it plausible that $ F_n(x) \to F^{(0)}(x)$, 
so, the appearance of 
  $F^{(0)}$  in Theorem \ref{the:emp_cdf} starts making sense.
Noting that, evidently, $\hat A_n(t)/n\to \mu t$ in probability, as
$n\to\infty$
 (cf.,
Lemma 3.2 in Puhalskii \cite{Puh23}) and 
 recalling \eqref{eq:73},
by integration by parts in \eqref{eq:32'}, 
  with the convergence below being
in probability,
\[
 F_n(x)=
\frac{1}{n}\,\int_{x}^\infty
\hat A_n(y-x)\,dF(y)\to \mu\int_x^\infty(y-x)\,dF(y)\\
=1-F^{(0)}(x)\,.
\]
For an actual proof of Theorem \ref{the:emp_cdf},
 two lemmas are in order.
Let, for $x\in\R_+$ and $t\in\R_+$\,,
\begin{equation}
  \label{eq:59}
  \Lambda_n(x,t)=
\frac{1}{n}\,
\sum_{i=1}^{\hat A_n(t)}
\int_{\eta_i\wedge(t-\hat\tau_{n,i})}^{\eta_i\wedge(t+x-\hat\tau_{n,i})}
\frac{dF(u)}{1-F(u)}
\end{equation}
and
\begin{equation}
  \label{eq:14'}
   \Lambda^\ast_n(t)=\sup_{x\in\R_+} \Lambda_n(x,t)=
\frac{1}{n}\,
\sum_{i=1}^{\hat A_n(t)}\,\int_{\eta_i\wedge(t-\hat\tau_{n,i})}^{\eta_i}
\frac{dF(u)}{1-F(u)}\,.
\end{equation}
\begin{lemma}
  \label{le:lambda}
Under the hypotheses of Theorem \ref{the:inv},
\begin{equation}
  \label{eq:27}
    \limsup_{n\to\infty}\sup_{t\in\R_+}\,\norm{\Lambda^\ast_n(t)}_{n,\theta}<\infty\,,
\end{equation}
where  $\theta>0$ is chosen for   \eqref{eq:134} to hold.
\end{lemma}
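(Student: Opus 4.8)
The plan is to recognise $\Lambda^\ast_n(t)$ as a normalised sum of \emph{residual cumulative hazards} of the customers currently in service, and to exploit the memorylessness of the cumulative hazard $\Lambda(x)=\int_0^x dF(u)/(1-F(u))=-\ln(1-F(x))$ (finite and continuous by \eqref{eq:45}) together with the elementary fact that at most $n$ of the post--$0$ customers can occupy servers at any instant. First I would rewrite the integrand in \eqref{eq:14'} as $\int_{\eta_i\wedge(t-\hat\tau_{n,i})}^{\eta_i}dF(u)/(1-F(u))=\Lambda(\eta_i)-\Lambda\bl(\eta_i\wedge(t-\hat\tau_{n,i})\br)$, which vanishes as soon as $\eta_i\le t-\hat\tau_{n,i}$, i.e., whenever customer $i$ has already departed by $t$. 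Hence, writing $\mathcal I_n(t)=\{i\le\hat A_n(t):\,\hat\tau_{n,i}+\eta_i>t\}$ for the random set of customers that entered service after time $0$ and are still in service at $t$,
\[
\Lambda^\ast_n(t)=\frac1n\sum_{i\in\mathcal I_n(t)}\xi_{n,i}\,,\qquad \xi_{n,i}=\Lambda(\eta_i)-\Lambda(t-\hat\tau_{n,i})\ge 0\,,
\]
and, since the members of $\mathcal I_n(t)$ occupy distinct servers, $\abs{\mathcal I_n(t)}\le n$.

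The crux is a conditional description of the $\xi_{n,i}$. Let $\mathcal F_t$ be the natural filtration of the system at time $t$ (generated by the arrivals, the initial condition, the service--entry epochs and the departures up to $t$). I would argue, from the FCFS non--preemptive many--server dynamics, that each entry epoch $\hat\tau_{n,j}$, $j\le\hat A_n(t)$, is a function of the arrivals and of the service times of customers who have departed by $t$, and in particular does not involve the service time of any customer still in service at $t$ (a server occupied by such a customer throughout $[\hat\tau_{n,i},t]$ is unavailable at every entry epoch $\hat\tau_{n,j}\le t$). Consequently, given $\mathcal F_t$, the only information revealed about $\eta_i$ for $i\in\mathcal I_n(t)$ is the event $\{\eta_i>t-\hat\tau_{n,i}\}$. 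Since the $\eta_i$ are i.i.d.\ with continuous cdf $F$ and independent of the arrivals, and $\Lambda(\eta)$ is standard exponential, the memorylessness of $\Lambda(\eta)$ yields that, conditionally on $\mathcal F_t$, the variables $\xi_{n,i}$, $i\in\mathcal I_n(t)$, are i.i.d.\ standard exponential, whereas $\mathcal I_n(t)$ and $(t-\hat\tau_{n,i})_{i\in\mathcal I_n(t)}$ are $\mathcal F_t$--measurable.

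Granting this, the estimate follows by a short computation. For integer $p$, conditionally on $\mathcal F_t$ the sum $\sum_{i\in\mathcal I_n(t)}\xi_{n,i}$ is $\Gamma(\abs{\mathcal I_n(t)},1)$--distributed, so
\[
E\bl(\Lambda^\ast_n(t)^p\mid\mathcal F_t\br)=\frac1{n^p}\,\frac{\Gamma(\abs{\mathcal I_n(t)}+p)}{\Gamma(\abs{\mathcal I_n(t)})}=\frac1{n^p}\prod_{k=0}^{p-1}\bl(\abs{\mathcal I_n(t)}+k\br)\,,
\]
with the convention that the righthand side is $0$ when $\mathcal I_n(t)=\emptyset$. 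The product is nondecreasing in $\abs{\mathcal I_n(t)}$, and $\abs{\mathcal I_n(t)}\le n$, so $E\Lambda^\ast_n(t)^p\le n^{-p}\prod_{k=0}^{p-1}(n+k)\le(1+p/n)^p$, whence $\norm{\Lambda^\ast_n(t)}_{L^p}\le 1+p/n$. Taking $p=\lceil\theta b_n^2\rceil$ and using monotonicity of $L^p$--norms gives $\norm{\Lambda^\ast_n(t)}_{n,\theta}\le 1+(\theta b_n^2+1)/n$ uniformly in $t\in\R_+$, and since $b_n^2/n\to0$ under the hypotheses, $\limsup_{n\to\infty}\sup_{t\in\R_+}\norm{\Lambda^\ast_n(t)}_{n,\theta}\le 1$. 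Unlike the other estimates in this circle of ideas, this one uses essentially nothing beyond $b_n^2/n\to0$.

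The main obstacle is the conditional i.i.d.--exponential claim of the second step: one must justify carefully that conditioning on the entire history $\mathcal F_t$ reveals no more about an in--service customer's service time than the bare fact that it exceeds the elapsed service time. Concretely, this amounts to verifying that the entry epochs $\hat\tau_{n,j}$, $j\le\hat A_n(t)$, are measurable with respect to the arrival process and the service times of customers already gone by time $t$, a structural feature of FCFS many--server queues without preemption. Once that is in place, the memorylessness applies verbatim and the remaining arithmetic is routine.
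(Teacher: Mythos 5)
Your proof is correct in outline and takes a genuinely different route from the paper's. The paper proceeds analytically: starting from \eqref{eq:92}, it splits the sum $\sum_{i=1+\hat A_n((t-u)^+)}^{\hat A_n(t)}\ind{u<\eta_i}$ into three pieces using the balance identity \eqref{eq:8}, applies Petrov's moment inequality to each, and closes the estimate by invoking the already--established bound $\sup_u\norm{X_n(u)^+}_{n,\theta}=\norm{(X_n^{(s)})^+}_{n,\theta}<\infty$ from Lemma \ref{le:norm_bound} together with moment bounds on $A_n$. Your argument is probabilistic: you recognise $n\Lambda^\ast_n(t)$ as a sum of residual cumulative hazards of the $\abs{\mathcal I_n(t)}\le n$ post--$0$ customers still in service and observe that, conditionally on the history, these are i.i.d.\ standard exponential by memorylessness of $\Lambda(\eta)=-\ln(1-F(\eta))$, which immediately gives $\norm{\Lambda^\ast_n(t)}_{L^p}\le 1+p/n$. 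What this buys is substantial: your bound is explicit ($\limsup\le 1$), it uses only $b_n^2/n\to0$ rather than the full suite of moment conditions on $\xi_n$ and $\eta$, and it decouples Lemma \ref{le:lambda} from the exponential tightness of $X_n^{(s)}$ that the paper's proof needs. The one place that deserves real care is exactly the one you flagged: the claim that conditioning on the time--$t$ history reveals nothing about an in--service customer's $\eta_i$ beyond $\{\eta_i>t-\hat\tau_{n,i}\}$, and that the relevant $\eta_i$'s are conditionally independent. This is true — the entry epochs up to $t$ are unchanged if $\eta_i$ is perturbed within $(t-\hat\tau_{n,i},\infty)$ for any $i\in\mathcal I_n(t)$, since the departure process on $[0,t]$ is unchanged — but this is the same residual--lifetime structure that the paper encodes more indirectly via the martingale of Lemma \ref{le:mart} (citing Lemmas 3.1 and B.2 of Puhalskii and Reed), so it is not a freebie and should be argued with the same care. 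If you supply that argument, this is a cleaner and sharper proof than the paper's.
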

\begin{proof}
Since $  \{t-\hat\tau_{n,i}< u\}
=\{\hat A_n((t-u)^+)< i\}$\,,
by \eqref{eq:14'}, 
\begin{multline}
  \label{eq:92}
\norm{\Lambda^\ast_n(t)}_{n,\theta}=
\frac{1}{n}\,\norm{
\int_0^\infty
\sum_{i=1+ \hat A_n((t-u)^+ )}^{\hat A_n(t)}
\ind{u< \eta_i}\,\frac{dF(u)}{1-F(u)}}_{n,\theta}\\
\le \frac{1}{n}\,
\int_0^\infty
\norm{\sum_{i=1+ \hat A_n((t-u)^+ )}^{\hat A_n(t)}
\ind{u< \eta_i}}_{n,\theta}\,\frac{dF(u)}{1-F(u)}\,.
\end{multline}
Recall that, by \eqref{eq:40},
\begin{equation}
  \label{eq:8}
  {\hat A}_n(t)=(Q_n(0)-n)^++A_n(t)-(Q_n(t)-n)^+.
\end{equation}
One can see that
\begin{multline*}
  \sum_{i=1+\hat A_n((t-u)^+)
}^{\hat A_n(t) }
\ind{u< \eta_i}=
\sum_{i=1+\hat A_n((t-u)^+ )
}^{(Q_n(0)-n)^++A_n((t-u)^+)}\ind{u< \eta_i}\,\\
+
\sum_{i=1+(Q_n(0)-n)^++A_n((t-u)^+)}^{(Q_n(0)-n)^++A_n(t)}\ind{u< \eta_i}\,
-
\sum_{i=1+\hat A_n(t)}^{(Q_n(0)-n)^++A_n(t)}\ind{u< \eta_i}\,.
\end{multline*}
Hence,
\begin{multline}
  \label{eq:21}
\norm{
\sum_{i=1+\hat A_n((t-u)^+)
}^{\hat A_n(t) }
\ind{u< \eta_i}}_{n,\theta}
\le  \norm{
\sum_{i=1+\hat A_n((t-u)^+ )
}^{(Q_n(0)-n)^++A_n((t-u)^+)}\ind{u< \eta_i}\,
}_{n,\theta}
\\+\norm{
\sum_{i=1+\hat A_n(t)}^{(Q_n(0)-n)^++A_n(t)}\ind{u< \eta_i}
}_{n,\theta}+\norm{
\sum_{i=1+(Q_n(0)-n)^++A_n((t-u)^+)}^{(Q_n(0)-n)^++A_n(t)}\ind{u< \eta_i}\,
}_{n,\theta}
\,.
\end{multline}
In order to bound  the terms on the righthand side
of \eqref{eq:21}, the centred versions are looked at first.
 The summation in the first term    is
over customers that are in the queue at time $(t-u)^+$
so that the service times $\eta_i$  in the sum are independent
of $A_n((t-u)^+)$\,, $   Q_n((t-u)^+)$\,, and $Q_n(0)$\,.
Once again using Petrov's bound, Petrov \cite{Pet87}, as
well as \eqref{eq:8},
\begin{multline}
  \label{eq:85}
  E\abs{\sum_{i=1+\hat A_n((t-u)^+)
 }^{(Q_n(0)-n)^++A_n((t-u)^+)}
\bl(\ind{u< \eta_i}-(1-F(u))\br)}^{\theta b_n^2}
\\
=E\Bl(  E\abs{\sum_{i=1}^N\bl(
\ind{u< \eta_i}-(1-F(u))\br)}^{\theta b_n^2}\Big|_{N=(Q_n((t-u)^+)-n)^+}\Br)
\\
\le E\Bl((\frac{\theta b_n^2}{2}+1)^{\theta b_n^2}NE\abs{
\ind{u\le \eta_1}-(1-F(u))}^{\theta b_n^2}\Big|_{N=(Q_n((t-u)^+)-n)^+}\\
+\theta b_n^2(\frac{\theta b_n^2}{2}+1)^{\theta b_n^2/2}e^{\theta b_n^2/2+1}
\bl(NE(
\ind{u\le
  \eta_1}-(1-F(u)))^{2}\br)^{\theta b_n^2/2}\Big|_{N=(Q_n((t-u)^+)-n)^+}\Br)\\
=E\Bl((\frac{\theta b_n^2}{2}+1)^{\theta b_n^2}N\bl(F(u)(1-F(u))^{\theta b_n^2}+
(1-F(u))F(u)^{\theta b_n^2}\br)\Big|_{N=(Q_n((t-u)^+)-n)^+}\\
+\theta b_n^2(\frac{\theta b_n^2}{2}+1)^{\theta b_n^2/2}e^{\theta b_n^2/2+1}
\bl(N\bl(F(u)(1-F(u))\br)^{\theta b_n^2/2}\Big|_{N=(Q_n((t-u)^+)-n)^+}\Br)
\\\le (1-F(u))\Bl(\bl(\frac{\theta b_n^2}{2}+1\br)^{\theta b_n^2}
\norm{(Q_n((t-u)^+)-n)^+)}_{n,\theta}\\+
\theta b_n^2\bl(\frac{\theta b_n^2}{2}+1\br)^{\theta b_n^2/2}e^{\theta b_n^2/2+1}
\norm{(Q_n((t-u)^+)-n)^+}_{n,\theta}^{\theta b_n^2/2}
\Br)\,.\end{multline}
Assuming that $\theta\le 1$ implies that, for $n$ great,
\begin{multline*}
  \norm{\sum_{i=1+\hat A_n((t-u)^+)
}^{(Q_n(0)-n)^++A_n((t-u)^+) }\bl(
\ind{u< \eta_i}-(1-F(u))\br)}_{n,\theta}\\
\le(1-F(u))^{1/b_n^2}
\Bl(b_n^{1/(\theta b_n^2)+2}n^{1/(2\theta b_n^2)}
\norm{(X_n((t-u)^+))^+}_{n,\theta}^{1/(\theta b_n^2)}\\+
b_n^{2/(\theta b_n^2)+3/2}n^{1/4}e^{1/2+1/(\theta b_n^2)}\norm{X_n((t-u)^+)^+}_{n,\theta}^{1/2}
\Br)\,.
\end{multline*}
Noting that $\int_0^\infty (1-F(u))^{1/b_n^2-1}\,dF(u)=b_n ^2$
and keeping in mind \eqref{eq:8}, yield
\begin{multline}
  \label{eq:96}
    \frac{1}{n}\,
\int_0^\infty\norm{
\sum_{i=1+\hat A_n((t-u)^+ )
}^{(Q_n(0)-n)^++A_n((t-u)^+) }
\ind{u< \eta_i}}_{n,\theta}\,\frac{dF(u)}{{1-F(u)}}
\\
\le  \frac{1}{n}\,
\int_0^\infty\norm{
\sum_{i=1+\hat A_n(t-u)^+)}^{(Q_n(0)-n)^++A_n((t-u)^+) }
(\ind{u< \eta_i}-(1-F(u)))}_{n,\theta}\,\frac{dF(u)}{{1-F(u)}}\\
+\frac{1}{n}\,
\int_0^\infty\norm{
(Q_n((t-u)^+)-n)^+ }_{n,\theta}
\,dF(u)
\le\frac{ b_n^2}{n}\, \bl(b_n^{1/(\theta b_n^2)+2}\,n^{1/(2\theta b_n^2)}
\sup_{u\in\R_+}\norm{X_n(u)^+}_{n,\theta}^{1/(\theta b_n^2)}\\+
b_n^{2/(\theta b_n^2)+3/2}n^{1/4}e^{1/2+1/(\theta b_n^2)}
\sup_{u\in\R_+}\norm{X_n(u)^+}_{n,\theta}^{1/2}
\br)+\frac{b_n}{\sqrt{n}}\,\sup_{u\in\R_+}\norm{X_n(u)^+}_{n,\theta}
\,.
\end{multline}
Since $X_n(t)$ is a stationary process for the chosen initial
condition, $\sup_{u\in\R_+}\norm{X_n(u)^+}_{n,\theta}=\norm{(X_n^{(s)})^+}_{n,\theta}\,.$
It is bounded in $n$ by Lemma \ref{le:norm_bound}.
In addition, the hypotheses imply that
$b_n^4n^{1/(2\theta b_n^2)-1}\to0$\,, $b_n^{7/2}n^{-3/4}\to0$ and $b_n/\sqrt n\to0$\,.
Hence,
\begin{equation}
  \label{eq:16'}
  \lim_{n\to\infty}\frac{1}{n}\,
\sup_t\int_0^\infty\norm{
\sum_{i=1+\hat A_n((t-u)^+) }
^{(Q_n(0)-n)^++A_n((t-u)^+) }
\ind{u< \eta_i}}_{n,\theta}\,\frac{dF(u)}{{1-F(u)}}=0\,.
\end{equation}
The second and third terms on the righthand side of \eqref{eq:21} are
handled in a like fashion.
Similarly to \eqref{eq:85},
\begin{multline*}
    E\abs{\sum_{i=1+\hat A_n(t )
}^{(Q_n(0)-n)^++A_n(t) }(
\ind{u< \eta_i}-(1-F(u))}^{\theta b_n^2}
\le (1-F(u))\Bl(\bl(\frac{\theta b_n^2}{2}+1\br)^{\theta b_n^2}\norm{(Q_n(t)-n)^+}_{n,\theta}\\+
\theta b_n^2\bl(\frac{\theta b_n^2}{2}+1\br)^{\theta b_n^2/2}
e^{\theta b_n^2/2+1}\norm{(Q_n(t)-n)^+}_{n,\theta}^{\theta b_n^2/2}
\Br)\,,
\end{multline*}
which implies, in analogy with \eqref{eq:96} and \eqref{eq:16'}, that
\begin{equation}
  \label{eq:12}
    \lim_{n\to\infty}\frac{1}{n}\,\sup_t
\int_0^\infty\norm{
\sum_{i=1+\hat A_n(t)
}^{(Q_n(0)-n)^++A_n(t) }
\ind{u< \eta_i}}_{n,\theta}\,\frac{dF(u)}{{1-F(u)}}=0\,.
\end{equation}
By $A_n$ being an equilibrium process, as well as by
  arrivals and service being independent,
\begin{multline*}
      E\abs{\sum_{i=1+(Q_n(0)-n)^++A_n((t-u)^+)}^{(Q_n(0)-n)^++A_n(t) }\bl(
\ind{u< \eta_i}-(1-F(u))\br)}^{\theta b_n^2}\\=
E\abs{\sum_{i=1}^{A_n(t-(t-u)^+) }\bl(
\ind{u< \eta_i}-(1-F(u))\br)}^{\theta b_n^2}
\\
\le(1-F( u))\Bl(\bl(\frac{\theta b_n^2}{2}+1\br)^{\theta b_n^2}\norm{
A_n(t\wedge u)}_{n,\theta}+
\theta b_n^2\bl(\frac{\theta b_n^2}{2}+1\br)^{\theta b_n^2/2}e^{\theta
  b_n^2/2+1}\norm{A_n(t\wedge u)}_{n,\theta}^{\theta b_n^2/2}
\Br)\,
\end{multline*}
so that
\begin{multline*}
\sup_t  \norm{
\sum_{i=1+(Q_n(0)-n)^++A_n((t-u)^+)}^{(Q_n(0)-n)^++A_n(t)}\bl(
\ind{u< \eta_i}-(1-F(u))\br)}_{n,\theta}\\
\le (1-F(u))^{1/b_n^2}\bl((\frac{\theta b_n^2}{2}+1)\norm{
A_n(u)}_{n,\theta}^{1/(\theta b_n^2)}\\+
b_n^{2/b_n^2}\bl(\frac{\theta b_n^2}{2}+1\br)^{1/2}e^{1/2+1/b_n^2}\norm{
A_n(u)}_{n,\theta}^{1/2}\br)\,.
\end{multline*}
According to \eqref{eq:130}, recalling that $\theta\le1$\,,
$\norm{A_n(u)}_{n,\theta}\le\norm{A_n(u)}_{n}\le  Cn(1+u)$\,.
Note that
$u^{1/(\theta b_n^2)}(1-F(u))^{1/b_n^2}\le u^{1/(\theta
  b_n^2)-1}(E\eta^{b_n^2})^{1/b_n^2}\to0$\,, as $u\to\infty$\,. Therefore,
\begin{multline*}
  b_n^2\int_1^\infty(1-F(u))^{1/b_n^2-1} u^{1/(\theta b_n^2)}\,dF(u)=
-(1-F(1))^{1/b_n^2}\\+\int_1^\infty(1-F(u))^{1/b_n^2}(\theta b_n^2)^{-1}u^{1/(\theta
  b_n^2)-1}\,du
\le \frac{\norm{\eta}_n}{
\theta b_n^2}\int_1^\infty u^{1/(\theta b_n^2)-2}\,du=
\frac{\norm{\eta}_n}{
\theta b_n^2-1}\,.
\end{multline*}
 Similarly,
 \begin{multline*}
    b_n^2\int_1^\infty(1-F(u))^{1/b_n^2-1} u^{1/2}\,dF(u)
=-(1-F(1))^{2/b_n^2}\\+
\frac{1}{2}\int_1^\infty(1-F(u))^{1/b_n^2}u^{-1/2}\,du
\le \norm{\eta}_n\int_1^\infty u^{-3/2}\,du=
2\norm{\eta}_n\,.
 \end{multline*}
It follows that
\begin{multline*}
      \lim_{n\to\infty}\sup_{t}\frac{1}{n}\,
\int_0^\infty\norm{
\sum_{i=1+(Q_n(0)-n)^++A_n((t-u)^+)}^{(Q_n(0)-n)^++A_n(t) }\bl(
\ind{u< \eta_i}-(1-F(u))\br)}_{n,\theta}\,\frac{dF(u)}{{1-F(u)}}\\=0\,.
\end{multline*}
Also
\[      \limsup_{n\to\infty}\sup_{t}\frac{1}{n}\,
\int_0^\infty\norm{A_n(t)-A_n((t-u)^+)}_{n,\theta}\,dF(u)
<\infty
\]
so that
\begin{equation}
  \label{eq:10}
      \limsup_{n\to\infty}\frac{1}{n}\,\sup_t
\int_0^\infty\norm{
\sum_{i=1+(Q_n(0)-n)^++A_n((t-u)^+)}^{(Q_n(0)-n)^++A_n(t) }
\ind{u< \eta_i}}_{n,\theta}\,\frac{dF(u)}{{1-F(u)}}<\infty\,.\end{equation}
Putting together 
 \eqref{eq:92}, \eqref{eq:21},  \eqref{eq:16'}, \eqref{eq:12}, and  \eqref{eq:10}
yields  \eqref{eq:27}.
\end{proof}
Let $\mathcal{F}_{x,t}$ represent
 the complete $\sigma$--algebra generated by
the random variables $\hat A_n(s)$\,, where $s\le t$\,, and by
$\ind{\hat\tau_{n,i}\le u}\ind{\eta_i\le y}$\,, where
 $i\in\N$ and $u+y\le x+ t$\,.
One can see that $\mathbf F_t=
(\mathcal{F}_{x,t}\,,x\in\R_+)$ is a filtration, i.e., it is a
rightcontinuous flow of complete $\sigma$--algebras.
\begin{lemma}
  \label{le:mart}
Given $t>0$\,, the process $( M_n(x,t)\,, x\in\R_+)$ such that
\begin{equation}
  \label{eq:42}
   M_n(x,t)=\frac{1}{b_n\sqrt{n}}\,\sum_{i=1}^{\hat A_n(t)}
\bl(\ind{t<\hat\tau_{n,i}+\eta_i\le x+t}
-\int_{\eta_i\wedge(t-\hat\tau_{n,i})}^{\eta_i\wedge(x+t-\hat\tau_{n,i})}
\frac{dF(u)}{1-F(u)}\br)
\end{equation}
is a martingale with respect to $\mathbf F_t$\,. 
\end{lemma}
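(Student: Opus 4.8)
The plan is to recognise $M_n(\cdot,t)$ as a compensated sum of indicators, where the compensator is built from the hazard function of $F$, and to verify the martingale property directly by checking that, for $x'<x$, the $\mathcal{F}_{x',t}$-conditional expectation of the increment $M_n(x,t)-M_n(x',t)$ vanishes. Fix $t>0$ throughout and write, for the $i$th customer that has entered service by time $t$, $a_i=\hat\tau_{n,i}$ and recall that on the event $\{i\le\hat A_n(t)\}$ one has $a_i\le t$, so $a_i$ is $\mathcal{F}_{0,t}$-measurable; moreover $\hat A_n(t)$ itself is $\mathcal{F}_{0,t}$-measurable. The only randomness that is \emph{not} in $\mathcal{F}_{x',t}$ concerns, for each such $i$, whether $\eta_i\le x'+t-a_i$ (this \emph{is} known, since $a_i+\eta_i\le x'+t$ is an $\mathcal{F}_{x',t}$-event) versus the precise location of $\eta_i$ in $(x'+t-a_i,x+t-a_i]$ on the event $\{\eta_i>x'+t-a_i\}$. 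Thus the increment of the $i$th summand, namely
\[
\ind{x'+t<a_i+\eta_i\le x+t}
-\int_{\eta_i\wedge(x'+t-a_i)}^{\eta_i\wedge(x+t-a_i)}\frac{dF(u)}{1-F(u)}\,,
\]
is genuinely $\mathcal{F}_{x',t}$-conditionally nontrivial only on $\{\eta_i>x'+t-a_i\}=\{a_i+\eta_i>x'+t\}$, and on its complement it is identically $0$.

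On the event $B_i=\{a_i\le t,\ a_i+\eta_i>x'+t\}\in\mathcal{F}_{x',t}$, the conditional law of $\eta_i$ given $\mathcal{F}_{x',t}$ is the law of $\eta$ conditioned on $\{\eta>x'+t-a_i\}$, because $\eta_i$ is independent of $\hat A_n$ and of all $\eta_j$, $j\ne i$, and the only conditioning imposed by $\mathcal{F}_{x',t}$ on $\eta_i$ is the lower truncation $\eta_i>x'+t-a_i$. Writing $c_i=x'+t-a_i\ge 0$ on $B_i$, the tail-quotient elementary identity
\[
P(\eta\le x+t-a_i\mid \eta>c_i)
=1-\frac{1-F(x+t-a_i)}{1-F(c_i)}
=\int_{c_i}^{(x+t-a_i)\wedge \eta_i}\frac{dF(u)}{1-F(u)}\text{-type}\quad\text{(in expectation)}
\]
is what I would use; more precisely, I claim that, on $B_i$,
\[
E\Bl[\ind{\eta_i\le x+t-a_i}\,\Big|\,\mathcal{F}_{x',t}\Br]
=E\Bl[\int_{c_i}^{\eta_i\wedge(x+t-a_i)}\frac{dF(u)}{1-F(u)}\,\Big|\,\mathcal{F}_{x',t}\Br]\,.
\]
This follows from the standard fact that for a nonnegative r.v.\ $\eta$ with continuous cdf $F$, $\int_0^{\eta}dF(u)/(1-F(u))$ has mean equal to $1$ on every tail event in the sense $E[\int_{c}^{\eta\wedge z}dF(u)/(1-F(u))\mid \eta>c]=P(\eta\le z\mid\eta>c)$, obtained by Fubini: $E[\int_{c}^{\eta\wedge z}dF(u)/(1-F(u))\ind{\eta>c}]=\int_c^z P(\eta>u)\,dF(u)/(1-F(u))=\int_c^z dF(u)=F(z)-F(c)$, and dividing by $P(\eta>c)=1-F(c)$. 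Summing over $i$ from $1$ to $\hat A_n(t)$ (a sum with $\mathcal{F}_{0,t}\subseteq\mathcal{F}_{x',t}$-measurable, hence conditionally constant, number of terms) and dividing by $b_n\sqrt n$ gives $E[M_n(x,t)-M_n(x',t)\mid\mathcal{F}_{x',t}]=0$. Adaptedness of $M_n(x,t)$ to $\mathcal{F}_{x,t}$ is immediate from the definitions, and integrability follows since $\hat A_n(t)\le (Q_n(0)-n)^++A_n(t)$ has finite $p$th moments under the hypotheses (indeed each summand is bounded by $1+\int_0^{\eta_i}dF(u)/(1-F(u))$, which has finite moments of the required orders by the renewal-moment bounds).

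The main obstacle is bookkeeping with the double truncation $\eta_i\wedge(t-a_i)$ and $\eta_i\wedge(x+t-a_i)$ in the compensator: one must separate the customers into those already departed by time $t$ (i.e.\ $a_i+\eta_i\le t$, for which the compensator increment in $x$ is zero because both limits equal $\eta_i$, matching the indicator which is also zero), those still present at $t$ but departing in $(t,x'+t]$ (already resolved at level $\mathcal{F}_{x',t}$), and those still present at $x'+t$ (the event $B_i$ analysed above). Once the partition is in place the Fubini computation is routine; I would also remark that essentially the same argument, with $x'=0$, shows $M_n(\cdot,t)$ starts at $0$, consistent with $M_n(0,t)=0$ from \eqref{eq:42}. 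No delicate estimate is needed here — the lemma is a clean compensator identity — and the harder quantitative work is deferred to the exponential-tightness estimates that use this martingale together with Lemma \ref{le:lambda}.
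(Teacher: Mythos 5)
Your approach differs from the paper's: you compute $E[M_n(x,t)-M_n(x',t)\,|\,\mathcal{F}_{x',t}]$ directly, partitioning the customers and using the hazard-function/Fubini identity, whereas the paper writes $M_n(x,t)$ as a plane integral $\int\ind{s\le t}\ind{u+s\le x+t}\,dL_n(u,s)$, approximates the (non-rectangular) region by finite unions of rectangles, and invokes a two-parameter-martingale orthogonality property of $L_n$ borrowed from Lemma B.2 of Puhalskii and Reed \cite{PuhRee10}. Your partition of the customers into departed-by-$t$, departed-in-$(t,x'+t]$, and still-present-at-$x'+t$ is correct, and the Fubini computation for the compensator on $B_i$ is standard.

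The problem lies in the justification of the crucial step. You assert that on $B_i$ the conditional law of $\eta_i$ given $\mathcal{F}_{x',t}$ is the truncated law, ``because $\eta_i$ is independent of $\hat A_n$ and of all $\eta_j$, $j\ne i$''. The independence of $\eta_i$ from $\hat A_n$ is false: $\hat A_n$ is the entry-into-service process (not the exogenous arrival process $A_n$), and it depends on the service times through the departure epochs, including on $\eta_i$ itself once customer $i$ departs and frees a server. What is actually needed is a conditional-independence (causality/no-anticipation) assertion: on the event $\{\hat\tau_{n,i}+\eta_i>x'+t\}$, the value of $\eta_i$ beyond $x'+t-\hat\tau_{n,i}$ does not influence $\hat A_n|_{[0,t]}$ nor any of the generating variables $\ind{\hat\tau_{n,j}\le u}\ind{\eta_j\le y}$ with $u+y\le x'+t$ (because on $B_i$ customer $i$ has not departed by $x'+t$, so $\eta_i$ cannot yet have affected the entry times of later customers), and hence $\eta_i$ has the truncated law conditionally on $\mathcal{F}_{x',t}$ and $B_i$. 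That is the entire non-routine content of the lemma, and it is precisely what the paper outsources to the two-parameter-martingale property \eqref{eq:142} (its Lemma B.2 analogue). As written, your proof replaces this content with an incorrect independence claim, so the argument is incomplete; supplying the causality argument would make your alternative route sound.
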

\begin{proof}
It is required to prove that if $x\le y$\,, then
\begin{equation}
  \label{eq:146}
  E( M_n(y,t)|\mathcal{F}_{x,t})= M_n(x,t)\,.
\end{equation}
The proof draws on  Lemma 3.1
in Puhalskii and Reed \cite{PuhRee10}.
Let
  \[
    L_n(x,s)=\frac{1}{b_n\sqrt{n}}\,\sum_{i=1}^{\hat A^n(s)}
\Bl(\ind{\eta_i\le x}-\int_0^{\eta_i\wedge x}
\frac{dF(u)}{1-F(u)}\Br)\,.
  \]
Then,
\begin{equation}
  \label{eq:141}
   M_n(x,t)=\int_{\R_+^2}\ind{s\le t}\ind{u+s\le x+t}dL_n(u,s)\,.
\end{equation}
It can be proved, in analogy with the proof of
 Lemma B.2 in Puhalskii and Reed \cite{PuhRee10}, that
for  $x\le x'$ and $s\le s'$\,,
\begin{equation}
  \label{eq:142}
  \mathbf{E}\bl(\square L_n((x,s),(x',s'))|\mathcal{F}_{x,s}\br)=0\,,
\end{equation}
where 
$\square
L_n\bl((x,s),(x',s')\br)=
L_n(x',s')-L_n(x',s)-L_n(x,s')+L_n(x,s)$\,.
Let $R_{x,t}$ represent the integration region in \eqref{eq:141}:
$R_{x,t}=\{(u,s):\, 
0<u\le x+t-s,0<s\le t\}$\,.
It  is   approximated with a union of rectangles.
More specifically, let
$0=s^{(l)}_0<s^{(l)}_1<\ldots<s^{(l)}_{l-1}<s^{(l)}_l=t$\, be a
partition of $[0,t]$\,, where $l\in\N$\,. It is assumed that
\begin{equation}
  \label{eq:152}
  \lim_{l\to\infty}
\max_{0\le i\le l-1}(s^{(l)}_{i+1}-s^{(l)}_i)= 0\,.
\end{equation}
The region $ R_{x,t}^{(l,i)}=\{(u,s):\, 
u\in(0, x+t-s]\,, s\in(s^{l}_i,s^{(l)}_{i+1}]\}$
is approximated with the region
$(0,x+t-s^{(l)}_{i}]\times (s^{l}_i,s^{(l)}_{i+1}]$\,.
Accordingly, $R_{x,t}$ is approximated with
$\cup_{i=1}^l R_{x,t}^{(l,i)}$ and $   M_n(x,t)$
is approximated with
\begin{multline}
  \label{eq:147}
     M_n^{(l)}(x,t)=\int_{\R_+^2}
\ind{(u,s)\in\cup_{i=1}^l R_{x,t}^{(l,i)}}\,dL_n(u,s)\\=
\sum_{i=0}^{l-1}\square L_n((0,s^{(l)}_i),(x+t-s^{(l)}_{i},s^{(l)}_{i+1}))\,.
\end{multline}
Similarly,
$ M_n(y,t)$ is approximated with
\begin{equation}
  \label{eq:148}
  M_n^{(l)}(y,t)=
\sum_{i=0}^{l-1}\square L_n((0,s^{(l)}_i),(y+t-s^{(l)}_{i},s^{(l)}_{i+1}))\,.
\end{equation}
By \eqref{eq:142},
\begin{multline*}
  E(\square L_n((0,s^{(l)}_i),(y+t-s^{(l)}_{i},s^{(l)}_{i+1}))
|\mathcal{F}_{x+t-s^{(l)}_{i},s^{(l)}_{i}})
\\=E(\square L_n((0,s^{(l)}_i),(x+t-s^{(l)}_{i},s^{(l)}_{i+1}))
|\mathcal{F}_{x+t-s^{(l)}_{i},s^{(l)}_{i}})\,.
\end{multline*}
Since $\mathcal{F}_{x,t}\subset 
\mathcal{F}_{x+t-s^{(l)}_{i},s^{(l)}_{i}}$\,,
\[
  E(\square L_n((0,s^{(l)}_i),(y+t-s^{(l)}_{i},s^{(l)}_{i+1}))
|\mathcal{F}_{x,t})
=E(\square L_n((0,s^{(l)}_i),(x+t-s^{(l)}_{i},s^{(l)}_{i+1}))
|\mathcal{F}_{x,t})
\]
and by \eqref{eq:147} and \eqref{eq:148},
\[
    E( M_n^{(l)}(y,t)|\mathcal{F}_{x,t})=E(
 M_n^{(l)}(x,t)|\mathcal{F}_{x,t})\,.
\]
By \eqref{eq:141}, \eqref{eq:147}, \eqref{eq:148} and
\eqref{eq:152}, as $l\to\infty$\,, $ M_n^{(l)}(y,t)\to
M_n(y,t)$
and $ M_n^{(l)}(x,t)\to M_n(x,t)$\,.
Similarly to the proof of (C.20a) and (C.20b) in Puhalskii and Reed
\cite{PuhRee10}, 
\begin{align*}
  \lim_{l\to\infty}E( M_n^{(l)}(y,t)|\mathcal{F}_{x,t})=
E(  M_n(y,t)|\mathcal{F}_{x,t})
\intertext{ and}
\lim_{l\to\infty}E( M_n^{(l)}(x,t)|\mathcal{F}_{x,t})=
  M_n(x,t)\,,
\end{align*}
proving \eqref{eq:146}.

\end{proof}
 
\begin{proof}[Proof of Theorem \ref{the:emp_cdf}]

The measure of jumps of $( M_n(x,t)\,, x\in\R_+)$
 is
\[
  \tilde\mu_n([0,x],\Gamma,t)=\ind{1/(b_n\sqrt{n})\in\Gamma}
\sum_{i=1}^{\hat A_n(t)}
\ind{t<\hat\tau_{n,i}+\eta_i\le t+ x}\,,
\]
where $\Gamma\subset \R\setminus\{0\}$\,.
The associated predictable measure of jumps is
\[
    \nu_n([0,x],\Gamma,t)=\ind{1/(b_n\sqrt{n})\in\Gamma}
\sum_{i=1}^{\hat A_n(t)}\int_{\eta_i\wedge(t-\hat\tau_{n,i})}^{\eta_i\wedge(x+t-\hat\tau_{n,i})}
\frac{dF(u)}{1-F(u)}\,.
\]
Define the stochastic cumulant
\begin{multline*}
   G_n(\alpha,x,t)=\int_0^x\int_\R
(e^{\alpha v}-1-\alpha v)\nu_n(dy,dv,t)\\=
(e^{\alpha /(b_n\sqrt{n})}-1-\frac{\alpha}{ b_n\sqrt{n}})\sum_{i=1}^{\hat A_n(t)}\int_{\eta_i\wedge(t-\hat\tau_{n,i})}^{\eta_i\wedge(x+t-\hat\tau_{n,i})}
\frac{dF(u)}{1-F(u)}\,.
\end{multline*}
On recalling \eqref{eq:59}, 
\[
    G_n(\alpha,x,t)=
(e^{\alpha /(b_n\sqrt{n})}-1-\frac{\alpha}{ b_n\sqrt{n}})
n\Lambda_n(x,t)\,.
\]
By \eqref{eq:14'},
\[
  \sup_{x\in\R_+}G_n(\alpha,x,t)\le
(e^{\alpha /(b_n\sqrt{n})}-1-\frac{\alpha}{ b_n\sqrt{n}})
n\Lambda_n^\ast(t)\,.
\]
The process $(e^{\alpha  M_n(x,t)-
G_n(\alpha,x,t)}\,,x\in\R_+)$ is a local martingale, so,
\[
  Ee^{\alpha  M_n(\sigma,t)-
G_n(\alpha,\sigma,t)}\le1\,,
\]
for arbitrary stopping time $\sigma$\,.  Invoking Lemma 3.2.6 on p.282
in Puhalskii  \cite{Puh01} yields, for 
 $\alpha>0$\,,  $\gamma>0$\,,  and $r>0$\,, 
\[
  P(\sup_{y\le x} M_n(y,t)\ge r)\le e^{\alpha  b_n^2(\gamma-r)}+
P(G_n(\alpha  b_n^2,x,t)\ge\alpha  b_n^2\gamma)
\]
so that
\[
    P(\sup_{x \in\R_+} M_n(x,t)\ge r)\le
e^{\alpha  b_n^2(\gamma-r)}+
P(      G_n^\ast(\alpha,t)
\ge\alpha  b_n^2\gamma)\,,
\]
where
\[
      G_n^\ast(\alpha,t)=
(e^{\alpha /(b_n\sqrt{n})}-1-\frac{\alpha}{ b_n\sqrt{n}})
n\Lambda_n^\ast(t)\,.
\]

Thus, 
\[  \limsup_{n\to\infty}\limsup_{t\to\infty}
  P(\sup_{x\in\R_+} M_n(x,t)\ge r)^{1/b_n^2}
\le e^{\alpha (\gamma-r)}+
\limsup_{n\to\infty}\limsup_{t\to\infty}
\frac{\norm{G_n^\ast(\alpha,t)}_{n,\theta}}{
\alpha b_n^2\gamma}\,,
\]
which implies, in light of Lemma \ref{le:lambda}, that
\[
    \lim_{r\to\infty}\limsup_{n\to\infty}\limsup_{t\to\infty}
  P(\sup_{ x\in\R_+} M_n(x,t)\ge r)^{1/b_n^2}=0\,.
\]
By a similar line of reasoning, 
\[
  \lim_{r\to\infty}\limsup_{n\to\infty}\limsup_{t\to\infty}
  P(\sup_{x\in\R_+}(- M_n(x,t))\ge r)^{1/b_n^2}=0
\]
so that
\begin{equation}
  \label{eq:75}
  \lim_{r\to\infty}\limsup_{n\to\infty}\limsup_{t\to\infty}
  P(\sup_{x\in\R_+}\abs{ M_n(x,t)}\ge r)^{1/b_n^2}=0\,.
\end{equation}
 Similarly, it is proved that, for $\epsilon>0$ and $a>0$\,,
\begin{equation}
  \label{eq:52}
  \lim_{\delta\to0}
  \limsup_{n\to\infty}\limsup_{t\to\infty}\sup_{y\le a}
  P(\sup_{x\in[y,y+\delta]}
\abs{ M_n(x,t)- M_n
(y,t)}\ge \epsilon)^{1/b_n^2}=0\,.
\end{equation}
(The inequality in \eqref{eq:23'} below also works.)
It is proved next that
\begin{equation}
  \label{eq:7'}
  \lim_{x\to\infty}\limsup_{n\to\infty}
\limsup_{t\to\infty}  P(\sup_{y\ge x}
\abs{ M_n(y,t)- M_n(x,t)}\ge \epsilon)^{1/b_n^2}=0\,.
\end{equation}
Since, with $\alpha\in\R$\,, the process $(e^{\alpha
(  M_n(x+y,t)- M_n(x,t))-
(G_n(\alpha,x+y,t)-G_n(\alpha,x,t))}\,,y\in\R_+)$ is a local martingale so that,
for arbitrary stopping time
$\sigma$\,,
\[
E e^{\alpha
(   M_n(\sigma+y,t)- M_n(y,t))-(
G_n(\alpha,\sigma+y,t)-G_n(\alpha,\sigma,t))}\le1\,,\]  by
Lemma 3.2.6 on p.282 in Puhalskii  \cite{Puh01},
for arbitrary $\alpha>0$\,,  $\gamma>0$\,, $\epsilon>0$ and $\delta>0$\,, 
\begin{multline}
  \label{eq:23'}
      P(\sup_{y\in[x,x+\delta]}(  M_n(y,t)- M_n(x,t))\ge \epsilon)^{1/b_n^2}\le
e^{\alpha(\gamma- \epsilon)}
\\+P(G_n(\alpha b_n^2,x+\delta,t)-G_n(\alpha b_n^2,x,t)\ge\alpha
  b_n^2\gamma)^{1/b_n^2}\,.
\end{multline}
On letting $\delta\to\infty$\,,
\begin{multline}
  \label{eq:9}
    P(\sup_{y\ge x}(  M_n(y,t)- M_n(x,t))\ge \epsilon)^{1/b_n^2}\le
e^{\alpha(\gamma- \epsilon)}\\
+P(G_n^\ast(\alpha b_n^2,t)-G_n(\alpha b_n^2,x,t)\ge\alpha
  b_n^2\gamma)^{1/b_n^2}\,.
\end{multline}
Since $ \Lambda_n^\ast(t)-\Lambda_n(x,t)\downarrow 0$\,, as
$x\to\infty$\,, and 
$\norm{\Lambda_n^\ast(t)-\Lambda_n(x,t)}_{n,\theta}\le \norm{\Lambda_n^\ast(t)}_{n,\theta}$\,,
by dominated convergence and Lemma \ref{le:lambda}, 
\[
\lim_{x\to\infty}\limsup_{n\to\infty}\limsup_{t\to\infty}
  P(G_n^\ast(\alpha b_n^2,t)-G_n(\alpha b_n^2,x,t)\ge\alpha
  b_n^2\gamma)^{1/b_n^2}=0\,.
\]
Assuming that   $\epsilon>\gamma$ in \eqref{eq:9} and letting
$\alpha\to\infty$  yield \eqref{eq:7'}.

Owing to \eqref{eq:28} and \eqref{eq:42},
\beq{sm'}
\tilde U_n(x,t)=-\int_0^x
\frac{\tilde U_n(y,t)}{1-F(y)}\,dF(y) + M_n(x,t)\,,\;\;
x\in\R_+,\,t\in\R_+\,.
\eeq
By \eqref{eq:75}, recalling that $F(y)<1$ for all $y$\,, yields, for
$x>0$\,, via the Gronwall--Bellman inequality,
\begin{equation}
    \label{eq:66a}
\lim_{r\to\infty}  \limsup_{n\to\infty}\limsup_{t\to\infty}
  P(\sup_{y\le x}\abs{\tilde U_n(y,t)}\ge r)^{1/b_n^2}=0
\end{equation}
and, for  $a>0$\,,
\begin{equation}
  \label{eq:52a}
  \lim_{\delta\to0}  \limsup_{n\to\infty}\limsup_{t\to\infty}
\sup_{y\in[0,a]}  P(\sup_{x\in[y,y+\delta]}
\abs{\tilde U_n(x,t)-\tilde U_n
(y,t)}\ge \eta)^{1/b_n^2}=0\,.
\end{equation}
Thanks to  \eqref{eq:66a}, \eqref{eq:52a}, \eqref{eq:34},
Fatou's lemma and Theorem
A.3 in Puhalskii \cite{Puh23}, 
the sequence
$(\tilde U_n(x),x\in\R_+)$ is 
$\bbC$--exponentially tight in $\D(\R_+,\R)$\,.

  In  addition, by 
\eqref{eq:30}, \eqref{eq:73}, \eqref{eq:46}, 
\eqref{eq:32'}, and \eqref{eq:8},
  \begin{multline}
  \label{eq:102}
  \frac{\sqrt{n}}{b_n}\,( F_n(x)-(1-F^{(0)}(x)))=
\int_{0}^\infty 
\frac{\sqrt{n}}{b_n}\,\bl(\frac{1}{n}\hat
A_n(s)-\mu s\br)\,
\,d_sF(x+s)\\
=\int_{x}^\infty
Y_n (s-x)
\,dF(s)  
-\int_{x}^\infty
X_n(s-x)\,dF(s) 
\,.
\end{multline}
By the LD convergence of $Y_n$ to $Y$ and the continuous mapping
principle, for $L>x$\,,
$\int_{x}^L
Y_n(s-x)
\,dF(s)  $  LD converges to
$\int_{x}^L
Y(s-x)
\,dF(s)  $\,. Also, the inequality
\eqref{eq:128}, the definitions  \eqref{eq:46} and
\eqref{eq:16}, and the heavy traffic condition \eqref{eq:105}  imply that
\[
  \lim_{L\to\infty}\limsup_{n\to\infty}\int_L^\infty \norm{Y_n(s-x)}_n\,dF(s)
=0\,.
\]
It follows that
$  \int_{x}^\infty
Y_n(s-x)
\,dF(s)$ LD converges to 
$  \int_{x}^\infty
Y(s-x)
\,dF(s)$\,.
Furthermore, by a similar line of reasoning and the joint convergence
in the statement of Theorem \ref{the:heavy_traffic},
$  \int_{x}^\infty
Y_n(s-x)
\,dF(s)$
and $  \int_{x}^\infty
X_n(s-x)
\,dF(s)$ jointly LD converge to 
$  \int_{x}^\infty
Y(s-x)
\,dF(s)$ 
and $  \int_{x}^\infty
X(s-x)
\,dF(s)$ (recall  the fact that $X_n$ is stationary and
Remark \ref{re:stat_exp}) so that
 the leftmost side of
\eqref{eq:102} LD converges to
$\int_{x}^\infty
Y(s-x)
\,dF(s)  
-\int_{x}^\infty
X(s-x)\,dF(s) 
\,.$

Since 
\begin{equation}
  \label{eq:121}
  \frac{\sqrt{n}}{b_n}\,( S_n(x)-(1-F^{(0)}(x)))
=\tilde U_n(x)
+\frac{\sqrt{n}}{b_n}( F_n(x)-(1-F^{(0)}(x)))\,,
\end{equation}
the sequence   $(\sqrt{n}/b_n)\,( S_n(x)-(1-F^{(0)}(x)))$ is
$\bbC$--exponentially tight\,.

In order to show that the LD limit points of the sum on the righthand side
of \eqref{eq:121} tend to zero as
$x\to\infty$\,, denote by  $\tilde\Pi$  a deviability on
$\bbC(\R_+,\R^3)$ that is a subsequential LD limit of
the distributions of $(X_n,Y_n,\tilde U_n)$
and let $(X,Y,\tilde U)$ denote the canonical idempotent variable
on $\bbC(\R_+,\R^3)$\,. 
Since by Theorem \ref{the:inv}
 $W(t)=(   Y(t)+\beta\mu t)/\sigma$ defines
 a standard Wiener idempotent process,
\[
\int_{x}^\infty
Y  (s-x)\,dF(s)=
\int_{x}^\infty
(\sigma W(s-x)-\beta\mu(s-x))\,dF(s)\,.
\]
By integration by parts,
\[
  \int_{x}^\infty
 W(s-x)\,dF(s)
=\int_{0}^\infty
 (1-F(x+s))\dot W(s)\,ds\,.
\]
By the Cauchy--Schwartz inequality and by 
$\int_0^\infty\dot W(s)^2\,ds$ being finite
$\tilde\Pi$--a.e., the  quantity on the latter righthand side converges to
zero, as $x\to\infty\,$, $\tilde\Pi$-a.e. Also, 
$\int_x^\infty(s-x)\,dF(s)\to0$\,, as $x\to\infty$\,.
Thus, $\int_{x}^\infty
Y(s-x)
\,dF(s)  \to 0$\,, as $x\to\infty$\,, $\tilde\Pi$-a.e.
By the LD convergence of $X_n$ to $X$\,,
\[
  \liminf_{n\to\infty}\sup_t\norm{X_{n}(t)}_{n,\theta}\ge \sup_{t}
\sup_{(X,Y,\tilde U)\in\bbC(\R_+,\R^3)} X(t)\tilde\Pi(X,Y,\tilde U)^{1/\theta}\,.
\]
The lefthand side being finite due to Theorem
\ref{the:stat_exp_tight},
 $\sup_{t\in\R_+}X(t)<\infty$ $\tilde \Pi$--a.e., which implies that
$\int_{x}^\infty
X(s-x)\,dF(s) \to0$\,, as $x\to\infty$\,, $\tilde\Pi$--a.e.
This concludes the proof of the claim that the  LD limit points of
$(\sqrt{n}/b_n)\,( F_n(x)-(1-F^{(0)}(x)))$\, tend to zero, as $x\to\infty$\,.

In order to show that the LD limit points of $\tilde U_n(x)$ tend to 0, as
$x\to\infty$\,, note that, by   \eqref{eq:34} and \eqref{sm'}, there exists the
 limit in distribution in $\D(\R_+,\R)$ $ M_n(x)=
\lim_{t\to\infty} M_n(x,t)$ and
\[
  \tilde U_n(x)=-\int_0^x
\frac{\tilde U_n(y)}{1-F(y)}\,dF(y)+ M_n(x)\,,\;\;
x\in\R_+\,.
\]
Taking an LD subsequential 
limit in the latter equation yields, by the continuous
mapping principle, 
\begin{equation}
  \label{eq:21'}
  \tilde U(x)=-\int_0^x
\frac{\tilde U(y)}{1-F(y)}\,dF(y)+ M(x)\,,\;\;
x\in\R_+\,,  \tilde\Pi\text{-a.e.}\,,
\end{equation}
where
$ M=( M(x),x\in\R_+)$
 is a continuous idempotent process.
Solving \eqref{eq:21'} yields
\begin{multline}
  \label{eq:22'}
  \tilde U(x)=(F(x)-1)\int_0^x
\frac{M(y)}{(1-F(y))^2}\,dF(y)+M(x)
=(F(x)-1)\int_0^x
\frac{M(y)-M(x)}{(1-F(y))^2}\,dF(y)\\
+(1-F(x))M(x)
\,.
\end{multline}
Since $M$ is an LD limit point of $ M_n$\,,
the limit \eqref{eq:7'} implies that
\[
    \lim_{x\to\infty} \tilde\Pi((X,Y,\tilde U):\,\sup_{y\ge x}
\abs{M(y)-M(x)}\ge \epsilon)=0
\]
so that, given $a>0$\,, $M(x)$ converges to a finite limit as
$x\to\infty$ uniformly on the set $\{(X,Y,\tilde U):\,\tilde\Pi(X,Y,\tilde U)\ge a\}$\,, cf. the proof of Lemma \ref{the:c0}.
Since  $F(x)\to1$, as $x\to\infty$\,,
by \eqref{eq:22'}, 
$  \tilde U(x)\to0$ uniformly on that set.
 \end{proof}

\section{Coupling of idempotent processes and
the proof of the main result}
\label{sec:moder-devi-stat}
 This section  proves Theorem \ref{the:inv},
so, the hypotheses  of the theorem are assumed throughout.
Let  $\Pi^W$ 
 and $\Pi^K$ denote the idempotent distributions of 
the  standard Wiener idempotent
process
$W$
and of the   Kiefer idempotent process $K$\,, respectively. Let
 $ \Pi=\Pi^W\times \Pi^K$
 denote the product
deviability, which is an idempotent measure on 
 $ \Upsilon=
\bbC(\R_+,\R)\times \bbC(\R_+,\bbC([0,1],\R))\,.$
  Given $x\in\R$ and $\phi\in\bbC_0(\R_+,\R)$\,,
let $\Pi_{x,\phi}$ denote the idempotent distribution of the solution
$X=(X(t)\,,t\in\R_+)$
to the equation 
\begin{multline*}
          X(t)=(1-F(t))x^+ -(1-F^{(0)}(t))x^-+\phi(t)-\beta F^{(0)}(t)
+\int_0^tX(t-s)^+\,dF(s)
\\
+\sigma\int_0^t\bl(1-F(t-s)\br)\,\dot W(s)\,ds
+\int_0^t\int_{0}^t \ind{x'+s\le t}\,\dot K(F(x'),\mu s)\,dF(x')\,\mu \,ds
\,,
\end{multline*}
where the standard Wiener idempotent process $W$ and the Kiefer idempotent
process $K$ are independent.
One can see that the mapping $(W,K)\to X$ is strictly Luzin so
that $\Pi_{x,\phi}$ is a deviability. 
Sample paths of $X$ are denoted by
$q=(q(t)\,,t\in\R_+)\in\bbC(\R_+,\R)$\,, i.e., $q(t)=X(\upsilon,t)$\,, where $\upsilon\in\Upsilon$\,.
Clearly, $\Pi_{x,\phi}(q)=\Pi(X=q)$\,.
Let also, for  $y\in \R$\,,
\begin{equation}
  \label{eq:225}
  \Pi_{x,\phi,t}(y)=\Pi_{x,\phi}(q(t)=y)(=\sup_{q:\,q(t)=y}\Pi_{x,\phi}(q))\,.
\end{equation}
(Since the set $\{q:\,q(t)=y\}$ is closed and $\Pi_{x,\phi}(q)$ is upper
compact in $q$\,, the supremum in \eqref{eq:225} is attained.)
It is noteworthy that $\Pi_{x,\phi,t}(y)$ is a deviability density on $\R$
as a function of $y$ because the map $q\to q(t)$ is continuous. 

Let the idempotent process 
$\hat X= (\hat X(t)\,,t\in\R_+)$ be defined
as the special case of $X$ where $x=-\beta$ and $\phi(t)=0$\,, i.e.,
\begin{multline}
  \label{eq:137}
            \hat{X}(t)=-\beta
+\int_0^t\hat X(t-s)^+\,dF(s)
+\sigma\int_0^t\bl(1-F(t-s)\br)\,\dot W(s)\,ds\\
+\int_0^t\int_{0}^t \ind{x+s\le t}\,\dot K(F(x),\mu s)\,dF(x)\,\mu \,ds\,.
\end{multline}
Let 
$\hat \Pi_t(y)=\Pi(\hat X(t)=y)=\Pi_{-\beta,\mathbf 0,t}(y)$\,. 

The next theorem proves  Theorem \ref{the:inv}.

\begin{theorem}
  \label{the:idemp_inv}
Under   the hypotheses of  Theorem \ref{the:inv},
 the sequence of the distributions of $X^{(s)}_{n}$ LD converges
at rate $b_n^2$ to deviability $\hat \Pi$ on $\R$ such that
\begin{equation}
  \label{eq:10'}
  \hat\Pi(y)=\lim_{t\to\infty}\hat\Pi_{t}(y)\,, y\in\R\,.
\end{equation}
Furthermore, 
for any bounded continuous nonnegative function $f$ on $\R$ and
compact $B\subset\R\times\bbC_0(\R_+,\R)$\,,
\begin{equation}
  \label{eq:11'''}
    \lim_{t\to\infty}\sup_{(x,\phi)\in B}\abs{
\sup_{y\in\R}f(y)\Pi_{x,\phi,t}(y)-
\sup_{y\in\R}f(y)\hat\Pi(y)}=0\,.
\end{equation}
\end{theorem}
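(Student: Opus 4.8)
The plan is to execute the programme of the Introduction: identify the LD limit of the stationary laws by passing to the limit in the invariance identity \eqref{eq:29}, using the exponential tightness of Sections \ref{sec:expon-tightn-stat}--\ref{sec:residual}, the continuous trajectorial LD convergence of Theorem \ref{the:heavy_traffic}, and the LD behaviour of mixtures from Appendix \ref{cont_conv}. The heart of the argument is \eqref{eq:11'''}, which expresses that the transient deviability $\Pi_{x,\phi,t}$ forgets $(x,\phi)$ as $t\to\infty$, uniformly over compacts of $\R\times\bbC_0(\R_+,\R)$. I would proceed in three parts.

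\emph{Part 1: the time limit from the equilibrium.} Writing $I_{O,t}(y)=-\ln\hat\Pi_t(y)=\inf\{I^Q(q):\,q(t)=y\}$ (note $q(0)=-\beta$ is forced by \eqref{eq:1}; cf.\ \eqref{eq:2}, \eqref{eq:137}), I would first show that $t\mapsto I_{O,t}(y)$ is non-increasing. Since the fluid dynamics behind \eqref{eq:137} has $-\beta$ as equilibrium, the trajectory that stays at $-\beta$ on $[0,t_0]$ and then follows an optimal length-$s$ excursion to $y$ solves \eqref{eq:1} again — the flat piece contributes nothing to $\int_0^\cdot q(\cdot-u)^+\,dF(u)$ and, for $w$ and $k$ supported on $[t_0,t_0+s]$, nothing to the driving terms before $t_0$ — and its cost equals that of the excursion, so $I_{O,t_0+s}(y)\le I_{O,s}(y)$ for all $t_0\ge0$. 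Hence $\hat\Pi_t(y)$ increases to $\hat\Pi(y):=\exp(-\inf_tI_{O,t}(y))$, the quasipotential \eqref{eq:36}, with $\hat\Pi_t\uparrow\hat\Pi$ pointwise and therefore $\sup_yf(y)\hat\Pi_t(y)\to\sup_yf(y)\hat\Pi(y)$ for bounded continuous $f\ge0$. That $-\ln\hat\Pi$ coincides with the $I^{(s)}$ of Theorem \ref{the:inv} follows by interchanging ``$q$ hits $y$ at some finite time'' with ``$q(t)\to y$'' at no extra cost, using the absolute continuity of the solutions of \eqref{eq:1}; upper compactness of $\hat\Pi$ is recovered a posteriori from Part 3, $\hat\Pi$ being an LD limit of the exponentially tight $X_n^{(s)}$.

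\emph{Part 2: the coupling and \eqref{eq:11'''}.} Fix a compact $B\subset\R\times\bbC_0(\R_+,\R)$ and bounded continuous $f\ge0$, and couple the idempotent process $X$ started at $(x,\phi)$ with $\hat X$. For the lower bound on $\sup_yf(y)\Pi_{x,\phi,t}(y)$: run $X$ with zero driving on $[0,T_t]$, $T_t\to\infty$, $T_t\le t/2$; because $\phi\in\bbC_0$ and the transient fluid terms $(1-F(t))x^+-(1-F^{(0)}(t))x^-$ vanish, the fluid relaxation drives $X$ and its $dF$-weighted past uniformly close to $-\beta$ over $B$, after which the driving near-optimal for $\hat\Pi_{t-T_t}(y)$ is transplanted onto $[T_t,t]$ and, by continuous dependence of the solution of \eqref{eq:70} on its data (Gronwall--Bellman, Lemma B.1 in Puhalskii and Reed \cite{PuhRee10}, cf.\ Appendix \ref{renewal}), brings $X(t)$ near $y$ at essentially the same cost; with Part 1 this gives $\liminf_t\inf_{(x,\phi)\in B}\bigl(\sup_yf\Pi_{x,\phi,t}-\sup_yf\hat\Pi\bigr)\ge0$. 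For the matching upper bound: a $(W,K)$ near-optimal for $\sup_yf(y)\Pi_{x,\phi,t}(y)$ has total cost bounded uniformly in $t$ and in $(x,\phi)\in B$ (the supremum being bounded below by the lower bound), so the trajectory $X$ is uniformly bounded, and an averaging argument produces a window $[T,T+L_t]\subset[t/4,t/2]$ on which $(W,K)$ spends cost $o(1)$; over it $X$ approximately follows the fluid dynamics down to near $-\beta$ — here $L_t\to\infty$ is needed so that the slowly decaying memory $1-F$ of the pre-window part is negligible, only $\int_0^\infty(1-F(s))\,ds<\infty$ being available — and the remaining driving $(W,K)|_{[T+L_t,t]}$, transplanted onto $\hat X$, reaches the same target at no larger cost, yielding $\limsup_t\sup_{(x,\phi)\in B}\bigl(\sup_yf\Pi_{x,\phi,t}-\sup_yf\hat\Pi\bigr)\le0$. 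Throughout, the shift-invariance of $\tfrac12\int\dot w^2$ and $\tfrac12\int\int\dot k^2$ lets restarts of $\hat X$ be treated as $\hat X$. The main obstacle is precisely making the relaxation and memory-truncation estimates quantitative and uniform over $B$ with only a finite mean assumed for $F$; this is where $\phi\in\bbC_0$ is indispensable.

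\emph{Part 3: the invariance identity.} By Theorems \ref{the:stat_exp_tight} and \ref{the:emp_cdf} the stationary pair $(X_n(0),\Phi_n)$ is exponentially tight in $\R\times\D(\R_+,\R)$ with $\bbC_0$--exponentially tight second component, hence LD relatively compact (Puhalskii \cite{Puh91,Puh93}); any subsequential LD limit $\Pi$ is a deviability on $\R\times\bbC_0(\R_+,\R)$ whose first marginal $\Pi^{(1)}$ is the corresponding limit of $X_n^{(s)}$. Conditioning the identity $E f(X_n^{(s)})^{b_n^2}=E f(X_n(t))^{b_n^2}$ on $(X_n(0),\Phi_n)$, applying the continuous trajectorial LD convergence of Theorem \ref{the:heavy_traffic} to the conditional laws and the mixture LD convergence of Appendix \ref{cont_conv}, and passing to the limit along the subsequence yields, for every $t$ and every bounded continuous $f\ge0$,
\[
\sup_yf(y)\Pi^{(1)}(y)=\sup_{(x,\phi)\in\R\times\bbC_0}\Bigl(\sup_yf(y)\Pi_{x,\phi,t}(y)\Bigr)\Pi(x,\phi).
\]
Letting $t\to\infty$: since $0\le\sup_yf(y)\Pi_{x,\phi,t}(y)\le\sup_yf$ and $\{\Pi\ge a\}$ is compact, the right side may, up to $o(1)$ as $a\to0$, be restricted to $\{\Pi\ge a\}$, on which \eqref{eq:11'''} gives uniform convergence to $\sup_yf(y)\hat\Pi(y)$; with $\sup_{(x,\phi)}\Pi(x,\phi)=1$ this forces $\sup_yf(y)\Pi^{(1)}(y)=\sup_yf(y)\hat\Pi(y)$ for all such $f$, hence $\Pi^{(1)}=\hat\Pi$ (test against narrow bumps and use upper semicontinuity). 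Thus every subsequential LD limit of $X_n^{(s)}$ equals $\hat\Pi$, so $X_n^{(s)}$ LD converges to $\hat\Pi$; translating this into the LDP at rate $b_n^2$ with deviation function $-\ln\hat\Pi=I^{(s)}$ proves Theorem \ref{the:idemp_inv}, and with it Theorem \ref{the:inv}.
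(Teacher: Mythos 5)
Your Parts 1 and 3 reproduce, respectively, the monotonicity argument that opens the proof of Lemma~\ref{le:idemp_meas_limit} ("sitting at $-\beta$ incurs zero cost") and the invariance-plus-mixture argument of Lemma~\ref{le:stat_LDP} (exponential tightness of $(X_n(0),\breve U_n(0))$, Theorem~\ref{the:heavy_traffic} for the conditional laws, Lemma~\ref{le:LDconv_mixt}); these are correct and match the paper. The crux of the theorem is Part 2 — establishing \eqref{eq:11'''} via coupling — and there your sketch is looser than what is needed.

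In the paper, the upper bound $\Pi_{x,\phi,t}(y)\le\hat\Pi_t(y)+\kappa$ rests on Lemma~\ref{le:limit_dynamics}: uniformly over the compact $B$ and over the compact level set $K_{x,\phi}=\{q:\Pi_{x,\phi}(q)\ge\kappa\}$, trajectories converge to $-\beta$, which produces a \emph{fixed}, $t$-independent relaxation time $\hat T_{\epsilon,u}$. After the time shift, the optimal $q$ is compared with the exact solution $q_\epsilon$ of \eqref{eq:26} started at $\pm\epsilon-\beta$ via the monotone-comparison Lemma~\ref{le:monot}, giving $\tilde q\lesseqgtr q_\epsilon$; the explicitly built $\hat q_\epsilon$ of \eqref{eq:51} therefore \emph{crosses} $y$ at some time $\le t$, and an intermediate-value argument plus the monotonicity of $\hat\Pi_t$ in $t$ deliver \eqref{eq:232} at $y$ exactly, with cost overshoot $\le\kappa$ read off the explicit $\dot{\hat w}$ in \eqref{eq:155}. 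Your averaging device is an alternative way to locate a relaxation window, but you then assert that the transplanted $\hat X$ ``reaches the same target at no larger cost.'' That is not true as stated: the residual memory of the pre-window path, the tail $\int_{s-T-L_t}^{s}q(s-s')^+\,dF(s')$, and the fact that the process at the end of the window is only $\epsilon$-close to $-\beta$, mean the transplant lands only \emph{near} $y$. The nontrivial content of the coupling is precisely how approximate agreement is converted into the needed inequalities; the paper does this through the ordering supplied by Lemma~\ref{le:monot} on the upper-bound side and through the $B_\delta(y)$-ball together with upper semicontinuity of the deviability on the lower-bound side, and neither mechanism appears in your sketch. A secondary point: once total cost is bounded, the Cauchy--Schwarz estimates of Lemma~\ref{le:limit_dynamics} already make the idempotent noise contribution vanish pointwise in $s$, uniformly on compact level sets, which is what the relaxation actually requires; the averaging is therefore a detour rather than a substitute for that lemma.
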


A proof outline for Theorem \ref{the:idemp_inv} is as follows.
 It is  observed that, for any initial condition $(x,\phi)$\,,
 the idempotent process $X$ comes arbitrarily close to
$-\beta$ in the long term (Lemma \ref{le:limit_dynamics}), which leads
to a coupling of $X$ and $\hat X$\,.
It is observed also that $\hat\Pi_t(y)$ is a monotonically increasing
function of $t$\,, so, the  limit in
\eqref{eq:10'} exists.
Owing to the coupling, there exists a
unique long term idempotent distribution of $X$ (Lemma 
\ref{le:idemp_meas_limit}),  no
matter $(x,\phi)$\,.
Any  LD limit point in distribution of 
$\{ X_n^{(s)}\,,n\in\N\}$ is shown to be the
long term idempotent distribution of $X$ (Lemma \ref{le:stat_LDP}).

\begin{lemma}
  \label{le:limit_dynamics}
As $t\to\infty$\,,
$  X(t)\to-\beta \;\; \text{$\Pi$--a.e.}$ uniformly over $(x,\phi)$
 from compact subsets
of $\R\times\bbC_0(\R_+,\R)$\,.
\end{lemma}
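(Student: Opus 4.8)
The plan is to treat the equation defining $X$ as a nonlinear renewal equation $X(t)=G(t)+\int_0^t X(t-s)^+\,dF(s)$, where $G(t)$ collects $(1-F(t))x^+-(1-F^{(0)}(t))x^-+\phi(t)-\beta F^{(0)}(t)$, the Wiener term $\sigma\int_0^t(1-F(t-s))\dot W(s)\,ds$, and the Kiefer term $\mathcal K(t)=\int_{\R_+^2}\ind{x'+s\le t}\dot K(F(x'),\mu s)\,dF(x')\,\mu\,ds$; and then to prove, for each fixed realisation of $(W,K)$ of finite action (which is exactly what ``$\Pi$--a.e.'' requires) and each fixed $(x,\phi)$, that (i) $G(t)\to-\beta$ and $G$ is bounded, (ii) $X$ is bounded, (iii) $\liminf_{t\to\infty}X(t)\ge-\beta\ge\limsup_{t\to\infty}X(t)$. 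The uniformity over a compact $B$ I would reduce to two fixed initial conditions by monotonicity: the solution operator $G\mapsto X$ is order preserving, because $q\mapsto G+q^+\ast dF$ is a contraction on $\mathbb L_\infty([0,T])$ with constant $F(T)<1$ (by \eqref{eq:45}) whose Picard iterates preserve order, and $G$ is itself coordinatewise nondecreasing in $(x,\phi)$ since $x\mapsto x^+$ and $x\mapsto -x^-$ are nondecreasing and $1-F(t),1-F^{(0)}(t)\ge0$; hence $X^{(-a,-\rho)}\le X^{(x,\phi)}\le X^{(a,\rho)}$ for $(x,\phi)\in B$, with $a=\sup_{(x,\phi)\in B}\abs x$ and $\rho(t)=\sup_{(x,\phi)\in B}\abs{\phi(t)}\in\bbC_0(\R_+,\R)$, and it suffices to run (i)--(iii) for these two extreme data.

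For (i), the terms $(1-F(t))x^+$, $(1-F^{(0)}(t))x^-$, $\phi(t)$ vanish ($F(t)\to1$, $F^{(0)}(t)\to1$, $\phi\in\bbC_0$) while $\beta F^{(0)}(t)\to\beta$. For the Wiener term, writing it as $\sigma\int_0^t(1-F(u))\dot W(t-u)\,du$ and splitting the integral at a level $L$, Cauchy--Schwarz bounds the head by $\bl(\int_0^L(1-F(u))^2\,du\br)^{1/2}\bl(\int_{t-L}^\infty\dot W(s)^2\,ds\br)^{1/2}\to0$ as $t\to\infty$, and the tail by $\bl(\int_L^\infty(1-F(u))^2\,du\br)^{1/2}\bl(\int_0^\infty\dot W(s)^2\,ds\br)^{1/2}\to0$ as $L\to\infty$ uniformly in $t$, using $\int_0^\infty(1-F(u))^2\,du\le\mu^{-1}$ and the finiteness of the Wiener action.

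The Kiefer term $\mathcal K(t)$ is where I expect the real work, and its treatment is the novel point. Substituting $v=F(x')$, $\tau=\mu s$ gives $\mathcal K(t)=\int_0^1\int_0^{\mu t}\ind{v\le F(t-\tau/\mu)}\dot K(v,\tau)\,dv\,d\tau$. The tie-down $K(0,\cdot)=K(1,\cdot)=K(\cdot,0)=0$ forces $\int_0^1\int_0^{\mu t}\dot K(v,\tau)\,dv\,d\tau=0$, so $\mathcal K(t)=-\int_0^1\int_0^{\mu t}\ind{v>F(t-\tau/\mu)}\dot K(v,\tau)\,dv\,d\tau$, and by Cauchy--Schwarz $\mathcal K(t)^2\le\bl(\int_0^{\mu t}(1-F(t-\tau/\mu))\,d\tau\br)\bl(\int_0^1\int_0^{\mu t}\ind{v>F(t-\tau/\mu)}\dot K(v,\tau)^2\,dv\,d\tau\br)$. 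The first factor equals $\mu\int_0^t(1-F(\sigma))\,d\sigma=F^{(0)}(t)\le1$ by \eqref{eq:73}; the second tends to $0$ by dominated convergence, since its integrand is dominated by $\dot K(v,\tau)^2\in L^1([0,1]\times\R_+)$ and, for each $v<1$, equals $0$ once $F(t-\tau/\mu)>v$. Hence $\mathcal K(t)\to0$, so $G(t)\to-\beta$, and since every term of $G$ is bounded, $\sup_t\abs{G(t)}<\infty$.

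For (ii), $X(t)^+\le G(t)^+ +\int_0^t X(t-s)^+\,dF(s)$; iterating and comparing with $Z=G^+ +Z\ast dF$ (the renewal equation of Appendix \ref{renewal}), one gets $X(t)^+\le(G^+\ast dU)(t)$ with $U=\sum_{n\ge0}dF^{*n}$, which is bounded because $G^+$ is bounded and, as $G(t)\to-\beta<0$, eventually vanishing, while $F$ has finite mean $\mu^{-1}$ (elementary renewal theorem); and $X(t)\ge G(t)$ is bounded below. For (iii), $X(t)\ge G(t)$ gives $\liminf_{t\to\infty}X(t)\ge-\beta$ at once. For the reverse, fix $\epsilon\in(0,\beta/2)$, put $L=\limsup_{t\to\infty}X(t)<\infty$, pick $T_1$ with $X(u)\le L+\epsilon$ and $G(u)\le-\beta+\epsilon$ for $u\ge T_1$, and split $\int_0^t X(t-s)^+\,dF(s)$ at $t-T_1$: the part with $t-s\ge T_1$ is $\le(L+\epsilon)^+F(t-T_1)$, the part with $t-s<T_1$ is $\le\bl(\sup_{[0,T_1]}X^+\br)(F(t)-F(t-T_1))\to0$; letting $t\to\infty$ yields $L\le-\beta+\epsilon+(L+\epsilon)^+$, which is incompatible with $L+\epsilon>0$ (it would give $0\le-\beta+2\epsilon$), so $L\le-\beta+\epsilon$, and $\epsilon\downarrow0$ gives $L\le-\beta$. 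Thus $X(t)\to-\beta$, uniformly over $B$ by the monotone sandwich. The main obstacle throughout is the Kiefer term: the decisive step is using $K(0,\cdot)=K(1,\cdot)=0$ to cancel the bulk and reduce $\mathcal K(t)$ to an integral over $\{v>F(t-\tau/\mu)\}$, a set of area $F^{(0)}(t)\le 1$.
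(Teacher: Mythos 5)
Your proposal is correct and takes essentially the same route as the paper: rewrite the defining equation for $X$ as a nonlinear renewal equation $X(t)=f(t)+\int_0^t X(t-s)^+\,dF(s)$, show the Wiener term tends to $0$ by Cauchy--Schwarz and dominated convergence, reduce the Kiefer term via the tie-down $\int_0^1\dot K(v,\cdot)\,dv=0$ to an integral over $\{v>F(t-s)\}$ (whose mass $F^{(0)}(t)\le 1$ is controlled and whose integrand vanishes by dominated convergence), and then invoke the renewal-limit fact that $f(t)\to-\beta<0$ forces $X(t)\to-\beta$, which is exactly Lemma \ref{le:renewal_limit} of the appendix. The only place you go beyond the paper's write-up is the uniformity over a compact $B\subset\R\times\bbC_0(\R_+,\R)$: the paper applies Lemma \ref{le:renewal_limit} and leaves the uniformity implicit (the Wiener and Kiefer forcing do not depend on $(x,\phi)$ and the deterministic part converges to $-\beta$ uniformly on $B$), whereas you make it explicit by combining the paper's Lemma \ref{le:monot} with the monotonicity of $x\mapsto(1-F(t))x^+-(1-F^{(0)}(t))x^-$ and of $\phi\mapsto\phi(t)$ to sandwich every $(x,\phi)\in B$ between two fixed extreme data $(\pm a,\pm\rho)$; that is a clean, self-contained way to deliver the uniformity and is a worthwhile addition, but the underlying argument is the same. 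One minor cosmetic difference: for the Wiener term you split the integration range at a level $L$ and apply $\|1-F\|_{L^2}\cdot\|\dot W\|_{L^2}$ twice, while the paper factors the weight as $(1-F)^{1/2}\cdot(1-F)^{1/2}$ and gets the conclusion in one dominated-convergence step; both are valid.
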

\begin{proof}
By Lemma \ref{le:renewal_limit}, it suffices to
  prove that, for $\Pi$--almost every trjectory $(w,k)$ of $(W,K)$\,,
  \[
  \lim_{t\to\infty}\bl(\int_0^t\bl(1-F(t-s)\br)\sigma\,\dot w(s)\,ds
+\int_{\R_+^2} \ind{x'+s\le t}\,\dot k(\mu s,F(x'))\,dF(x')\,\mu \,ds\br)=0\,.
  \]
By the Cauchy--Schwarz inequality and 
 the fact that $\int_0^\infty(1-F(s))\,ds=1/\mu,$
\begin{multline*}
  \abs{\int_0^t\bl(1-F(t-s)\br)\sigma\,\dot w(s)\,ds}\le
\sqrt{\int_0^t\bl(1-F(t-s)\br)\,ds}
\sqrt{\int_0^t\bl(1-F(t-s)\br)\sigma^2\dot w(s)^2\,ds}\\
\le\mu^{-1/2}\sigma\sqrt{\int_0^t\bl(1-F(t-s)\br)\dot w(s)^2\,ds}\,.
\end{multline*}
Since $1-F(t-s)\to0$\,, as $t\to\infty$\,, and
$\int_0^\infty\dot w(s)^2\,ds<\infty$ $\Pi$-a.e., by  dominated convergence,
\[
  \lim_{t\to\infty}\int_0^t\bl(1-F(t-s)\br)\sigma\,\dot w(s)^2\,ds=0\,.
\]
If $(k(x',t))$ is a trajectory of the Kiefer idempotent  process $K$\,, then
$0=k(1,\mu t)=\int_0^t\int_0^1\dot k(x',\mu s)dx'\mu ds$\,.
Hence,
\begin{multline}
  \label{eq:44}
      \int_{\R_+^2} \ind{x'+s\le t}\,\dot k(F(x'),\mu s)
\,dF(x')\,\mu\,ds=
  \int_0^{ t}\int_0^{F(t-s)}\dot k(x',\mu s)\,dx'\,\mu\,ds
\\=-\int_0^{ t}\int_{F(t-s)}^1\dot k(x',\mu s)\,dx'\,\mu\,ds=
-\int_0^{ t}\int_0^1\ind{ x'\ge F(t-s)}\dot k(x',\mu s)\,dx'\,\mu\,ds\,.
\end{multline}
By the Cauchy--Schwarz inequality,
\begin{multline*}
    \bl(\int_{[0,1]\times\R_+}\ind{s\le t}\ind{ x'\ge F(t-s)}\dot
  k(x',\mu s)\,dx'\,\mu\,ds\br)^2 \\
\le \int_{[0,1]\times\R_+}\ind{s\le t}\ind{ x'\ge F(t-s)}\dot k(x',\mu
s)^2\,dx'\,\mu\,ds\,.
\end{multline*}

Since $\ind{s\le t}\ind{x'\ge F(t-s)}\to0$\,, as $t\to\infty$\,, unless
$x'=1$\,,
and $\int_{[0,1]\times\R_+}\dot
  k(x',s)^2\,dx'\,ds<\infty$ $\Pi$-a.e., by dominated convergence,
$\int_{[0,1]\times\R_+}\ind{s\le t}\ind{x'\ge F(t-s)}\dot
  k(x',\mu s)^2\,dx'\,\mu\,ds\to0$\,, as $t\to\infty$\,,  $\Pi$-a.e.
It follows that\,, $\Pi$-a.e.,
\[
  \lim_{t\to\infty}\int_{\R_+^2} \ind{x'+s\le t}\,\dot k(F(x'),\mu s)\,dF(x') \,\mu\,ds=0\,. 
\]
\end{proof}
  \begin{lemma}
  \label{le:idemp_meas_limit}
For $y\in\R$\,,  the  limit in \eqref{eq:10'} exists.
Furthermore, 
for any bounded continuous nonnegative function $f$ on $\R$\,, 
uniformly over  $(x,\phi)$ from compact
subsets of $\R\times\bbC_0(\R_+,\R)$\,,
\begin{equation}
  \label{eq:11''}
    \lim_{t\to\infty}
\sup_{y\in\R}f(y)\Pi_{x,\phi,t}(y)=
\sup_{y\in\R}f(y)\hat\Pi(y)\,.
\end{equation}
\end{lemma}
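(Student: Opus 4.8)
The plan is to establish, in order, the existence of the limit $\hat\Pi(y)=\lim_{t\to\infty}\hat\Pi_t(y)$ and then the two-sided estimate \eqref{eq:11''}; combining the resulting $\liminf$ and $\limsup$ bounds gives \eqref{eq:11''} uniformly over $B$. For the limit I would show that $t\mapsto\hat\Pi_t(y)$ is nondecreasing, whence, being bounded by $1$, it converges. Given $s,t>0$, pick $(w_1,k_1)$ attaining $\hat\Pi_t(y)=\Pi^W(w_1)\Pi^K(k_1)$ with $\hat X(w_1,k_1)(t)=y$, and prepend a null segment: set $w\equiv0$ and $k\equiv0$ on $[0,s]$ (on $\{u\le\mu s\}$ for the time coordinate of $K$) and $w(u)=w_1(u-s)$, $k(\cdot,u)=k_1(\cdot,u-\mu s)$ afterwards. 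Since $\beta>0$ the zero-noise solution of \eqref{eq:137} is $\equiv-\beta$, so $\hat X(w,k)$ stays at $-\beta$ on $[0,s]$, and because $(-\beta)^+=0$ the convolution $\int_0^{\cdot}\hat X(\cdot-u)^+\,dF(u)$ receives no contribution from the flat part; hence on $[s,s+t]$ the path agrees with $\hat X(w_1,k_1)(\cdot-s)$, so $\hat X(w,k)(s+t)=y$. As $I^W$ and $I^K$ are invariant under this shift-and-prepend, $\hat\Pi_{s+t}(y)\ge\Pi^W(w)\Pi^K(k)=\hat\Pi_t(y)$.

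For the lower bound in \eqref{eq:11''}, fix $r>0$ and $y\in\R$ and let $(w_1,k_1)$ attain $\hat\Pi_r(y)$. For $t>r$ I would take $(w,k)$ on $[0,t]$ equal to $\mathbf0$ on $[0,t-r]$ and to the time-shift of $(w_1,k_1)$ on $[t-r,t]$. On $[0,t-r]$ the solution $X=X(\cdot;x,\phi,w,k)$ coincides with the zero-noise path $q_0:=X(\cdot;x,\phi,\mathbf0,\mathbf0)$, which by Lemma \ref{le:renewal_limit} (or Lemma \ref{le:limit_dynamics} applied to the null trajectory) tends to $-\beta$ uniformly over $(x,\phi)$ in the compact $B$; thus $q_0$ is bounded by some $C_B$ on $\R_+$ and is negative on $[\tau_1,\infty)$ for some $\tau_1=\tau_1(B)$, so $q_0^+$ vanishes there. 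Consequently, for $s\in[t-r,t]$ the equation for $X(s)$ differs from the one for $\hat X(w_1,k_1)(s-(t-r))$ only by the additive driver discrepancy $\bigl(1-F(s)\bigr)x^+-\bigl(1-F^{(0)}(s)\bigr)x^-+\phi(s)-\beta F^{(0)}(s)+\beta+\int_{s-(t-r)}^{s}q_0(s-u)^+\,dF(u)$, in which the first four terms plus $\beta$ tend to $0$ uniformly over $B$ (here $\phi\in\bbC_0$ and $B$ compact) while the integral is at most $C_B\bigl(1-F(t-r-\tau_1)\bigr)\to0$. Stability of the nonlinear renewal equation over the fixed window $[0,r]$ (Lemma B.1 in \cite{PuhRee10}; cf.\ Appendix \ref{renewal}) then gives $X(t)=y_t$ with $\sup_{(x,\phi)\in B}\abs{y_t-y}\to0$, uniformly in $y$. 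Hence $\Pi_{x,\phi,t}(y_t)\ge\hat\Pi_r(y)$, so $\sup_z f(z)\Pi_{x,\phi,t}(z)\ge f(y_t)\hat\Pi_r(y)$; taking the supremum over $y$ and using that $\{\hat\Pi_r\ge c\}$ is compact for $c>0$ while $f$ is continuous, then letting $r\to\infty$ with $\hat\Pi_r\uparrow\hat\Pi$, yields $\liminf_{t\to\infty}\inf_{(x,\phi)\in B}\sup_z f(z)\Pi_{x,\phi,t}(z)\ge\sup_y f(y)\hat\Pi(y)$.

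For the upper bound I would run the coupling in reverse. Given $\epsilon>0$, since $f$ is bounded it is enough to treat $z$ with $\Pi_{x,\phi,t}(z)\ge\gamma$ for a suitable $\gamma=\gamma(\epsilon)>0$; choose $(w,k)$ with $\Pi^W(w)\Pi^K(k)\ge\gamma$ and $X(\cdot;x,\phi,w,k)(t)=z$, so $(w,k)$ lies in the compact set $G_\gamma=\{(w,k):I^W(w)+I^K(k)\le\ln(1/\gamma)\}$. The key intermediate claim is that for every $\delta>0$ there is $T=T(B,\gamma,\delta)$ with $\abs{X(s;x,\phi,w,k)+\beta}<\delta$ for all $s\ge T$, all $(x,\phi)\in B$, and all $(w,k)\in G_\gamma$. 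Granting it, for $t\ge T+r$ I would cut at time $t-r$, where $X(t-r)$ lies within $\delta$ of $-\beta$, and let $\hat X$ run from $(-\beta,\mathbf0)$ driven by the time-shifted restriction of $(w,k)$ to $[t-r,t]$; as in the previous paragraph (and after also sending $\delta\to0$) this gives $\hat X(r)=z_t$ with $\sup_{(x,\phi)\in B}\abs{z_t-z}\to0$, while truncating the past of $(w,k)$ only decreases $I^W$ and $I^K$, so the truncated noise has deviability at least $\Pi^W(w)\Pi^K(k)$. Hence $\Pi_{x,\phi,t}(z)\le\Pi^W(w)\Pi^K(k)\le\hat\Pi_r(z_t)\le\hat\Pi(z_t)$, and passing to $\sup_z f(z)(\cdot)$ with the continuity of $f$ completes the argument. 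The main obstacle is the uniform-settling claim: Lemma \ref{le:limit_dynamics} only supplies $\Pi$-almost everywhere convergence $X(s)\to-\beta$ (equivalently, trajectorywise convergence on each compact sublevel set of the noise rate function), and upgrading this to uniformity in $s\ge T$ must contend with the solution map $(x,\phi,w,k)\mapsto X$ being continuous only in the topology of uniform convergence on compact time intervals; I would resolve this by a subsequence/contradiction argument that in addition exploits the global-in-time a priori bounds on $X$ over $G_\gamma\times B$ — these following from the Cauchy--Schwarz bounds on the Wiener and Kiefer driving terms already used in the proof of Lemma \ref{le:limit_dynamics}, together with the decay of $1-F$ — so that the tail of $X$ stays controlled once the trajectory is near $-\beta$.
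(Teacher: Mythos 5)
Your monotonicity argument for the existence of $\lim_t\hat\Pi_t(y)$ is correct and matches the paper's, and the $\liminf$ bound (prepending a zero-noise block, letting the initial condition wash out via Lemma \ref{le:renewal_limit} and the $1-F$ decay, and invoking stability of the renewal equation on a fixed window) is also sound and close in spirit to the paper's.

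The upper (``$\limsup$'') bound, however, has a genuine gap, and it is the one you flagged. The ``uniform-settling claim'' --- that there is $T=T(B,\gamma,\delta)$ with $|X(s;x,\phi,w,k)+\beta|<\delta$ for \emph{all} $s\ge T$, all $(x,\phi)\in B$ and all $(w,k)$ in the compact sublevel set $G_\gamma$ --- is false, not merely hard to prove. The cost functionals $I^W,I^K$ are invariant under time-translation of the control: if $(w,k)\in G_\gamma$ drives $X$ from $-\beta$ to some $z\ne-\beta$ over a unit interval, then the shifted controls $(w_{T_0},k_{T_0})$ with $\dot w_{T_0}(s)=\dot w(s-T_0)\mathbf 1_{\{s\ge T_0\}}$ (and similarly for $k$) lie in $G_\gamma$ for every $T_0$, yet drive $X$ to a neighbourhood of $z$ around time $T_0+1$; letting $T_0\to\infty$ shows the settling time is unbounded over $G_\gamma$. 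No subsequence/contradiction argument will salvage this, because the failure is not a topological subtlety but reflects the actual mechanism by which nonzero quasipotential values arise: the process sits near $-\beta$ essentially for free and then spends cost late in time. Lemma \ref{le:limit_dynamics} gives convergence to $-\beta$ for each fixed trajectory, but the time after which the trajectory stays close cannot be chosen uniformly over a cost-level set.

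The paper sidesteps this by using only the \emph{first} time $T_{\epsilon,u}(q)$ at which the trajectory enters an $\epsilon$-neighbourhood of $-\beta$ and remains there for a window of fixed length $u$; this entry time \emph{is} uniformly bounded over $q\in K_{x,\phi}$ and $(x,\phi)\in B$, by compactness and upper semicontinuity. After that window the paper does not ask the trajectory to stay near $-\beta$: instead it compares $q$ with an auxiliary $q_\epsilon$ obtained by resetting the initial value to $\pm\epsilon-\beta$ and using the monotonicity Lemma \ref{le:monot}, then builds $\hat q_\epsilon$ starting at $-\beta$ that sits at $-\beta$ until time $T_{\epsilon,u}(q)+u-1$, makes a cheap transition (cost $O(\epsilon^2)$) to $\pm\epsilon-\beta+\tilde\Delta(0)$ over a unit interval, and thereafter follows $q_\epsilon$. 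The sandwiching and the intermediate value theorem then locate a time $\hat t_\epsilon\le t$ with $\hat q_\epsilon(\hat t_\epsilon)=y$, and monotonicity of $\hat\Pi_t(y)$ in $t$ closes the estimate. To repair your argument you would need to replace the uniform-settling claim with something of this flavour --- a uniformly bounded \emph{hitting} time for a window, followed by a comparison/monotonicity device that tolerates arbitrary behaviour of the trajectory after the window --- rather than trying to bound the entire tail of the trajectory.
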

\begin{proof}
Note that 
 $\hat\Pi_{t}(y)$ is a nondecreasing function of $t$ because
   ''sitting at $-\beta$ incurs zero cost''. In more detail,
 given $T>0$\,,
 one can associate with $\hat X$\,,  $W$\,,
 and $K$ a delayed trajectory 
$(\hat X_T,W_T,K_T)$ such that
$\hat X_T(t)=-\beta\ind{t< T}+\ind{t\ge T}\hat X(t-T)$\,,
$\dot W_T(t)=\ind{t\ge T}\dot W(t-T)$\,, 
 and
$\dot K_T(x',\mu t)=\ind{t\ge T}\dot K(x',\mu(t-T))$\,.
  Then, \eqref{eq:137} holds
for
$\hat X_T(t)$\,,  $W_T(t)$ and $K_T(F(x'), t)$\,.\footnote{In
some  more
  detail, 
$\int_0^t(1-F(t-s))\dot W_T(s)\,ds=
\int_T^t(1-F(t-s))\dot W(s-T)\,ds=
\int_0^{t-T}(1-F(t-s-T))\dot W(s)\,ds$
and $\int_0^t\hat X_T(t-s)^+\,dF(s)=\int_0^{t-T}\hat X(t-T-s)^+\,dF(s)
$\,.
Also, 
$\int\ind{x'+s\le t}\dot{k}_T(F(x'),\mu s)\,dF(x')\,ds=
\int_0^t\int_0^{F(t-s)}\dot{k}_T(u,\mu s)\,du\,ds=
\int_T^t\int_0^{F(t-s)}\dot{k}(u,\mu(s-T))\,du\,ds=
\int_0^{t-T}\int_0^{F(t-T-s)}\dot{k}(u,\mu s)\,du\,ds=
\int\ind{x'+s\le t-T}\dot{k}(F(x'),\mu s)\,dF(x')\,ds$\,.}
 In
addition,
\[
\int_0^\infty \dot w_T(t)^2\,dt\le\int_0^\infty \dot w(t)^2\,dt
\text{ and }\int_0^\infty\int_0^1 \dot k_T(x',t)^2\,dx'\,dt\le
\int_0^\infty\int_0^1 \dot k(x',t)^2\,dx'\,dt
\]
 so that $\Pi(\hat X=q)\le\Pi(\hat X=q_T)$\,, where $q_T(t)=q(t-T)\ind{t\ge
   T}$\,.
Hence,  
$\hat \Pi_{t+T}(y)\ge
\hat  \Pi_{t}(y)$\footnote{$\hat  \Pi_{t}(y)=\Pi(\hat X(t)=y)=
\sup_{q:\,q(t)=y}\Pi(\hat X=q)\le
\sup_{q:\,q(t)=y}\Pi(\hat X=q_T)=
\sup_{q:\,q_T(T+t)=y}\Pi(\hat X=q_T)\le \sup_{q:\,q(T+t)=y}\Pi(\hat X=q)=
\Pi(\hat X(T+t)=y)=\hat\Pi_{T+t}(y)$\,.}, which implies the existence of the
 limit in \eqref{eq:10'}.

The limit \eqref{eq:11''}
 is proved via a coupling argument. 
Let 
$\kappa>0$ and $B$ represent a compact subset of $\R\times\bbC_0(\R_+,\R)$\,. For
  $(x,\phi)\in B$\,, let  
$K_{x,\phi}=\{q\in\bbC(\R_+,\R):\, \Pi_{x,\phi}(q)\ge\kappa\}$\,.
The set $K_{x,\phi}$ is compact. Furthermore, the set $\cup_{(x,\phi)\in B}K_{x,\phi}$ is
compact too.
 For $\epsilon>0$\,, $u>0$ and 
$q\in\bbC(\R_+,\R)$\,, let $T_{\epsilon,u}(q)=\inf\{s:\,
\abs{q(s')+\beta}\le\epsilon\text{ for all }s'\in[s,s+u]\,\}$\,.  By Lemma \ref{le:limit_dynamics},
$T_{\epsilon,u}(q)<\infty$  when $q\in K_{x,\phi}$\,.
Let $\tilde T_{\epsilon,u}(x,\phi)=\sup_{q\in K_{x,\phi}}T_{\epsilon,u}(q)$\,. 
By a similar argument as in the proof of part 1 of Lemma 3.1 in
Puhalskii \cite{Puh23a}, the latter supremum is attained and
 the function $\tilde T_{\epsilon,u}(x,\phi)$
is upper semicontinuous in $(x,\phi)$\,. Let $\hat T_{\epsilon,u}=\max_{(x,\phi)\in B}\tilde
T_{\epsilon,u}(x,\phi)$\,.

It is proved, first, that,
given arbitrary $\kappa>0$\,, there exists 
$\epsilon>0$ such that, for all $(x,\phi)\in B$\,, all $y$ and 
$t\ge \hat T_{\epsilon,u}+u$\,, 
\begin{equation}
  \label{eq:232}
  \Pi_{x,\phi,t}(y)\le \hat\Pi_{t}(y)+\kappa\,.
\end{equation}
Suppose  that $\Pi_{x,\phi,t}(y)\ge\kappa$
and let $q$ represent a trajectory such that 
\begin{multline}
  \label{eq:114}
            q(s)=(1-F(s))x^+ -(1-F^{(0)}(s))x^-+\phi(s)-\beta F^{(0)}(s)
+\int_0^sq(s-s')^+\,dF(s')
\\
+\sigma\int_0^s\bl(1-F(s-s')\br)\,\dot w(s')\,ds'
+\int_0^s\int_{0}^s \ind{x'+s'\le s}\,\dot k(F(x'),\mu s')\,dF(x')\,\mu \,ds'\,,
\end{multline}
for certain $w$ and $k$\,, $q(t)=y$\,, 
and $\Pi(q)=
\Pi_{x,\phi,t}(y)$\,.
Introduce  
\[
\Delta(s)= \sigma\int_0^s\bl(1-F(s-s')\br)\dot w(s')\,ds'
+\int_{\R_+^2} \ind{x'+s'\le s}\,\dot k(F(x'),\mu s')\,dF(x')\,\mu
\,ds'\]
and, for $s\ge T_{\epsilon,u}(q)$\,,
\begin{multline}
  \label{eq:91}
  \varrho         (s)
=(1-F(s))x^+-(1-{F^{(0)}}(s))(x^--\beta)+\phi(s)\\
+\int_{s- T_{\epsilon,u}(q)}^sq(s-s')^+\,dF(s')\,.
\end{multline}
By \eqref{eq:114}, on noting that $q(s')^+=0$ on $[
T_{\epsilon,u}(q), T_{\epsilon,u}(q)+u]$\,, provided
$\epsilon<\beta$\,,
so that $\int_{s-( T_{\epsilon,u}(q)+u)}^{s- T_{\epsilon,u}(q)}q(s-s')^+\,dF(s')=0$\,, for $s\ge  T_{\epsilon,u}(q)+u$\,,
\begin{equation}
  \label{eq:84}
q(s)=\varrho(s)-\beta  +\int_0^{s-(
  T_{\epsilon,u}(q)+u)}q(s-s')^+\,dF(s')
+\Delta(s)\,.
\end{equation}
Let  $\tilde s=s-( T_{\epsilon,u}(q)+u)$\,. Introduce
\begin{equation}
  \label{eq:4'}
   \tilde q(\tilde s)=q(s)\,,\;
\tilde\varrho(\tilde s)=\varrho(s) \,,\;
 \tilde \Delta(\tilde s)=\Delta(s)\,.
\end{equation}

By \eqref{eq:84}, for $ s\ge  T_{\epsilon,u}(q)+u$\,,
\begin{equation}
  \label{eq:93}
   \tilde q( \tilde s)=\tilde \varrho(\tilde s)-\beta+
\int_0^{\tilde s} \tilde q(\tilde
  s-s')^+\,dF(s')+ \tilde \Delta(\tilde s)\,.
\end{equation}
Note that, for $s\ge T_{\epsilon,u}(q)+u$\,,
\[
  \int_{s- T_{\epsilon,u}(q)}^sq(s-s')^+\,dF(s')\le
\int_{u}^s q(s-s')^+\,dF(s')\le\sup_{s'\in\R_+}q(s')^+(1-F(u))\,.
\]
Since the latter $\sup$ is  bounded over $(x,\phi)\in B$
by $q(s')$ converging to $-\beta$ uniformly,
 as $s'\to\infty$\,,  (by Lemma \ref{le:limit_dynamics}), 
uniformly over $(x,\phi)\in  B$\,,
\[
  \lim_{u\to\infty}\sup_{s\ge T_{\epsilon,u}(q)+u}
 \int_{s- T_{\epsilon,u}(q)}^sq(s-s')^+\,dF(s')=0\,.
\]
Hence, by \eqref{eq:91} and the fact that $s=\tilde
s+T_{\epsilon,u}(q)+u$\,,
uniformly over $(x,\phi)\in B$\,,
\begin{equation}
  \label{eq:66}
      \lim_{u\to\infty}\sup_{\tilde s\in\R_+}
\tilde\varrho(\tilde s)=0\,.
\end{equation}
 It is also noteworthy that, since $\tilde
 q(0)=q(T_{\epsilon,u}(q)+u)$ is within $\epsilon$ to $-\beta$\,, it
 may be assumed, owing to \eqref{eq:93}, that
$   \abs{\tilde\Delta(0)}\le 2\epsilon\,.$

Let 
\begin{equation}
  \label{eq:26}
    \tilde q_\epsilon(\tilde s)=
\pm\epsilon-\beta+\int_0^{\tilde s}\tilde q_\epsilon(\tilde
  s-s')^+\,dF(s')+ \tilde \Delta(\tilde s)\,,
\end{equation}
where a plus or a minus sign is chosen 
according as $y\gtrless -\beta$\,.
(Since $\hat\Pi_{t}(-\beta)=1$\,,  \eqref{eq:232}
needs proof only when $y\not=-\beta$\,.)
The existence and uniqueness  of an essentially locally bounded
solution $\tilde q_\epsilon(t)$ to \eqref{eq:26} follows from Lemma B.2 in Puhalskii and
Reed \cite{PuhRee10}.
Define, accordingly, $q_\epsilon(s)=\tilde q_\epsilon( s -T_{\epsilon,u}(q)-u)$\,.
By \eqref{eq:93}, \eqref{eq:66},  and Lemma
\ref{le:monot}, for $u$ great enough,
$\tilde q(\tilde s)\lesseqgtr \tilde q_\epsilon(\tilde s)$ so that,
for $s\ge T_{\epsilon,u}(q)+u$\,, 
\begin{equation}
  \label{eq:177}
   q( s)\lesseqgtr  q_\epsilon( s)\,.
\end{equation}

Now a coupling of $ q_\epsilon$  with a trajectory emanating from
$-\beta$ is produced. The latter stays put until time
$T_{\epsilon,u}(q)+u-1$ and
gets 
 to $\pm\epsilon-\beta+\tilde\Delta(0) $ at $ T_{\epsilon,u}(q)+u $\,.
 More specifically, let
 \begin{multline}
   \label{eq:51}
\hat q_{\epsilon}(s)=
    -\beta\ind{s\in[0, T_{\epsilon,u}(q)+u-1]}\\+
(-\beta+\tilde\Delta(0)\pm\epsilon)
\displaystyle
\frac{ \int_{ T_{\epsilon,u}(q)+u-1}^{s}
(1-F(s-s'))ds'}
{\int_{0}^{1}
(1-F(s'))\,ds'}\ind{s\in[ T_{\epsilon,u}(q)+u-1, T_{\epsilon,u}(q)+u]}\\+
 q_\epsilon(s)\ind{s\ge T_{\epsilon,u}(q)+u}\,,
 \end{multline}
\begin{subequations}
  \begin{align}
   \label{eq:155}
  \dot{\hat w}(s)=&(\tilde\Delta(0)\pm\epsilon)\frac{\ind{s\in
[ T_{\epsilon,u}(q)+u-1, T_{\epsilon,u}(q)+u]}}{\sigma\int_{0}^{1}
(1-F(s'))\,ds'}+
\dot w(s)\ind{s\ge T_{\epsilon,u}(q)+u}\,,\\\notag\text{ and } \\  \label{eq:155''}
  \dot{\hat k}(F(x'),\mu s)=&\,
\dot k(F(x'),\mu s)\ind{s\ge T_{\epsilon,u}(q)+u}\,.
\end{align}
\end{subequations}
Note that $\hat q_{\epsilon}(s)\le0 $\,, for $s\le
T_{\epsilon,u}(q)+u$\,, provided $\epsilon$ is small enough.
Owing to \eqref{eq:26}, \eqref{eq:51}, \eqref{eq:155} and \eqref{eq:155''},
\begin{multline*}
  \hat  q_\epsilon(s)=-\beta+\int_0^s\hat q_{\epsilon}(s-s')^+\,dF(s')
+\int_0^s\bl(1-F(s-s')\br)\sigma\,\dot {\hat w}(s')\,ds'
\\+\int_{\R_+^2} \ind{x'+s'\le s}\,\dot{\hat k}(F(x'),\mu s')\,dF(x')\,\mu \,ds'
\,.
\end{multline*}

By the definition of $I^Q(q)$ in \eqref{eq:2} and \eqref{eq:1}, by 
 \eqref{eq:155} and \eqref{eq:155''}, for $\epsilon$ small enough,
\begin{multline*}
  I^Q(\hat q_{\epsilon})\le \frac{1}{2}\bl(
\int_0^\infty \dot{\hat w}(s)^2\,ds
+\int_0^\infty\int_0^1 \dot{\hat  k}(x',s)^2\,dx'\,ds\br)
\\\le \frac{1}{2}\bl(
\frac{(\tilde\Delta(0)\pm\epsilon)^2}{\int_0^1(1-F(s'))\,ds'}+\int_0^\infty \dot{ w}(s)^2\,ds
+\int_0^\infty\int_0^1 \dot{  k}(x',s)^2\,dx'\,ds\br)\le
\kappa+I^Q( q)\,.
\end{multline*}
Hence, $\Pi(\hat q_{\epsilon})\ge e^{-I^Q(\hat q_{\epsilon})}\ge  e^{-I^Q(q)}e^{-{\kappa}}
=\Pi(q)e^{-{\kappa}}\ge \Pi(q)-\kappa$ so that
$\Pi_{x,\phi,t}(y)=\Pi(q)\le \Pi(\hat q_{\epsilon})+\kappa
$\,. 

On recalling that $q(t)=y$\,, 
the inequality in \eqref{eq:177} yields the inequality
\[
  y\lesseqgtr  
\hat q_\epsilon(t)\,.
\]
Hence, $\hat q_\epsilon(\hat t_\epsilon)=y$\,, for some $\hat
t_\epsilon\le t$\,.
By monotonicity, 
$\hat \Pi_t(y)\ge \hat\Pi_{\hat t_\epsilon}(y)\ge\Pi(\hat
q_\epsilon)\ge\Pi_{x,\phi,t}(y)-\kappa$\,, implying \eqref{eq:232}.
It follows that, for all  sets $F\subset \bbC(\R_+,\R)$\,,
\begin{equation}
  \label{eq:61}
  \limsup_{t\to\infty}\sup_{(x,\phi)\in B}\Pi_{x,\phi,t}(F)\le\hat\Pi(F)\,.
\end{equation}

It is now proved that,
given arbitrary $\kappa>0$\,, $t>0$\,,  $\delta>0$\, and $y\in\R$\,,
  uniformly over $(x,\phi)\in B$\,,
\begin{equation}
  \label{eq:232'}
 \hat\Pi_{t}(y)\le
\liminf_{t'\to\infty}
 \Pi_{x,\phi,t'}(B_\delta(y))+\kappa\,,
\end{equation}
where $B_\delta(y)$ represents the open $\delta$--ball about $y$\,.
Let $\hat q$ represent a trajectory such that $\hat q(0)=-\beta$\,, $\hat
q(t)=y$ and $\Pi(\hat q)=\hat\Pi_t(y)\ge\kappa$\,. It solves
\begin{equation}
  \label{eq:64}
  \hat q(s)= -\beta
+\int_0^s\hat q(s-s')^+\,dF(s')
+\hat\Delta(s)\,,
\end{equation}
where
\begin{multline*}
  \hat\Delta(s)=
\sigma\int_{0}^s\bl(1-F(s-s')\br)\,\dot{\hat w}(s')\,ds'\\
+\int_{0}^s\int_{0}^s \ind{x'+s'\le s}\,\dot{\hat k}(F(x'),\mu
s')\,dF(x')\,
\mu \,ds'\,.
\end{multline*}
Let $ \overline{q}$ solve \eqref{eq:114} with $\dot w(s')=\dot
k(x',s')=0$\,.
Let $\overline{q}'(s)=\overline{q}(s)$\,, for $s\le T_{\epsilon,u}(\overline{q})+u$\,.
Let, for $s\ge T_{\epsilon,u}(\overline{q})+u$\,,
\begin{multline}
  \label{eq:81}
   \overline{q}'(s)=(1-F(s))x^+ -(1-F^{(0)}(s))x^-+\phi(s)-\beta F^{(0)}(s)\\
+\int_0^s\overline{q}'(s-s')^+\,dF(s')+
\hat\Delta_{T_{\epsilon,u}(\overline{q})+u}( s)\,.
\end{multline}
Consequently,
$\Pi(\overline{q}')=\Pi(\hat q)=\hat \Pi_t(y)\ge\kappa$\,.

Let
$\tilde{\overline s}=s-(T_{\epsilon,u}(\overline{q})+u)$\,,
 $\tilde{\overline q}(\tilde{\overline s} )=\overline{q}'(s)$ and
 $\tilde{\hat \Delta}(\tilde{\overline s} )
=\hat\Delta_{T_{\epsilon,u}(\overline{q})+u}( s)$\,.
Then, in analogy with \eqref{eq:93}, provided $\tilde{\overline s} \ge0$\,,
\[
  \tilde{\overline q}(\tilde{\overline s})=\tilde{\overline \varrho}
(\tilde{\overline  s})-\beta
+\int_{0}^{\tilde{\overline s}}\tilde{\overline q}(\tilde{\overline s}-s')^+\,dF(s')
+\hat\Delta(\tilde{\overline s})\,,
\]
where
$\tilde{\overline \varrho}(\tilde{\overline s})=\overline{\varrho}
(s)$ and
 (cf. \eqref{eq:91})
\[
  \overline \varrho         (s)
=(1-F(s))x^+-(1-{F^{(0)}}(s))(x^--\beta)+\phi(s)
+\int_{s- T_{\epsilon,u}(\overline q)}^s\overline q(s-s')^+\,dF(s')\,.
\]
 In analogy with
\eqref{eq:66}  $\sup_{\tilde{\overline s}\in\R_+}\tilde{\overline \varrho}(\tilde{\overline s})\to0$\,, as
$u\to\infty$\,, uniformly over $(x,\phi)\in B$\,.
Hence, for arbitrary $\epsilon>0$ and $u$ great enough,
$\abs{\tilde{\overline\varrho}(\tilde{\overline s})} \le \epsilon$\,. 
On recalling \eqref{eq:64} and \eqref{eq:81},
\[
  \sup_{ s'\le t}\abs{\hat q( s')-\tilde{\overline q}( s')}\le\epsilon
+\sup_{ s'\le  t}\abs{\hat q(s')-\tilde{\overline q}(s')}F(t)\,.
\]
It follows that
\[
  \abs{\hat q(t)-\tilde{\overline q}(t)}\le\frac{\epsilon}{1-F(t)}\,.
\]
Assuming that $\epsilon/(1-F(t))\le\delta$ implies that
\[
  \abs{\hat q(t)-\overline q'(t+(T_{\epsilon,u}(\overline{q})+u))}\le\delta\,.
\]
Thus, for all $u$ great enough and all $(x,\phi)\in B$\,,
 \[\Pi_{x,\phi,T_{\epsilon,u}(\overline{q})+u+ t}(B_\delta(y))\ge
\hat\Pi_t(y)\,.\]
The quantity $T_{\epsilon,u}(\overline{q})$ is seen to be a continuous
function of $u$\,.\footnote{Let $u_n\to u$ and 
$T_{\epsilon,u_n}(\overline{q})\to T$\,. Then $\abs{\overline{q}(s)+\beta}\le \epsilon$ on
$[T,T+u]$ so that $T\ge T_{\epsilon,u}(\overline{q})$\,.
If $T> T_{\epsilon,u}(\overline{q})$\,, then
$T_{\epsilon,u}(\overline{q})<T_{\epsilon,u_n}(\overline{q})-\delta$\,,
for some $\delta>0$ and all $n$ great enough, which contradicts the definition of
$T_{\epsilon,u_n}(\overline{q})$\,. }
The intermediate value theorem implies that, for
any $t'$ great enough, there exists $u'$ such that 
$T_{\epsilon,u'}(\overline{q})+u'+ t=t'$\,,
 so, \eqref{eq:232'} follows.
Letting $t\to\infty$ in \eqref{eq:232'} obtains that,
 for all open  sets $G$\,,
 uniformly over
$(x,\phi)\in B$\,,
\begin{equation}
  \label{eq:60}
  \liminf_{t\to\infty}\Pi_{x,\phi,t}(G)\ge \hat\Pi(G)\,.
\end{equation}

The inequalities \eqref{eq:61} and \eqref{eq:60} imply the assertion
of the lemma.
  \end{proof}
The next lemma concludes the proof of Theorem   \ref{the:idemp_inv}.
\begin{lemma}
  \label{le:stat_LDP}
Under the hypotheses of  Theorem \ref{the:inv},
the sequence of the
  distributions of  $X_n^{(s)}$ LD converges at rate $b_n^2$ to
 $\hat\Pi$\,, the latter set function being a deviability.
\end{lemma}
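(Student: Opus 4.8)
The plan is to run the invariance--relation argument sketched around \eqref{eq:29}--\eqref{eq:20}, with the pair $(X_n(0),\Phi_n)$ in the role of the initial state. By Remark \ref{re:stat_exp} the laws of $X^{(s)}_n$ are exponentially tight of order $b_n^2$ in $\R$, hence, by the large deviation analogue of Prohorov's theorem (Puhalskii \cite{Puh91,Puh93}), LD relatively compact; as the candidate limit $\hat\Pi$ is subsequence--independent, it suffices to show that any subsequential LD limit of the laws of $X^{(s)}_n$ coincides with $\hat\Pi$. Fix a subsequence along which the laws of $X^{(s)}_n$ LD converge, and pass to the stationary version of the process, in which $(X_n(0),\Phi_n)$ is distributed according to the stationary law $\mu^{(s)}_n$ of the number of customers augmented with the centred and normalised empirical cdf of residual service times (as in Section \ref{sec:residual}). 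The $\Phi_n$--component is $\bbC_0$--exponentially tight by Theorem \ref{the:emp_cdf}, so, together with Remark \ref{re:stat_exp}, the sequence $(X_n(0),\Phi_n)$ is $\bbC_0$--exponentially tight in $\R\times\D(\R_+,\R)$; extracting a further subsequence, $(X_n(0),\Phi_n)$ LD converges along it to a deviability $\varpi$ supported on $\R\times\bbC_0(\R_+,\R)$ with $\sup\varpi=1$, whose $\R$--marginal $\varpi_0$ is the LD limit of $X^{(s)}_n$. It remains to show $\varpi_0=\hat\Pi$.

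The next step is to take LD limits in the mixture representation of the invariant law. Given a bounded continuous nonnegative $f$ on $\R$, stationarity gives, for every $t\ge0$,
\[
  E f(X^{(s)}_n)^{b_n^2}=E f(X_n(t))^{b_n^2}
  =\int E\bl(f(X_n(t))^{b_n^2}\,\big|\,(X_n(0),\Phi_n)=(y,\psi)\br)\,\mu^{(s)}_n(dy,d\psi)\,.
\]
On the left, LD convergence of the laws of $X^{(s)}_n$ yields $(1/b_n^2)\ln E f(X^{(s)}_n)^{b_n^2}\to\ln\sup_y f(y)\varpi_0(y)$. On the right, the crucial ingredient is the continuous trajectorial LD convergence of the transient laws: for deterministic initial data $(y_n,\psi_n)\to(x,\phi)$ in $\R\times\bbC_0(\R_+,\R)$, the conditional law of $X_n(t)$ given $(X_n(0),\Phi_n)=(y_n,\psi_n)$ LD converges to $\Pi_{x,\phi,t}$. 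This follows from part 2 of Theorem \ref{the:heavy_traffic} applied with the degenerate (point--mass) initial idempotent data $x$ and $\phi$, the conditional law of $Y_n$ still LD converging to the law of the idempotent process $(\sigma W(t)-\beta\mu t,\,t\in\R_+)$ and the Kiefer term being generated by the post--zero service times, the equilibrium initialisation of the arrival process influencing $Y_n$ only by an LD--negligible amount; in particular it holds for every $(x,\phi)$ with $\varpi(x,\phi)>0$. Feeding this convergence, the LD convergence $\mu^{(s)}_n\to\varpi$, and the exponential tightness above into the large deviations of mixtures of Appendix \ref{cont_conv} shows that the right--hand side has $(1/b_n^2)\ln$ converging to $\ln\sup_{(x,\phi)}\bl(\sup_y f(y)\Pi_{x,\phi,t}(y)\br)\varpi(x,\phi)$. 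Hence, for every $t\ge0$,
\[
  \sup_y f(y)\varpi_0(y)=\sup_{(x,\phi)}\Bl(\sup_y f(y)\Pi_{x,\phi,t}(y)\Br)\varpi(x,\phi)\,.
\]

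It remains to let $t\to\infty$. For $\kappa\in(0,1]$ the level set $\{\varpi\ge\kappa\}$ is compact; over its complement the right--hand side contributes at most $\kappa\sup f$, while over $\{\varpi\ge\kappa\}$ Lemma \ref{le:idemp_meas_limit} gives $\sup_y f(y)\Pi_{x,\phi,t}(y)\to\sup_y f(y)\hat\Pi(y)$ uniformly and $\sup_{\{\varpi\ge\kappa\}}\varpi=1$. Since the left--hand side does not depend on $t$, letting $t\to\infty$ and then $\kappa\to0$ yields $\sup_y f(y)\varpi_0(y)=\sup_y f(y)\hat\Pi(y)$ for every bounded continuous nonnegative $f$; letting $f$ run over continuous approximations to the indicator of an arbitrary point and invoking the upper compactness of $\varpi_0$ and $\hat\Pi$ then forces $\varpi_0=\hat\Pi$. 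Consequently every subsequential LD limit equals $\hat\Pi$, the sequence of laws of $X^{(s)}_n$ LD converges to $\hat\Pi$, and $\hat\Pi$, being the LD limit of an exponentially tight sequence of probability laws, is a deviability. The main obstacle I anticipate is the LD passage to the limit inside the mixture: it rests on the continuous trajectorial LD convergence of the conditional transient laws when the initial empirical cdf is controlled only in $\bbC_0$ --- which is exactly what makes Theorem \ref{the:heavy_traffic} and the compact--uniform convergence of Lemma \ref{le:idemp_meas_limit} applicable, and where the $\bbC_0$--exponential tightness of Theorem \ref{the:emp_cdf} is indispensable --- together with the general mixture machinery of Appendix \ref{cont_conv}.
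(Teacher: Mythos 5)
Your proof is essentially the same as the paper's: both pass to the stationary version with the residual–service–time empirical cdf appended to the state (the paper uses $\Psi_n(t)=(X_n(t),\breve U_n(t))$), extract a subsequential LD limit from the exponential tightness of Sections \ref{sec:expon-tightn-stat}--\ref{sec:residual}, feed the continuous trajectorial LD convergence from Theorem \ref{the:heavy_traffic} into the mixture Lemma \ref{le:LDconv_mixt}, and then let $t\to\infty$ via Lemma \ref{le:idemp_meas_limit} to identify the limit with $\hat\Pi$. The one step you gloss over is the verification of the hypotheses of Lemma \ref{le:LDconv_mixt} — upper semicontinuity of $(x,\phi,y)\mapsto\Pi_{x,\phi,t}(y)$ and uniform tightness of the kernels $\Pi_{x,\phi,t}$ over compacts in $(x,\phi)$ — which the paper spells out, but this does not affect the overall correctness of the argument.
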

\begin{proof}
Let $\breve U_n(t)=(\tilde U_n(x,t)\,, x\in\R_+)
$\,, as defined in \eqref{eq:28}.
Let  $\Psi_n(t)=(X_n(t),\breve U_n(t))\,$ and
$\Psi_n=(\Psi_n(t)\,,t\in\R_+)$\,. It is a
stationary process with values in
 $S=\R\times\D(\R_+,\R)$\,. Generic elements of $S$ are denoted by
 $\psi=(x,\phi)$\,, where $x\in\R$ and $\phi\in\D(\R_+,\R)$\,.
By  stationarity, for  $\Gamma\subset\R$\,,
\begin{equation}
  P(X_n(0)\in\Gamma)
=\int_{S} P(X_n(t)\in
\Gamma|\Psi_n(0)=\psi)\,P(\Psi_n(0)
\in d\psi)
\,.
  \label{eq:235}
\end{equation}
  LD limits are  taken next on both sides of the latter equation.
By respective Theorems \ref{the:stat_exp_tight} and \ref{the:emp_cdf}, 
 the sequences of
 the distributions of $X_n(0)$ 
and $\breve U_n(0)$ are exponentially tight and
 $\bbC_0$--exponentially tight, respectively.
Therefore, the sequence of the distributions of $\Psi_n(0)$ is
exponentially tight too. Let $\tilde\Pi$ represent a deviability
on $\R\times \bbC_0(\R_+,\R)$ that
is an LD limit point of the distributions of 
$\Psi_n(0)$\,. Thus, with $(X(0),\Phi)$ being the canonical idempotent
variable on $\R\times \bbC_0(\R_+,\R)$ endowed with $\tilde\Pi$\,,
it can  be assumed that, for some
subsequence 
$n'$\,, $(X_{n'}(0),\tilde U_{n'}(0))$ LD converges to
$(X(0),\Phi)$\,.

The hypotheses of Lemma \ref{le:LDconv_mixt} are checked next for the
mixture on the righthand side of \eqref{eq:235}.
 Suppose that $\psi_{n'}=(x_{n'},\phi_{n'})\to \psi=(x,\phi)$\,, where 
$\psi_{n'}$
belongs to the support of the distribution of
 $\Psi_{n'}(0)$ and $\tilde\Pi(\psi)>0$\,.
More explicitly,
$\psi_{n'} $ assumes values in 
the set 
\begin{align*}
  \Delta_{n'}=\{\sqrt{n'}/b_{n'}
(l/n'-1),(\sqrt{n'}/b_{n'}(1-\hat F(x)
-(1-F^{(0)}(x)))\,,x\in\R_+):\,l\in \{0\}\cup\N\,,\\ \text{
 $\hat F=(\hat F(x)\,,x\in\R_+)$ representing a  discrete cdf on 
$\R_+$ with jumps being multiples of $1/n'$}\}\,.
\end{align*}
In addition, $\phi\in\bbC_0(\R_+,\R)$\,.
As follows from \eqref{eq:67}, the   distribution at $t$ of the
 solution to the
equation 
\begin{multline*}
    X_{n'}(s)=(1-F(s))x_{n'}^+-(1-F^{(0)}(s)) x_{n'}^-+
\phi_{n'}(s)+
\int_0^sX_{n'}(s-s')^+\,dF(s')
\\+H_{n'}(s)-\Theta_{n'}(s)
\end{multline*}
 is  a regular version of 
the conditional distribution
of $X_{n'}(t)$ given $\Psi_{n'}(0)=\psi_{n'}$\,.
 Denote it by $P_{n',\psi_{n'},t}$\,.
By part 2 of Theorem \ref{the:heavy_traffic} (with $\phi_n$ as $\Phi_n$ and
$\phi$ as $\Phi$)\,,  $X_{n'}$
 LD converges in distribution at rate
$b_{n'}^2$ to the idempotent process $X=(X(s)\,,s\in\R_+)$ such that 
\begin{multline*}
           X(s)=(1-F(s))x^+- (1-F^{(0)}(s))x^{-}
+\phi(s)-\beta F^{(0)}(s)
+\int_0^sX(s-s')^+\,dF(s')
\\+\sigma\int_0^s\bl(1-F(s-s')\br)\,\dot W(s')\,ds'
+\int_0^s\int_{0}^s \ind{x'+s'\le s}\,\dot K(  F(x'),\mu s')\,dF(x')\,\mu\,ds'
\,.
\end{multline*}
 Hence, $P_{n',\psi_{n'},t}$ LD converges to
the idempotent distribution of $X$ at $t$\,, which is 
 $\Pi_{x,\phi,t}$\,. Since the mapping $(W,K)\to X$ is continuous
 $\Pi$--a.e.,
\[
   \Pi_{x,\phi,t}(y)=\sup_{(w,k)\in \Sigma_{x,\phi,y,t}}\Pi^W(w)\Pi^K(k)\,,
 \]
where $\Sigma_{x,\phi,y,t}$ represents the set of
 $(w,k)\in\Upsilon$ such that $q(t)=y$\,, with $(q(s)\,,s\in[0,t])$ being
defined by the equation
\begin{multline*}
             q(s)=(1-F(s))x^+- (1-F^{(0)}(s))x^{-}
+\phi(s)-\beta F^{(0)}(s)
+\int_0^sq(s-s')^+\,dF(s')
\\+\sigma\int_0^s\bl(1-F(s-s')\br)\,\dot w(s')\,ds'
+\int_0^s\int_{0}^s \ind{x'+s'\le s}\,\dot k(F(x'),  \mu s')\,dF(x')\,\mu\,ds'
\,.
\end{multline*}
The deviability density $\Pi_{x,\phi,t}(y)$ is seen to be
  upper semicontinuous in
$(x,\phi,y)$\,. Indeed, let $(x_{\tilde n},\phi_{\tilde n},y_{\tilde n})\to (x,\phi,y)$\,.
The sets $\Sigma_{x_{\tilde n},\phi_{\tilde n},y,t_{\tilde n}}$ being closed implies the
existence of $(w_{\tilde n},k_{\tilde n})\in \Sigma_{x_{\tilde n},\phi_{\tilde n},y,t_{\tilde n}}$ such that 
$\Pi_{x_{\tilde n},\phi_{\tilde n},t}(y_{\tilde n})=\Pi^W(w_{\tilde n})\Pi^K(k_{\tilde n})$\,. Provided
$\limsup_{n\to\infty}\Pi^W(w_{\tilde n})\Pi^K(k_{\tilde n})>0$\,, it may be assumed 
by passing to subsequences if necessary that 
$w_{\tilde n}\to w$ and $k_{\tilde n}\to k$\,, where  $(w,k)\in
\Sigma_{x,\phi,y,t}$\,. Since
\[\limsup_{\tilde n\to\infty}\Pi^W(w_{\tilde n})\le \Pi^W(w)\] and 
\[\limsup_{\tilde n\to\infty}\Pi^K(k_{\tilde n})\le \Pi^K(k)\,,\]
it follows that
\[\limsup_{\tilde n\to\infty}
\Pi_{x_{\tilde n},\phi_{\tilde n},t}(y_{\tilde n})=\limsup_{\tilde n\to\infty}\Pi^W(w_{\tilde n})\Pi^K(k_{\tilde n})\le \Pi(w)\Pi(k)\le 
\Pi_{x,\phi,t}(y)\,.\]
A similar line of reasoning shows that
   the deviability
 $(\Pi_{x,\phi,t}(\Gamma)\,,\Gamma\subset \R)$ is tight uniformly
over $(x,\phi)$ from compact subsets of $\R\times\bbC_0(\R_+,\R)$\,, i.e.,
the set $\cup_{(x,\phi)}\{y:\,\Pi_{x,\phi,t}(y)\ge a\}$ is compact for
all $a>0$\,, where the union is over $(x,\phi)$ from a compact.

By Lemma \ref{le:LDconv_mixt},
the mixture of probabilities
 on the righthand  side of \eqref{eq:235} LD converges
along $n'$ to 
the deviability
$(\sup_{\psi\in S}\Pi_{\psi,t}(\Gamma)\tilde\Pi(\psi)\,, \Gamma\subset\R)$\,.
Therefore,
for bounded nonnegative continuous function $f$\,,
\begin{equation}
  \label{eq:191}
    \sup_{y\in\R}f(y)\tilde\Pi(X(0)=y)=
\sup_{y\in\R}f(y)\sup_{\psi\in S}
\Pi_{\psi,t}(y)\tilde\Pi(\psi)\,.
\end{equation}
By Lemma \ref{le:idemp_meas_limit} and the set $\{\psi:\,
\tilde\Pi(\psi)\ge a\}$ being compact, where $a>0$\,,
the righthand side of \eqref{eq:191} converges, as $t\to\infty$\,,
to
$\sup_{y\in\R}f(y)
\hat \Pi(y)
$
so that
\[\sup_{y\in\R}f(y)\tilde\Pi(X(0)=y)=
\sup_{y\in\R}f(y)
\hat \Pi(y)\,,\] which implies that
$\tilde\Pi(X(0)=y)=\hat\Pi(y)$\,.
 Thus, $\hat\Pi$ is a
deviability, \eqref{eq:11'''} holds
  and the sequence of the distributions of $X^{(s)}_n$
LD converges to  $\hat\Pi$\,.
\end{proof}

\appendix

\section{ Moment bounds for counting renewal processes}
\label{sec:higher}
Let $(A(t)\,, t\in\R_+)$ represent an ordinary  counting renewal process
of rate $\lambda$\,, let $\xi$ represent a generic inter--arrival time
and let $(A'(t)\,, t\in\R_+)$ represent the associated equilibrium counting
renewal process.
Let  $(EA(t))^{\ast(m)}$ represent the $m$th convolution power of $EA(t)$\,.
\begin{lemma}
  \label{le:conv_moments}
 For all $m\in\N$ and $t\in\R_+$\,,
\[
 ( EA(t))^{\ast(m)}\le \frac{(1+ \lambda t)^m}{m!}
(1+
 m E(\lambda\xi)^2)^m\,.
\]
\end{lemma}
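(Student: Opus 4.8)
The plan is to reduce the statement to an induction on $m$ built on the $m=1$ case. Write $U(t)=EA(t)$ for the renewal function, regard $U$ as the distribution function of its Lebesgue--Stieltjes measure $dU$ on $\R_+$ (which is locally finite since $\xi>0$ a.s.), and read $(EA(t))^{\ast(m)}$ as $g_m(t):=(dU)^{\ast m}([0,t])$. Associativity of convolution of measures on $\R_+$ gives $g_m(t)=\int_{[0,t]}g_{m-1}(t-s)\,dU(s)$, so it suffices to prove by induction on $m$ that $g_m(t)\le\frac{(1+\lambda t)^m}{m!}(1+mc)^m$ for all $t$, where $c:=E(\lambda\xi)^2=\lambda^2E\xi^2$. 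Note $c\ge(E\lambda\xi)^2=1$ by the Cauchy--Schwarz inequality; this is used at the very end.

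For the base case $m=1$ I would show $EA(t)\le\lambda t+\lambda^2E\xi^2\le(1+\lambda t)(1+c)$. Applying Wald's identity to the stopping time $\tau=A(t)+1$, which has finite mean $EA(t)+1$ because $\xi>0$ a.s., and writing $S_\tau=t+R_t$ with $R_t\ge0$ the forward recurrence time at $t$, one gets $(EA(t)+1)/\lambda=ES_\tau=t+ER_t$, that is $EA(t)=\lambda t+\lambda ER_t-1$. Lorden's inequality $ER_t\le E\xi^2/E\xi$ then yields $EA(t)\le\lambda t+\lambda^2E\xi^2-1$, and since $(1+\lambda t)(1+\lambda^2E\xi^2)=(1+\lambda t+\lambda^2E\xi^2)+\lambda^3 tE\xi^2\ge\lambda t+\lambda^2E\xi^2$, the base case follows.

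For the inductive step, assume the bound for $m-1$. Since $s\mapsto\frac{(1+\lambda s)^{m-1}}{(m-1)!}(1+(m-1)c)^{m-1}$ is nondecreasing, the recursion and the inductive hypothesis give
\[
g_m(t)\le\frac{(1+\lambda t)^{m-1}(1+(m-1)c)^{m-1}}{(m-1)!}\int_{[0,t]}dU(s)=\frac{(1+\lambda t)^{m-1}(1+(m-1)c)^{m-1}}{(m-1)!}\,U(t),
\]
which by the $m=1$ bound is at most $\frac{(1+(m-1)c)^{m-1}(1+c)}{(m-1)!}(1+\lambda t)^m$. It remains to verify $m\,(1+(m-1)c)^{m-1}(1+c)\le(1+mc)^m$. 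Writing $1+mc=(1+(m-1)c)+c$ and retaining only the first two of the (nonnegative) binomial terms,
\[
(1+mc)^m\ge(1+(m-1)c)^m+mc\,(1+(m-1)c)^{m-1}=(1+(m-1)c)^{m-1}\bigl(1+(2m-1)c\bigr);
\]
and $1+(2m-1)c-m(1+c)=(m-1)(c-1)\ge0$ because $c\ge1$, so $1+(2m-1)c\ge m(1+c)$ and the induction closes.

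The step I expect to require the most care is the base case, specifically the appeal to Lorden's inequality for the forward recurrence time; it is classical and may simply be cited, or its short argument inserted here. Everything else is routine bookkeeping with convolutions of Radon measures on $\R_+$ together with the elementary implication $c\ge1\Rightarrow 1+(2m-1)c\ge m(1+c)$, and it is precisely the fact that $E(\lambda\xi)^2\ge1$ that makes this last step go through.
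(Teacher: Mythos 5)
Your argument is correct, and it shares the paper's overall skeleton (induction on $m$, base case via Lorden's bound $EA(t)\le\lambda t+\lambda^2E\xi^2$), but the inductive step is genuinely different. The paper rescales so the generic interarrival has mean $1$, and then in the recursion $(E\hat A)^{\ast(m+1)}(t)=\int_0^tE\hat A(t-s)\,d(E\hat A)^{\ast(m)}(s)$ it splits the Lorden bound into its linear and constant parts, $E\hat A(u)\le u+E\hat\xi_1^2$: the linear part is integrated (via $\int_0^t(t-s)\,dG(s)=\int_0^tG(s)\,ds$) and produces the extra factor $1/(m+1)$ automatically, while the constant part contributes a factor $(1+(m+1)E\hat\xi_1^2)$; this yields the sharper intermediate product bound $\frac{(1+t)^m}{m!}\prod_{i=1}^m(1+iE\hat\xi_1^2)$, which dominates the stated power bound without any further assumption on the moments. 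You instead bound the integrand $g_{m-1}(t-s)$ by its supremum over $[0,t]$ and multiply by $U(t)$, which throws away the factor $1/m$ that the paper's integration supplies; you then recover it by the elementary inequality $m(1+(m-1)c)^{m-1}(1+c)\le(1+mc)^m$, whose proof crucially uses $c=E(\lambda\xi)^2\ge 1$ (Cauchy--Schwarz). So your proof is shorter to state but relies on the a priori lower bound $c\ge1$, whereas the paper's convolution computation is slightly more involved but gives a strictly stronger inequality and works for any $c\ge0$. Both are valid proofs of the lemma as stated.
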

\begin{proof}
Let $\xi_1,\xi_2,\ldots$ represent the inter--arrival times for
$A(t)$\,.
Let 
\begin{equation}
  \label{eq:163}
  \hat \xi_i=\lambda\xi_i
\end{equation}
 and
\[
  \hat A(t)=\max\{i\in\N:\,\sum_{j=1}^i\hat\xi_j\le t\}.
\]
Then, $E\hat\xi_1=1$ and
\begin{equation}
  \label{eq:37}
  A(t)=\hat A(\lambda t)\,.
\end{equation}
It is proved by induction that 
\begin{equation}
  \label{eq:95}
   ( E\hat A(t))^{\ast(m)}\le \frac{(1+ t)^m}{m!}
\prod_{i=1}^m(1+
 i E\hat\xi_1^2)\,.
\end{equation}
By Lorden \cite{Lor70}, see also Beichelt and Fatti \cite{BeiFat02},
\begin{equation}
  \label{eq:216}
      E\hat A(t)\le t+E\hat\xi_1^2\,,
\end{equation}
which checks the required for $m=1$\,.
Suppose that
\eqref{eq:95} holds for some $m$\,.
Then, 
\begin{multline*}
  ( E\hat A(t))^{\ast(m+1)}=\int_0^t E\hat A(t-s)d
( E\hat A(s))^{\ast(m)}\le\int_0^t( (t-s)+ E\hat\xi_1^2)
d( E\hat A(s))^{\ast(m)}\\
=\int_0^t( E\hat A(s))^{\ast(m)} \,ds+ E\hat\xi_1^2
 E\hat A(t)^{\ast(m)}
\le\frac{ (1+ t)^{m+1}}{(m+1)! }\prod_{i=1}^m(1+
 i  E\hat\xi_1^2)\\
+ E\hat\xi_1^2\frac{(1+ t)^{m}}{m! }\prod_{i=1}^m(1+
 i  E\hat\xi_1^2)
\le\frac{( 1+ t)^{m+1}}{(m+1)! }\prod_{i=1}^{m+1}(1+
 i  E\hat\xi_1^2)\,.
\end{multline*}
By \eqref{eq:163}, \eqref{eq:37} and \eqref{eq:95},
\[
  E A(t)^{\ast(m)}= E\hat A(\lambda t)^{\ast(m)}
\le \frac{(1+\lambda t)^m}{m!}
\prod_{i=1}^m(1+
 i E(\lambda\xi)^2)\,.
\]
 \end{proof}
 \begin{lemma}
  \label{le:renewal_moments}
For
all ordinary counting renewal processes $(A(s)\,,s\in\R_+)$\,, all $m\in\N $ 
 and all $t\in\R_+$\,,
\begin{equation}
  \label{eq:120}
        E   A(t)^m\le 
2^{m-1} 
((1+\lambda t)^m(1+m E(\lambda\xi)^2)^m +m^m)\,.
\end{equation}
There exists $C'>1$ such that, for all inter--arrival times $\xi$\,,    all
$t\in\R_+$\,, and all $m\in\N    $\,,
\begin{equation}
  \label{eq:120a}
        E   A'(t)^m\le  C'^{m}((1+\lambda t)^m(1+m E(\lambda \xi)^2)^m
        +m^m)\,.
\end{equation}

 \end{lemma}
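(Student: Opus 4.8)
The plan is to pass from the ordinary moments of $A(t)$ to its factorial moments, to use the fact that for a renewal process the latter are \emph{exactly} the Stieltjes convolution powers of the renewal function $EA(\cdot)$, and then to invoke Lemma \ref{le:conv_moments}. The starting point is the combinatorial identity
\[
  E\bl(A(t)(A(t)-1)\cdots(A(t)-m+1)\br)=m!\,(EA(t))^{\ast(m)}\,,
\]
which follows by writing $S_k$ for the $k$th renewal epoch, using $\binom{A(t)}{m}=\sum_{1\le k_1<\dots<k_m}\ind{S_{k_m}\le t}$, taking expectations and grouping by the value of $k_m$: this gives $E\binom{A(t)}{m}=\sum_{k\ge m}\binom{k-1}{m-1}F^{\ast k}(t)$, where $F$ denotes the generic inter-arrival cdf, and the same count applied to $EA(t)=\sum_{j\ge1}F^{\ast j}(t)$ gives $(EA(t))^{\ast(m)}=\sum_{k\ge m}\binom{k-1}{m-1}F^{\ast k}(t)$.

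Next I would use the elementary pointwise bound, valid for every non-negative integer $a$ and every $m\in\N$,
\[
  a^m\le 2^{m-1}\bl(m^m+a(a-1)\cdots(a-m+1)\br)\,.
\]
For $a\le m$ this is immediate from the term $m^m$; for $a\ge 2m$ one has $a^m/\bl(a(a-1)\cdots(a-m+1)\br)=\prod_{j=1}^{m-1}a/(a-j)\le(2m/(m+1))^{m-1}\le 2^{m-1}$, the factor corresponding to $j=0$ being $1$; the finitely many values $m<a<2m$ are disposed of by a short case analysis (the $m^m$ term absorbing $a^m$ when $a$ is close to $m$, the falling factorial absorbing it when $a$ is close to $2m$). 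Taking $a=A(t)$, taking expectations, and inserting the identity above together with the estimate $m!\,(EA(t))^{\ast(m)}\le(1+\lambda t)^m(1+mE(\lambda\xi)^2)^m$ of Lemma \ref{le:conv_moments} yields \eqref{eq:120}.

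For \eqref{eq:120a} I would reduce to the ordinary case through the shift already used in the proof of Lemma \ref{le:norm_bound}: letting $\xi'_1$ be the epoch of the first renewal of $A'$, the process $\tilde A(s)=A'(s+\xi'_1)-1$ is an ordinary counting renewal process with the same generic inter-arrival time---hence distributed as $A$---and $A'(t)\le\tilde A(t)+1$ for every $t$, pathwise. Consequently $EA'(t)^m\le E(A(t)+1)^m\le 2^{m-1}(EA(t)^m+1)$ by convexity of $x\mapsto x^m$, and substituting \eqref{eq:120} and absorbing the additive constant (using $m^m\ge1$) gives \eqref{eq:120a} with, for instance, $C'=4$.

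The only step that is not entirely routine is the pointwise inequality in the middle range $m<a<2m$, and even there the verification is a finite computation that I do not expect to cause trouble; once it is in place, the proof is a mechanical combination of the convolution-power identity with Lemma \ref{le:conv_moments}, with the only care needed being the bookkeeping of the Stieltjes convolutions when establishing the identity.
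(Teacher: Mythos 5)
Your proof is correct in substance and follows essentially the same route as the paper: the factorial-moment identity $E\binom{A(t)}{m}=(EA(t))^{\ast(m)}$, the bound of Lemma \ref{le:conv_moments}, an elementary inequality turning the ordinary $m$th moment into the $m$th factorial moment plus an $m^m$ term, and a reduction of $A'$ to $A$. For the equilibrium process the paper conditions on the delay $\tau$ (writing $EA'(t)^m=\int_0^t E(1+A(t-s))^m\,dG(s)\le E(1+A(t))^m$), while you shift time by the epoch $\xi'_1$ of the first renewal and use the pathwise monotonicity $A'(t)\le A'(t+\xi'_1)=\tilde A(t)+1$; these give the same bound $EA'(t)^m\le E(1+A(t))^m$.

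The one place where the paper is cleaner is the elementary pointwise bound. Rather than proving $a^m\le 2^{m-1}\bl(m^m+a(a-1)\cdots(a-m+1)\br)$ by splitting into the ranges $a\le m$, $a\ge 2m$, and $m<a<2m$, the paper writes $a\le m+(a-m)^+$, applies convexity of $x\mapsto x^m$ to get $a^m\le 2^{m-1}\bl(m^m+((a-m)^+)^m\br)$, and then notes that $((a-m)^+)^m\le (a-m+1)^m\le a(a-1)\cdots(a-m+1)$ for every integer $a\ge m$ (while both sides vanish for $0\le a<m$, since then a factor of the falling factorial is zero). This yields your inequality in two lines and makes the middle-range case analysis unnecessary. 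Be careful with your remark that $m<a<2m$ is ``a finite computation'': for each $m$ there are $m-1$ values of $a$ to check, so the verification has to be uniform in $m$, not a single finite check; the inequality does hold, but the $(a-m)^+$ decomposition is the transparent way to see it.
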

 \begin{proof}
      As shown in  Barlow and Proschan \cite[Lemma 4.1]{BarPro63} or 
 in Franken \cite[Lemma 3]{Fra63}, see also p.107 in
Beichelt and Fatti \cite{BeiFat02},
 for $m\in\N$\,,
\[
  E\Bl(
  \begin{array}[c]{c}
    A(t)\\m
  \end{array}\Br)=(EA(t))^{\ast(m)}\,,
\]
where the lefthand side represents the $m$th factorial moment of
$A(t)$\,, i.e.,
\[
    E\Bl(
  \begin{array}[c]{c}
    A(t)\\m
  \end{array}\Br)=\frac{1}{m!}\,E
\bl(A(t)(A(t)-1)\ldots( A(t)-m+1)\br)\,.
\]
Hence,
\[
    E   ((A(t)-m)^+)^m\le m!(EA(t))^{\ast(m)}\,.
\]
Therefore, 
\[
    E   A(t)^m\le 2^{m-1} 
( E (  (A(t)-m)^+)^m+m^m)
\le 2^{m-1} 
(m!\,(EA(t))^{\ast(m)} +m^m)\,,
\]
implying \eqref{eq:120} thanks to Lemma \ref{le:conv_moments}.

  Let $G(t)$ represent the delay distribution for $A'$ and let $\tau$
represent the delay.
Then,
\[
    EA'(t)^{m}=\int_0^t E(A'(t)^{m}|\tau=s)\,dG(s)
=\int_0^t E(1+A(t-s))^{m}\,dG(s)\le
 E(1+A(t))^{m}
\]
so that \eqref{eq:120a} follows from \eqref{eq:120}.

 \end{proof}

\begin{lemma}
  \label{le:stat_moments}
There exists $\tilde C>1$ such that, for
all ordinary counting renewal processes $(A(s)\,,s\in\R_+)$ of rate $\lambda$\,, 
  all $t\in\R_+$\,,
 and all  $p\ge2$\,,
 \begin{equation}
   \label{eq:91a}
          E  \abs{ A(t)-\lambda t}^p\le
\tilde C^p\bl((1+\lambda t)
    E(\lambda\xi)^{p+1}
+(1+\lambda t)^{p/2+1} p^{p/2+1} (E(\lambda\xi)^2)^{3p/2}
\br)\,.
 \end{equation}
A similar inequality holds for $A'$\,.
In addition,  for all $t\in\R_+$\,,
\begin{equation}
  \label{eq:214}
  E  \abs{ A'(t)-\lambda t}^2\le 2\lambda t
(\lambda^2E\xi^2+\frac{1}{2})\,.
\end{equation}
\end{lemma}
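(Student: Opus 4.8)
The plan is to reduce to an ordinary rate-one renewal process and then to control the deviations of the renewal count from its mean by passing to the random walk of centred interarrival times. Put $\hat\xi_i=\lambda\xi_i$ and $\hat A(u)=\max\{i:\,\hat\xi_1+\dots+\hat\xi_i\le u\}$, so that $A(t)=\hat A(\lambda t)$, $E\hat\xi_1=1$, and it suffices to prove $E\abs{\hat A(s)-s}^p\le\tilde C^p\bl((1+s)E\hat\xi_1^{p+1}+(1+s)^{p/2+1}p^{p/2+1}(E\hat\xi_1^2)^{3p/2}\br)$ with $s=\lambda t$. Using $\{\hat A(s)\ge k\}=\{\hat\xi_1+\dots+\hat\xi_k\le s\}$ and $\{\hat A(s)\le m\}=\{\hat\xi_1+\dots+\hat\xi_{m+1}>s\}$, one has for $r\ge1$ that $P(\hat A(s)\ge s+r)=P\bl(\sum_{i=1}^{\lceil s+r\rceil}(1-\hat\xi_i)\ge\lceil s+r\rceil-s\br)$ with $\lceil s+r\rceil-s\ge r$, and for $1\le r\le s$ that $P(\hat A(s)\le s-r)\le P\bl(\sum_{i=1}^{\lfloor s-r\rfloor+1}(\hat\xi_i-1)\ge r-1\br)$ with $\lfloor s-r\rfloor+1\le\lfloor s\rfloor$. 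One then writes $E\abs{\hat A(s)-s}^p=\int_0^\infty pr^{p-1}P(\abs{\hat A(s)-s}\ge r)\,dr\le1+\int_1^\infty pr^{p-1}P(\hat A(s)\ge s+r)\,dr+\int_1^s pr^{p-1}P(\hat A(s)\le s-r)\,dr$ and estimates the two integrals separately.

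The key point, and the main obstacle, is that the two tails must be treated with different inequalities because the centred increments are one-sidedly heavy. For the lower deviations the summands $\hat\xi_i-1$ are mean zero and heavy-tailed above, but there are at most $\lfloor s\rfloor$ of them; a Fuk--Nagaev (single-big-jump plus Gaussian) decomposition, followed by the substitution $v=cr$, turns the big-jump part of $\int_1^s pr^{p-1}P(\cdot)\,dr$ into a quantity of the order of $\lfloor s\rfloor c^{-p}E\hat\xi_1^p\le 2c^{-p}(1+s)E\hat\xi_1^{p+1}$ — with only an exponential constant in $p$, precisely because the number of summands never exceeds $\lfloor s\rfloor$ — and the Gaussian part into a quantity of the order of $(psE\hat\xi_1^2)^{p/2}$, both dominated by the right-hand side of \eqref{eq:91a}. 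For the upper deviations the summands $1-\hat\xi_i$ are bounded above by $1$ with variance at most $E\hat\xi_1^2$, so there is no heavy-tailed regime and a Bennett/Bernstein inequality applies: for $r\le s$ (at most $2s+1$ summands) it gives a sub-Gaussian tail $\exp(-cr^2/(sE\hat\xi_1^2))$, contributing $\lesssim\Gamma(p/2+1)(sE\hat\xi_1^2)^{p/2}\le(psE\hat\xi_1^2)^{p/2}$, and for $r>s$ (of order $r$ summands) an essentially exponential tail $\exp(-cr/E\hat\xi_1^2)$, contributing $\lesssim\Gamma(p+1)(E\hat\xi_1^2)^p\le p^p(E\hat\xi_1^2)^p$. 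Assembling these with the elementary estimates $E\hat\xi_1^p\le1+E\hat\xi_1^{p+1}$, $\Gamma(p/2+1)\le(p/2)^{p/2}$, $\Gamma(p+1)\le p^p$, and the power-mean comparison of $E(\lambda\xi)^2$ with $E(\lambda\xi)^{p+1}$ used to absorb the $p^p(E\hat\xi_1^2)^p$ contribution, and then undoing the scaling, yields \eqref{eq:91a}; matching the constants in the $r>s$ regime of the upper tail is the step demanding the most care.

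The second-moment inequality \eqref{eq:214} is obtained separately and sharply from the exact variance of an equilibrium renewal process: since $EA'(t)=\lambda t$ and, by the classical identity for the second factorial moment, $E[A'(t)(A'(t)-1)]=2\lambda\int_0^t m(s)\,ds$ with $m(s)=EA(s)$ the ordinary renewal function, one has $E\abs{A'(t)-\lambda t}^2=\mathrm{Var}\,A'(t)=\lambda t+2\lambda\int_0^t(m(s)-\lambda s)\,ds$, and Lorden's bound $m(s)-\lambda s\le\lambda^2E\xi^2$ (the rate-$\lambda$ form of \eqref{eq:216}) gives exactly $2\lambda t(\lambda^2E\xi^2+\tfrac12)$. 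Finally, the ``$A'$'' form of \eqref{eq:91a} follows by conditioning on the delay $\sigma$, whose density is $\lambda P(\xi>u)$: on $\{\sigma=u\le t\}$ one has $A'(t)=1+\tilde A(t-u)$ for an independent ordinary renewal process, whence $E\abs{A'(t)-\lambda t}^p\le 2^{p-1}\int_0^t\bl(E\abs{\tilde A(t-u)-\lambda(t-u)}^p+\abs{1-\lambda u}^p\br)\lambda P(\xi>u)\,du+(\lambda t)^pP(\sigma>t)$, and the already-proved ordinary bound, together with $\int_0^\infty(\lambda u)^p\lambda P(\xi>u)\,du=E(\lambda\xi)^{p+1}/(p+1)$ and $(\lambda t)^pP(\sigma>t)\le E(\lambda\xi)^{p+1}/p$ (from $P(\xi>u)\le u^{-(p+1)}E\xi^{p+1}$), closes the argument at the cost of enlarging the universal constant.
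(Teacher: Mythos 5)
Your approach takes a genuinely different route from the paper's. After the normalisation $\hat\xi_i=\lambda\xi_i$, you pass through the tail integral $E\abs{\hat A(s)-s}^p=\int_0^\infty pr^{p-1}P(\abs{\hat A(s)-s}\ge r)\,dr$ and control the two tails by Fuk--Nagaev and Bennett/Bernstein inequalities. The paper instead writes $A(t)-t=-1+\bl(\sum_{i=1}^{A(t)+1}\xi_i-t\br)+\sum_{i=1}^{A(t)+1}(1-\xi_i)$, bounds the overshoot $\sum_{i=1}^{A(t)+1}\xi_i-t$ by Lorden's Theorem~3, bounds the randomly stopped centred sum $\sum_{i=1}^{A(t)+1}(\xi_i-1)$ by the Rosenthal-type inequality of Borovkov and Utev for sums stopped at a Markov time, and finishes with the moment bounds of Lemma~\ref{le:renewal_moments}. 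The point of the paper's route is that the $p$-dependent constants from the overshoot and Rosenthal steps stay exponential in $p$, so that the $p^{p/2+1}$ factor in \eqref{eq:91a} is inherited only from $EA(t)^{p/2}$.

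There is a genuine gap in your treatment of the upper tail $P(\hat A(s)\ge s+r)$ in the regime $r>s$. There the summands $1-\hat\xi_i$ are bounded above by $1$, the number of them is of order $r$, and a variance-based Bernstein or Bennett bound gives at best $P\lesssim\exp(-cr/E\hat\xi_1^2)$ with a universal $c$; integrating, the $r>s$ contribution is of size $\Gamma(p+1)\bl(E\hat\xi_1^2/c\br)^p$, which carries a factor $\Gamma(p+1)\sim p^pe^{-p}$. The target \eqref{eq:91a}, however, permits only a $p^{p/2+1}$ factor (times an exponential $\tilde C^p$). After the normalisation $E\hat\xi_1=1$ Jensen gives $E\hat\xi_1^2\ge1$, so $(E\hat\xi_1^2)^p\le(E\hat\xi_1^2)^{3p/2}$ is at least in the right direction, but that leaves an unabsorbed superexponential $p^{p/2-1}$ which no choice of $\tilde C$ can compensate. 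The power-mean comparison you invoke does not repair this: it yields $(E\hat\xi_1^2)^p\le(E\hat\xi_1^{p+1})^{2p/(p+1)}$, and since $E\hat\xi_1^{p+1}\ge1$ and $2p/(p+1)>1$ this quantity is already at least $E\hat\xi_1^{p+1}$, so the term $(1+s)E\hat\xi_1^{p+1}$ cannot absorb a $p^p$ factor either. Concretely, with $\hat\xi_1$ uniform on $[0,2]$ and $s=1$, your Bennett estimate of the $r>1$ integral is $\gtrsim\Gamma(p+1)c^{-p}$ while \eqref{eq:91a} is of order $C_1^pp^{p/2}$, and the ratio diverges as $p\to\infty$. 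You flag this as the step demanding the most care, but the proposal does not show how to close it, and closing it appears to require a tail bound on $\hat A(s)$ that is genuinely sharper than variance-based Bennett when $s$ is fixed and $r$ is large. The remaining components of the proposal --- the Fuk--Nagaev split for the lower tail, the sub-Gaussian $r\le s$ regime, the exact variance identity behind \eqref{eq:214}, and the passage from $A$ to $A'$ by conditioning on the delay --- appear sound; the gap is confined to this single term.
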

\begin{proof}
For ease of notation, suppose that $E\xi=1$ so that $\lambda=1$\,.
  Since
\[
   A(t)-  t=-1+ \sum_{i=1}^{ A(t)+1}\xi_i-t+
\sum_{i=1}^{ A(t)+1}(1- \xi_i)\,,
\]
by Jensen's inequality,
\begin{equation}
  \label{eq:118}
      E\abs{ A(t)- 
  t}^{p}\le 3^{p-1}\bl(1+ E\abs{
\sum_{i=1}^{ A(t)+1}\xi_{i}-t}^{p}+E\abs{
\sum_{i=1}^{ A(t)+1}(\xi_{i}-1)}^{p}
\br)\,.
\end{equation}
By Theorem 3 in Lorden \cite{Lor70},
\[
  E\abs{
\sum_{i=1}^{ A(t)+1}\xi_{i}-t}^{p}\le \frac{p+2}{p+1}\, 
E\xi^{p+1}\,.
\]
In order to bound the last term on the righthand side of
\eqref{eq:118},
 Theorem 1 in Borovkov and Utev
\cite{BorUte93} is used: more specifically, by p.265 there
with $c=p(p-1)2^{p-2}$\,,
\footnotemark\; 
if
 $p\ge2$\,, then
\begin{multline}
  \label{eq:217}
    E\abs{
\sum_{i=1}^{ A(t)+1}(\xi_{i}-1)}^{p}\le
2 p(p-1)2^{p-2}E\abs{\xi-1}^pE( A(t)+1)\\+
\frac{4}{p}\,2(p(p-1)2^{p-2})^2(\frac{p}{p-1})^{(p-1)(p-2)/p}(E(\xi-1)^2)^{p/2}E( A(t)+1)^{p/2}\,.
\end{multline}
\footnotetext{Proof of (11) in Borovkov 
 and Utev \cite{BorUte93}. Since
$d\abs{x}^p/dx =d(x^2)^{p/2}/dx=p/2(x^2)^{p/2-1}2x =p\abs{x}^{p-2}x$ and $d^2\abs{x}^p/dx^2
 =p(p-2)\abs{x}^{p-4}x^2+p\abs{x}^{p-2}$\,,
  for $p\ge2$\,, 
 \begin{multline*}
   \abs{x+y}^p-\abs{x}^p-p\abs{x}^{p-2}xy=\int_0^1(1-t)y^2
(p(p-2)\abs{x+ty}^{p-4}(x+ty)^2+p\abs{x+ty}^{p-2})\,dt\\\le
y^2(p(p-2)+p)(2(\abs{x}\vee\abs{y}))^{p-2}\le
p(p-1)2^{p-2}(y^2\abs{x}^{p-2}+\abs{y}^{p})\,.
 \end{multline*}
}
It follows that
\begin{multline*}
      E\abs{ A(t)- 
  t}^{p}\le
3^{p-1}\bl(1+\frac{p+2}{p+1}\, 
E\xi^{p+1}
+2 p(p-1)2^{p-2}(E\xi^p+1)(E A(t)+1)\\+
p(p-1)^22^{2p}(\frac{p}{p-1})^{(p-1)(p-2)/p}
(E(\xi-1)^2)^{p/2}E( A(t)+1)^{p/2}\br)  \,.
\end{multline*}
Thus, for some $\tilde C'>0$\,,
\[
  E\abs{ A(t)- 
  t}^{p}\le
\tilde C'^p\bl(1+ 
E\xi^{p+1}
+E\xi^pE A( t)+
(E\xi^2)^{p/2}E A( t)^{p/2}\br)  \,.
\]

 Now, \eqref{eq:120} implies that
\[        E   A(t)^{p/2}\le  2^{p/2}((1+ t)^{p/2+1}(1+p
 E \xi^2)^{p/2+1}
        +p^{p/2+1})\,.
\] The inequality   \eqref{eq:91a} 
 follows if one recalls  \eqref{eq:216}.
The case of $A'$ is tackled as in the proof of Lemma \ref{le:renewal_moments}.

For \eqref{eq:214}, note that
by (7) on p.57 in Cox \cite{cox1970renewal}, see also Whitt \cite{Whi85},
\[
  E(A'(t)-\lambda t)^2=2\lambda\,\int_0^t(EA(u)-\lambda u+\frac{1}{2})\,du\,.
\]
On recalling \eqref{eq:216},
 $EA(u)-\lambda u\le \lambda^2E\xi^2$\,.
\end{proof}

 \section{A nonlinear renewal equation}\label{renewal}
This section is concerned with the  properties of the equation
\begin{equation}
  \label{eq:100}
  g(t)=f(t)+\int_0^tg(t-s)^+\,dF(s)\,, t\in\R_+\,.
\end{equation}
It is assumed that $f(t)$ is a locally bounded measurable
function,
that $F(t)$ is a  non--lattice distribution function on $\R_+$  and that
\begin{equation}
  \label{eq:203}
   F(0)<1\,.
\end{equation}
The existence and uniqueness  of an essentially locally bounded
solution $g(t)$ to \eqref{eq:100} follows from Lemma B.2 in Puhalskii and
Reed \cite{PuhRee10}.
\begin{lemma}
  \label{le:renewal_limit}
Suppose that $g(t)$ satisfies \eqref{eq:100}.
If $f(t)\to-\alpha<0$\,,  then $g(t)\to-\alpha$\,, as $t\to\infty$\,.
 \end{lemma}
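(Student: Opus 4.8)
The plan is to prove the two one-sided bounds $\liminf_{t\to\infty}g(t)\ge-\alpha$ and $\limsup_{t\to\infty}g(t)\le-\alpha$. The first is immediate: the integral in \eqref{eq:100} is nonnegative, so $g(t)\ge f(t)$, whence $\liminf_{t\to\infty}g(t)\ge\liminf_{t\to\infty}f(t)=-\alpha$. It therefore suffices to show $L:=\limsup_{t\to\infty}g(t)\le-\alpha$, after which $-\alpha\le\liminf_{t\to\infty} g(t)\le L\le-\alpha$ forces $g(t)\to-\alpha$. Throughout I use that $g$ may be taken locally bounded (it is an essentially locally bounded solution of \eqref{eq:100}), and the elementary fact that, since $\alpha>0$, an inequality of the form $c\le-\alpha+c^+$ forces $c\le-\alpha$ (if $c>0$ it would give $\alpha\le0$).

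\textbf{Step 1: an a priori upper bound.} Fix $\varepsilon\in(0,\alpha)$ and $T$ with $f(t)\le-\alpha+\varepsilon$ for all $t\ge T$, and put $G(t)=\sup_{0\le u\le t}g(u)<\infty$. For $u\in[T,t]$, bounding $\int_0^u g(u-s)^+\,dF(s)\le G(t)^+F(u)\le G(t)^+$ yields $g(u)\le-\alpha+\varepsilon+G(t)^+$, hence $G(t)\le\max\bl(G(T),\,-\alpha+\varepsilon+G(t)^+\br)$. Were $G(t)>\max(G(T),0)$, this would give $\alpha\le\varepsilon$, a contradiction; therefore $\sup_{t}g(t)\le\max(G(T),0)=:M<\infty$.

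\textbf{Step 2: a self-improving bound for $L$.} Given $\eta>0$, choose $T_\eta$ with $g(u)\le L+\eta$ and $f(u)\le-\alpha+\eta$ for all $u\ge T_\eta$. Split $\int_0^tg(t-s)^+\,dF(s)$ at $s=t-T_\eta$: for $s\le t-T_\eta$ the argument $t-s\ge T_\eta$, so the integrand is $\le(L+\eta)^+$ and that part is $\le(L+\eta)^+$; for $s\in[t-T_\eta,t]$ the integrand is $\le M$, so that part is $\le M\bl(F(t)-F(t-T_\eta)\br)$, which tends to $0$ as $t\to\infty$ since the monotone bounded $F$ has a finite limit at infinity. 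Hence $g(t)\le-\alpha+\eta+(L+\eta)^++M\bl(F(t)-F(t-T_\eta)\br)$; taking $\limsup_{t\to\infty}$ and then letting $\eta\downarrow0$ gives $L\le-\alpha+L^+$, so $L\le-\alpha$ and the lemma follows.

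The step I expect to be the crux is Step 1. Without an a priori bound on $g$, the part of the convolution in Step 2 with $t-s$ near $0$ cannot be controlled; moreover the naive remedy of dominating $g$ by the solution of the linear renewal equation $\bar g(t)=f(t)^+ +\int_0^t\bar g(t-s)\,dF(s)$ is useless, since that solution grows roughly like $t$ times the Cesàro mean of $f^+$. It is precisely the composition of $(\cdot)^+$ with a strictly negative drift that suppresses this growth, and $\alpha>0$ enters decisively both in Step 1 and in the contraction $L\le-\alpha+L^+$. I note that the non-lattice assumption and $F(0)<1$ are not needed for this argument; they serve only to secure, via Lemma B.2 in \cite{PuhRee10}, existence and uniqueness of the solution being analysed.
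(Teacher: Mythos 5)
Your proof is correct, and it takes a genuinely different — and more elementary — route than the paper's. The paper establishes the a priori bound $\sup_{t}g(t)^+<\infty$ by comparing $g^+$ with the solution $y$ of the \emph{linear} renewal equation $y(t)=f(t)^+ + \int_0^t y(t-s)\,dF(s)$ (proving $g^+\le y$ via a Gronwall-type iteration that invokes $F(0)<1$), and then bounding $y$ through the key renewal theorem (which is where the non-lattice hypothesis enters: $f^+$ has bounded support, hence is directly Riemann integrable, so $y(t)$ converges); finally it applies the reverse Fatou lemma to \eqref{eq:100} to reach $\limsup g\le-\alpha+\limsup g^+$. Your Step 1 replaces all of that machinery with a one-line bootstrap on $G(t)=\sup_{u\le t}g(u)$, using only $F\le 1$ and $\alpha>0$, and your Step 2 replaces the Fatou argument with a direct split of the convolution at $s=t-T_\eta$. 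Both proofs funnel into the same contraction $L\le-\alpha+L^+$. What your argument buys is simplicity and slightly greater generality: you correctly observe that neither the non-lattice assumption nor $F(0)<1$ is used, whereas the paper's proof uses both (the former for the key renewal theorem, the latter to close the comparison $u(t^\ast)\le 0$ when $F$ attains $1$). One side remark of yours is off, however: you claim that dominating by the linear renewal solution with inhomogeneity $f^+$ is ``useless'' because that solution grows like $t$ times the Cesàro mean of $f^+$. In fact the Cesàro mean vanishes here (since $f^+$ has bounded support), and the linear solution converges to a finite limit by the key renewal theorem — this is precisely the paper's route, and it works. The remark does not affect your proof, but it misidentifies why your Step 1 is an improvement; the gain is avoiding the key renewal theorem, not avoiding a divergent comparison.
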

\begin{proof}
  Since $g(t-s)^+\ge0$\,, by \eqref{eq:100},
  \[
    \liminf_{t\to\infty}g(t)\ge -\alpha\,.
  \]
It needs to be proved that
\begin{equation}
  \label{eq:173}
  \limsup_{t\to\infty}g(t)\le-\alpha\,.
\end{equation}
By \eqref{eq:100},
\begin{equation}
  \label{eq:175}
  g(t)^+\le f(t)^++\int_0^tg(t-s)^+\,dF(s)\,.
\end{equation}
Let $y(t),\,t\in\R_+\,,$ represent the unique locally bounded measurable
function that solves the renewal equation
\begin{equation}
  \label{eq:176}
  y(t)=f(t)^++\int_0^ty(t-s)\,dF(s)\,.
\end{equation}
(For the existence and uniqueness of $y(t)$\,, see, e.g.,
Theorem 2.4 on p.146 in Asmussen \cite{Asm03} or Theorem
3.5.1 on p.202 in Resnick \cite{Res02}.)
With $u(t)=g(t)^+-y(t)$\,,
\begin{equation}
  \label{eq:181}
  u(t)\le \int_0^t u(t-s)\,dF(s)\,.
\end{equation}
Hence,
\begin{equation}
  \label{eq:184}
  u(t)\le \sup_{s\le t}u(s)\,F(t)\,.
\end{equation}
Similarly, for $s\le t$\,,
\begin{equation}
  \label{eq:185}
  u(s)\le \sup_{s'\le s}u(s')F(s)\le \sup_{s'\le t}u(s')F(t)\,,
\end{equation}
the latter inequality holding because $\sup_{s'\le t}u(s')\ge0$\,, as
$u(0)=0$\,,
and because $F(s)$ is nondecreasing. It follows  that
\begin{equation}
  \label{eq:192}
  \sup_{s\le t}u(s)\le F(t)\sup_{s\le t}u(s)
\end{equation}
implying that
\begin{equation}
  \label{eq:193}
  \sup_{s\le t}u(s)\le0\,,
\end{equation}
provided $F(t)<1$\,. With $t^\ast=\inf\{s:\,F(s)=1\}$\,,  for any
sequence
$t_n\uparrow t^\ast$\,, $\sup_{s\le t_n}u(s)\le0$ and $\sup_{s\le
  t_n}u(s)\to
\sup_{s< t}u(s)$ so that
$\sup_{s< t^\ast}u(s)\le0$\,. Hence,
\[
\int_0^{t^\ast} u(t^\ast-s)\,dF(s)=\int_0^{t^\ast}
u(t^\ast-s)\ind{s>0}\,dF(s)+
u(t^\ast) F(0)
\le u(t^\ast) F(0)\,.
\]
By \eqref{eq:181} and \eqref{eq:203},
$u(t^\ast)\le0$ so that \eqref{eq:193} holds  for $t=t^\ast$ too.
If $t>t^\ast$\,, then
\[
    \int_0^tu(s)\,dF(s)=\int_0^{t^\ast}u(s)dF(s)\le0\,,
\]
implying \eqref{eq:193} once again.
(An alternative way to get rid of the restriction that $F(t)<1$ in
\eqref{eq:193}
 is to
look at iterations of \eqref{eq:181}, similarly to the 
proof of   part (2)
of Theorem 3.5.1 on p.202 in Resnick \cite{Res02}.)   

It has thus been proved that
\begin{equation}
  \label{eq:204}
  \sup_{s\le t}g(s)^+\le\sup_{s\le t}y(s)\,.
\end{equation}
Since $f(t)\to-\alpha$\,,  as $t\to\infty$\,, the function $f(t)^+$ is of
bounded support, so, it is directly Riemann integrable, cf. Remark
3.10.1 on p.234  in Resnick \cite{Res02}.
By the key renewal theorem, see, e.g., Theorem 3.10.1 on p.237
along with Theorem 3.5.1 on p.302 in
Resnick \cite{Res02}, with the account of part  (1) 
of Theorem 3.5.1 on p.202 in Resnick \cite{Res02},
\[
\lim_{t\to\infty}  y(t)= \frac{1}{\int_0^\infty s\,dF(s)}\, \int_0^\infty f(s)^+\,ds<\infty\,.
\]
It follows that $\sup_{t\in\R_+}y(t)<\infty$ so that, by \eqref{eq:204},
$  \sup_{t\in\R_+}g(t)^+<\infty\,.
$
The latter inequality enables one to apply Fatou's lemma to
\eqref{eq:100} in order to obtain that
\begin{multline*}
   \limsup_{t\to\infty} g(t)\le
\limsup_{t\to\infty} f(t)+\limsup_{t\to\infty}\int_0^\infty 
g(t-s)^+\ind{s\le t}\,dF(s)\\\le -\alpha+\int_0^\infty \limsup_{t\to\infty}
g(t-s)^+\ind{s\le t}\,dF(s)\le-\alpha+ \limsup_{t\to\infty}
g(t)^+ \,,
\end{multline*}
implying \eqref{eq:173}.

\end{proof}\begin{lemma}
    \label{le:monot}
Suppose that $g_1(t)$ and $g_2(t)$ are solutions to \eqref{eq:100}
that correspond to respective functions $f_1(t)$ and $f_2(t)$ as the function
$f(t)$\,. If $f_1(t)\le f_2(t)$\,, for all $t$\,, then
$g_1(t)\le g_2(t)$\,, for all $t$\,.
  \end{lemma}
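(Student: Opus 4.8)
The plan is to reduce the statement to the Gronwall--type argument already carried out in the proof of Lemma~\ref{le:renewal_limit}. Put $u(t)=g_1(t)-g_2(t)$; the goal is to show $u(t)\le0$ for all $t$. Subtracting the two instances of \eqref{eq:100} gives
\[
u(t)=\bl(f_1(t)-f_2(t)\br)+\int_0^t\bl(g_1(t-s)^+-g_2(t-s)^+\br)\,dF(s)\,.
\]
By hypothesis $f_1(t)\le f_2(t)$, so the first term is nonpositive. For the integrand I would use the elementary pointwise inequality $a^+-b^+\le(a-b)^+$, valid for all real $a,b$ (consider separately the cases $a\ge b$ and $a<b$), which gives $g_1(t-s)^+-g_2(t-s)^+\le u(t-s)^+$ and hence
\[
u(t)\le\int_0^t u(t-s)^+\,dF(s)\,.
\]

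Next I would pass to $v(t)=u(t)^+\ge0$. Since the right-hand side of the last display is nonnegative, taking positive parts of both sides preserves it, yielding $v(t)\le\int_0^t v(t-s)\,dF(s)$. As $g_1$ and $g_2$ are essentially locally bounded, so is $v$, whence $\sup_{s\le t}v(s)<\infty$ for every $t$, and evidently $\sup_{s\le t}v(s)\ge v(0)\ge0$. This is exactly the situation of \eqref{eq:181} in the proof of Lemma~\ref{le:renewal_limit}, with $v$ in the role of $u$ there. I would then repeat the argument of \eqref{eq:184}--\eqref{eq:193} verbatim: $v(t)\le F(t)\sup_{s\le t}v(s)$, and since $v(s)\le F(s)\sup_{s'\le s}v(s')\le F(t)\sup_{s'\le t}v(s')$ for $s\le t$ (using that $\sup_{s'\le t}v(s')\ge0$ and $F$ is nondecreasing), one gets $\sup_{s\le t}v(s)\le F(t)\sup_{s\le t}v(s)$, hence $\sup_{s\le t}v(s)\le0$ whenever $F(t)<1$; the continuation across $t^\ast=\inf\{s:\,F(s)=1\}$ is handled exactly as in that proof. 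Therefore $v\equiv0$, i.e., $u(t)\le0$, which is the claim $g_1(t)\le g_2(t)$.

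I do not anticipate a serious obstacle here. The only points requiring care are the a.e.\ nature of the solutions to \eqref{eq:100}, which is dealt with as in Lemma~B.2 of Puhalskii and Reed \cite{PuhRee10}, and the removal of the restriction $F(t)<1$; the latter is already resolved in the proof of Lemma~\ref{le:renewal_limit}, and alternatively one may iterate $v(t)\le\int_0^t v(t-s)\,dF(s)$ to obtain $v(t)\le\sup_{s\le t}v(s)\,F^{\ast(m)}(t)$ for every $m$ and let $m\to\infty$, using that the renewal function $\sum_m F^{\ast(m)}(t)$ is finite so $F^{\ast(m)}(t)\to0$. The substantive content is the inequality $a^+-b^+\le(a-b)^+$, which turns the nonlinearity $g\mapsto g^+$ into a monotone $1$-Lipschitz operation and thereby lets the linear renewal comparison go through unchanged.
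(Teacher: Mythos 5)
Your proof is correct, but it takes a genuinely different route from the paper. The paper's entire proof is the single line ``Use Picard iterations'': define $g^{(0)}=f$ and $g^{(m+1)}(t)=f(t)+\int_0^tg^{(m)}(t-s)^+\,dF(s)$, observe that the map is monotone (since $x\mapsto x^+$ is nondecreasing) so that $f_1\le f_2$ propagates to $g_1^{(m)}\le g_2^{(m)}$ for every $m$, and pass to the limit using the convergence of the iterations guaranteed by the existence/uniqueness result of Puhalskii--Reed. You instead compare the solutions directly: the Lipschitz--monotone inequality $a^+-b^+\le(a-b)^+$ turns the nonlinear comparison into the linear renewal inequality $v(t)\le\int_0^tv(t-s)\,dF(s)$ for $v=(g_1-g_2)^+$, which you then kill by the same Gronwall-type bookkeeping already written out in \eqref{eq:181}--\eqref{eq:193} (or, as you note, by iterating to $v(t)\le\sup_{s\le t}v(s)\,F^{\ast(m)}(t)$ and letting $m\to\infty$). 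The trade-off: the Picard route is shorter but leans on the convergence of the iterative scheme as an external input; your argument is self-contained and has the pleasant feature of reusing a computation the paper already needs for Lemma~\ref{le:renewal_limit}, at the cost of a few more lines. Both are valid; neither is obviously preferable.
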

  \begin{proof}
    Use Picard iterations.
  \end{proof}
\section{$\bbC_0$--exponential tightness}\label{c_0}
 Let  
$\bbC_0(\R_+,\R)$ represent the subset of $\bbC(\R_+,\R)$ of
functions $\phi=(\phi(t)\,,t\in\R_+)$ such that $\phi(t)\to0$\,, as
$t\to\infty$\,.
It is endowed with a uniform norm  topology so that  set
$B\subset \bbC_0(\R_+,\R)$ is compact if it is compact as a subset of
$\bbC(\R_+,\R)$ and $\phi(t)\to0$\,, as $t\to\infty$\,, uniformly on
$B$\,.
Sequence
$X_n$ of stochastic processes with trajectories in
$\D(\R_+,\R)$ is said to be $\bbC_0$--exponentially tight
of order $r_n$ if it is  
$\bbC$--exponentially tight of order $r_n$ and its all 
 LD limit points are idempotent processses with trajectories 
in $\bbC_0(\R_+,\R)$\,.
\begin{lemma}
   \label{the:c0}
Sequence $X_n$ of real--valued processes
 is $\bbC_0$--exponentially tight of order $r_n$
if and only if it is
$\bbC$--exponentially tight of order $r_n$ and, for all $\epsilon>0$\,,
\begin{equation}
  \label{eq:179}
  \lim_{t\to\infty}\limsup_{n\to\infty}
      P(\abs{X_n(t)}>\epsilon)^{1/r_n}=0\,.
\end{equation}
 \end{lemma}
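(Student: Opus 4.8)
The plan is to prove the two implications separately, with the "only if" direction being essentially immediate and the "if" direction carrying the real content. For the "only if" direction, suppose $X_n$ is $\bbC_0$--exponentially tight of order $r_n$. Then by definition it is $\bbC$--exponentially tight, and every LD limit point $\Pi$ is carried by $\bbC_0(\R_+,\R)$. Fix $\epsilon>0$ and suppose, for contradiction, that $\limsup_{n\to\infty} P(\abs{X_n(t_m)}>\epsilon)^{1/r_n}=:\delta>0$ along some sequence $t_m\to\infty$; passing to a subsequence along which the distributions of $X_n$ LD converge to some deviability $\Pi$, the closed-set bound of the LDP gives $\Pi(\{q:\,\abs{q(t_m)}\ge\epsilon\})\ge\delta$ for each $m$. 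By upper compactness there is $q_m$ in the compact level set $\{\Pi\ge\delta\}$ with $\abs{q_m(t_m)}\ge\epsilon$; extracting a uniformly convergent subsequence $q_m\to q\in\{\Pi\ge\delta\}\subset\bbC_0$ and using $t_m\to\infty$ together with uniform convergence yields $\abs{q(t_m)}\to 0$, a contradiction once $\abs{q_m(t_m)-q(t_m)}<\epsilon/2$ and $\abs{q(t_m)}<\epsilon/2$. Hence \eqref{eq:179} holds.

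For the "if" direction, assume $X_n$ is $\bbC$--exponentially tight and \eqref{eq:179} holds. Let $\Pi$ be any LD limit point, realised on $\bbC(\R_+,\R)$ with canonical idempotent variable $q$; I must show $\Pi$ is concentrated on $\bbC_0$, i.e. that for every $a>0$ the level set $\{q:\,\Pi(q)\ge a\}$ consists of functions vanishing at infinity, in fact uniformly over that (compact) set. Fix $a>0$ and $\epsilon>0$. From \eqref{eq:179} there is $t_0$ such that for $t\ge t_0$, $\limsup_n P(\abs{X_n(t)}>\epsilon)^{1/r_n}<a$; along the LD-convergent subsequence the closed-set bound then gives $\Pi(\{q:\,\abs{q(t)}\ge\epsilon\})<a$, i.e. $\abs{q(t)}<\epsilon$ for every $q$ with $\Pi(q)\ge a$. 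Since $\epsilon>0$ was arbitrary this shows $q(t)\to0$ as $t\to\infty$ uniformly on $\{\Pi\ge a\}$, which is precisely $\bbC_0$--tightness (the set $\{\Pi\ge a\}$ is compact in $\bbC$ by $\bbC$--exponential tightness, and adding uniform decay at infinity makes it compact in the uniform-norm topology of $\bbC_0$).

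The only delicate point, and the one I would write out carefully, is the passage from the asymptotic probability bound \eqref{eq:179} to the pointwise bound $\abs{q(t)}<\epsilon$ on the level set of the limit deviability: this uses the LDP upper bound for the closed set $\{q:\,\abs{q(t)}\ge\epsilon\}$ together with the characterisation of the deviation function via $-\ln\Pi$, and one must be slightly careful that the strict inequality in \eqref{eq:179} transfers to a non-strict membership statement that still excludes the level set $\{\Pi\ge a\}$ — choosing the threshold as $a$ (not $a/2$) and using that $P(\abs{X_n(t)}\ge\epsilon)\le P(\abs{X_n(t)}>\epsilon/2)$ handles this. A second, more cosmetic, point is to confirm that "vanishing at infinity uniformly on each $\Pi$--level set" is the right reformulation of "all LD limit points are carried by $\bbC_0$"; this is standard for upper compact set functions, since each level set is compact and a compact subset of $\bbC$ lies in $\bbC_0$ exactly when its members decay uniformly. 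I expect no genuine obstacle here beyond keeping the quantifiers on $\epsilon$, $a$, $t$ and the subsequence in the correct order.
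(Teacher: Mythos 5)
The main gap is in the \emph{if} direction, which is also the direction the paper actually proves. You invoke ``the closed-set bound'' / ``the LDP upper bound for the closed set $\{q:\abs{q(t)}\ge\epsilon\}$'' to deduce $\Pi(\{q:\abs{q(t)}\ge\epsilon\})<a$ from $\limsup_n P(\abs{X_n(t)}>\epsilon)^{1/r_n}<a$. That inequality runs the wrong way: the closed-set (upper) bound reads $\limsup_n P(X_n\in F)^{1/r_n}\le\Pi(F)$, so a small $\limsup_n P$ tells you nothing about $\Pi(F)$. What is needed, and what the paper uses, is the open-set \emph{lower} bound for the open set $G=\{q:\abs{q(t)}>\epsilon\}$,
\[
  \liminf_{n\to\infty}P(\abs{X_n(t)}>\epsilon)^{1/r_n}\ge\Pi(\abs{X(t)}>\epsilon)\,,
\]
combined with $\liminf\le\limsup$ and \eqref{eq:179}, to obtain $\Pi(\abs{X(t)}>\epsilon)\to0$ as $t\to\infty$; this yields $\abs{q(t)}\le\epsilon$ eventually, uniformly on $\{\Pi\ge a\}$, which is what you wanted. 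With that correction your argument coincides with the paper's.

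A secondary remark on the \emph{only if} direction (which the paper treats as not needing proof): extracting a ``uniformly convergent subsequence'' $q_m\to q$ from $\{\Pi\ge\delta\}$ assumes that this level set is compact in the sup-norm topology of $\bbC_0(\R_+,\R)$. A $\bbC$-compact family of trajectories each vanishing at infinity need not decay uniformly (bump functions $q_m$ supported on $[m,m+1]$ and converging to $\mathbf 0$ on compacts give a counterexample), and uniform decay on level sets is precisely the content the ``if'' direction establishes. If ``$\bbC_0$-exponential tightness'' is read as merely ``LD limit deviabilities charge only $\bbC_0$ trajectories,'' your extraction step, and indeed the ``only if'' implication itself, would fail; the reading that makes the equivalence work (and matches the paper's sentence defining compactness in $\bbC_0$) is that the limit deviabilities have level sets compact in $\bbC_0$, i.e.\ with uniform decay, in which case the ``only if'' direction is immediate and your subsequence argument goes through. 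Either way, you should make this hypothesis explicit rather than passing from $\bbC$-compactness to uniform convergence without comment.
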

 \begin{proof}
 It needs to be proved that
\eqref{eq:179} implies that the LD limit points 
$\Pi$ of $X_n$ are supported by $\bbC_0(\R_+,\R)$\,, i.e.,
$\Pi(\bbC(\R_+,\R)\setminus\bbC_0(\R_+,\R))=0$\,.
By the definition of LD convergence,
\[
  \liminf_{n\to\infty}
      P(\abs{X_n(t)}>\epsilon)^{1/r_n}\ge 
\Pi(\abs{X(t)}>\epsilon)\,.
\]
Hence,
\[
  \lim_{t\to\infty}\Pi(\abs{X(t)}>\epsilon)=0\,.
\]
The latter is equivalent to the convergence
\begin{equation}
  \label{eq:194}
\lim_{t\to\infty}  \sup_{X:\, \Pi(X)\ge a}\abs{X(t)}=0 \,, \text{ for any }
a>0\,.
\end{equation}
(As a byproduct, the set $\{X:\, \Pi(X)\ge a\}$ is  compact in
$\bbC_0(\R_+,\R)$\,.)
 \end{proof}

\section{ LD convergence of mixtures}\label{cont_conv}
The main result of this section is Lemma 
\ref{le:LDconv_mixt} on the LD convergence of mixtures of
probability distributions. The next lemma is a special case of
Lemma A.1 in Puhalskii \cite{Puh23a}.
\begin{lemma}
  \label{le:cont}
Let $S$ be a metric space and $P_n$ be a sequence of probability
measures
on the Borel $\sigma$-algebra of $S$\,. Suppose that the sequence $P_n$
LD converges at rate $n$ to deviability $\Pi$ on $S$\,.
 Let $h_n$  be 
$\R_+$--valued bounded Borel functions on $S$ and let $h$ be an
$\R_+$--valued function on $S$\,.
If 
$h_n(y_n)\to h(y)$\,, as $n\to\infty$\,, for every sequence $y_n$ of
elements of $S$ and every $y$ such that $y_n\to y$\,,
 $y_n$ belongs to the support of $P_n$\,, and $\Pi(y)>0$\,, then
 \[
   \lim_{n\to\infty}\bl(\int_Sh_n(y)^n\,P_n(dy)\br)^{1/n}=
\sup_{y\in S}h(y)\Pi(y)\,.
 \]
\end{lemma}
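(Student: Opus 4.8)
I would prove Lemma \ref{le:cont} by a Laplace--Varadhan style localisation. Write $L=\sup_{y\in S}h(y)\Pi(y)$ and abbreviate $\int_Sh_n^n\,dP_n:=\int_Sh_n(y)^n\,P_n(dy)$; also put $C=\sup_n\sup_{y\in S}h_n(y)$, which is finite in the intended applications of the lemma (there the $h_n$ are bounded by a common constant), so that $L\le C$ as well. The plan is to establish
\[
  \limsup_{n\to\infty}\Bl(\int_Sh_n^n\,dP_n\Br)^{1/n}\le L\le\liminf_{n\to\infty}\Bl(\int_Sh_n^n\,dP_n\Br)^{1/n}\,.
\]
Throughout I use that LD convergence of $P_n$ to the deviability $\Pi$ at rate $n$ amounts to the upper bound $\limsup_nP_n(F)^{1/n}\le\sup_{y\in F}\Pi(y)$ over closed $F$ and the lower bound $\liminf_nP_n(G)^{1/n}\ge\sup_{y\in G}\Pi(y)$ over open $G$, that $\Pi(\cdot)$ is upper semicontinuous with $\{y:\Pi(y)\ge a\}$ compact for $a>0$, and hence that the sequence $P_n$ is exponentially tight.

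The key elementary fact, read off from the hypothesis by a compactness-and-contradiction argument, is: \emph{if $\Pi(y)>0$ and $\epsilon>0$, there are an open $U\ni y$ and an $N\in\N$ such that $\abs{h_n(z)-h(y)}\le\epsilon$ whenever $n\ge N$ and $z\in U\cap\operatorname{supp}P_n$}. If this failed one could choose $n_m\uparrow\infty$ and $z_m\to y$ with $z_m\in\operatorname{supp}P_{n_m}$ and $\abs{h_{n_m}(z_m)-h(y)}>\epsilon$; since $\Pi(y)>0$ forces $P_n$ to charge every neighbourhood of $y$ for all large $n$, one may complete $(z_m)$ to a full sequence $y_n\to y$ with $y_n\in\operatorname{supp}P_n$, and then $h_n(y_n)\not\to h(y)$ contradicts the hypothesis.

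For the lower bound, fix $y^\ast$ with $\Pi(y^\ast)>0$ and $h(y^\ast)>0$ (points with $h(y^\ast)\Pi(y^\ast)=0$ are irrelevant), take $\epsilon\in(0,h(y^\ast))$ and the open $U\ni y^\ast$ above; then $\int_Sh_n^n\,dP_n\ge(h(y^\ast)-\epsilon)^nP_n(U)$ for $n\ge N$, so the lower bound for the open set $U$ gives $\liminf_n(\int_Sh_n^n\,dP_n)^{1/n}\ge(h(y^\ast)-\epsilon)\Pi(y^\ast)$; let $\epsilon\downarrow0$ and take the supremum over $y^\ast$. For the upper bound, given $\epsilon>0$ and $\alpha>0$ pick a compact $K$ with $\limsup_nP_n(S\setminus K)^{1/n}\le e^{-\alpha}$ and cover $K$ by finitely many open sets $U_1,\dots,U_m$ chosen as follows: at a point $y_j$ with $\Pi(y_j)>0$ take $U_j$ as above, shrunk so that in addition $\sup_{\overline{U_j}}\Pi\le\Pi(y_j)+\epsilon$ (upper semicontinuity of $\Pi$), so that the upper bound for $\overline{U_j}$ gives $\limsup_n(\int_{U_j}h_n^n\,dP_n)^{1/n}\le(h(y_j)+\epsilon)(\Pi(y_j)+\epsilon)\le L+\epsilon(C+1+\epsilon)$; at a point $y_j$ with $\Pi(y_j)=0$ take $U_j$ with $\sup_{\overline{U_j}}\Pi\le\epsilon$, so that $\limsup_n(\int_{U_j}h_n^n\,dP_n)^{1/n}\le C\epsilon$. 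Since $\int_Sh_n^n\,dP_n\le\sum_{j=1}^m\int_{U_j}h_n^n\,dP_n+C^nP_n(S\setminus K)$, extracting $n$-th roots and taking $\limsup_n$ yields $\limsup_n(\int_Sh_n^n\,dP_n)^{1/n}\le\max\{L+\epsilon(C+1+\epsilon),\,C\epsilon,\,Ce^{-\alpha}\}$; finally let $\epsilon\downarrow0$ and $\alpha\uparrow\infty$.

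The part I expect to demand the most care is the upper bound, specifically discarding the contributions from the region where $\Pi=0$ and from the mass that escapes outside the compact $K$: this is precisely where the exponential tightness inherent in LD convergence to a deviability and the uniform boundedness of the $h_n$ come in. (Alternatively, the lemma is a direct specialisation of Lemma A.1 in Puhalskii \cite{Puh23a} and can simply be quoted.)
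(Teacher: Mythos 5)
The paper does not prove Lemma \ref{le:cont} at all: it states it as a special case of Lemma A.1 in Puhalskii \cite{Puh23a} and leaves the proof there, a shortcut you also flag at the end. Your direct Laplace--Varadhan argument is a correct self-contained alternative: the ``key elementary fact'' (a locally uniform version of the continuous-convergence hypothesis, obtained by contradiction together with the observation that $\Pi(y)>0$ forces $P_n$ to charge every neighbourhood of $y$ eventually, via the LD lower bound) localises $h_n$ near points of positive deviability; the lower bound is then the LD lower bound on one such neighbourhood; and the upper bound is a finite-cover argument that combines the locally uniform control, upper semicontinuity of $\Pi$, and exponential tightness to discard the tail. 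The route is different from the paper's only in that you actually carry out the argument rather than deferring to the reference.

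One substantive point deserves emphasis. You introduce $C=\sup_n\sup_{y\in S}h_n(y)$ and note that it is finite ``in the intended applications.'' That caveat is not cosmetic: the lemma as stated only asks each $h_n$ to be bounded, not uniformly so, and without uniform boundedness the conclusion can fail. Take $S=\R$, $P_n=(1-2^{-n^2})\delta_0+2^{-n^2}\delta_n$, which LD converges at rate $n$ to the deviability with $\Pi(0)=1$ and $\Pi\equiv0$ elsewhere; set $h_n(0)=1$, $h_n(n)=4^n$, $h_n\equiv0$ off $\{0,n\}$, and $h(0)=1$. The continuous-convergence hypothesis holds (any $y_n\to0$ with $y_n\in\operatorname{supp}P_n=\{0,n\}$ is eventually $0$), yet $\bigl(\int_S h_n^n\,dP_n\bigr)^{1/n}\sim 2^n\to\infty$ while $\sup_{y}h(y)\Pi(y)=1$. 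So the uniform bound your proof relies on is genuinely required; it does hold in the only application (Lemma \ref{le:LDconv_mixt}, where $h_n\le\sup f$) and is presumably built into the cited Lemma A.1, but your proof is right to make it explicit, since it is absent from the statement as given.
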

\begin{lemma}
  \label{le:LDconv_mixt}
Let $S$ and $S'$ be metric spaces endowed with Borel $\sigma$--algebras. Let, for each $n$\,,
 $P_{n,y}(dy')$ be a Markov kernel from $S$ to $S'$ and $\mu_n(dy)$ be a
 probability measure on the Borel $\sigma$--algebra of $S$\,. Let
 $\Pi_y$ be a deviability on $S'$\,, for each $y\in S$\,, and let
 $\mu$ be a deviability on $S$\,. Let $
 P_n(\Gamma')=\int_SP_{n,y}(\Gamma')\,\mu_n(dy)$\,, for Borel
 $\Gamma'\subset S'$\,, and let $\Pi(\Gamma')=\sup_{y\in
   S}\Pi_y(\Gamma')\mu(y)$\,, for $\Gamma'\subset S'$\,.
Suppose that the deviabilites $\Pi_y$ are uniformly tight over $y$
from compact sets, i.e., for arbitrary $\epsilon>0$ and compact
$K\subset S$\,,
 there exists compact $K'\subset S'$ such that
$\sup_{y\in K}\Pi_y(S'\setminus K')\le \epsilon$\,, and 
   that the function $\Pi_y(y')$ is upper
semicontinuous in $(y,y')\in S\times S'$\,.
Suppose that the sequence $\mu_n$ LD converges to $\mu$ at rate $n$ and 
the sequence $P_{n,y_n}$ of probability measures on $S'$ LD converges to $\Pi_y$\,, for $\mu$--almost every
$y\in S$ and every sequence $y_n\to y$ such that $y_n$ belongs to the
support of $\mu_n$\,. Then, $\Pi$ is a deviability on $S'$ and
the sequence $P_n$ LD converges to $\Pi$ at rate $n$\,. \end{lemma}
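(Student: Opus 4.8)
The plan is to reduce the statement to Lemma \ref{le:cont} by the standard device of testing against exponential-type functionals. Recall that $P_n$ LD converges to $\Pi$ at rate $n$ if and only if $\int_{S'}g(y')^n\,P_n(dy')\to\sup_{y'\in S'}g(y')\Pi(y')$ for every bounded nonnegative continuous $g$ on $S'$ (one also needs $\Pi$ to be a deviability, which will come out of the argument). So fix such a $g$ and compute, using the definition of $P_n$ as a mixture and Fubini,
\[
\int_{S'}g(y')^n\,P_n(dy')=\int_S\Bl(\int_{S'}g(y')^n\,P_{n,y}(dy')\Br)\mu_n(dy)=\int_S h_n(y)^n\,\mu_n(dy)\,,
\]
where $h_n(y)=\bl(\int_{S'}g(y')^n\,P_{n,y}(dy')\br)^{1/n}$, a bounded nonnegative Borel function on $S$ since $0\le h_n\le\norm{g}_\infty$. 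Set $h(y)=\sup_{y'\in S'}g(y')\Pi_y(y')$. If I can check the continuity hypothesis of Lemma \ref{le:cont} for the pair $(h_n,h)$, that lemma applied to $\mu_n\to\mu$ gives
\[
\lim_{n\to\infty}\int_{S'}g(y')^n\,P_n(dy')=\sup_{y\in S}h(y)\mu(y)=\sup_{y\in S}\sup_{y'\in S'}g(y')\Pi_y(y')\mu(y)=\sup_{y'\in S'}g(y')\Pi(y')\,,
\]
the last equality by the definition $\Pi(y')=\sup_y\Pi_y(y')\mu(y)$. That is exactly the asserted LD convergence.

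The step that carries the real content is verifying the hypothesis of Lemma \ref{le:cont}: whenever $y_n\to y$ with $y_n$ in the support of $\mu_n$ and $\mu(y)>0$, one needs $h_n(y_n)\to h(y)$. By hypothesis, for $\mu$-almost every such $y$ the sequence $P_{n,y_n}$ LD converges to $\Pi_y$; and $\mu(y)>0$ forces $y$ to lie in the (closed) set where $\mu$ is positive, which has full $\mu$-measure off a $\mu$-null set, so the almost-everywhere qualifier is harmless here — I will phrase it as: the convergence holds for all $y$ with $\mu(y)>0$ outside a $\mu$-null set $N$, and since $\mu(y)>0$ and $y\in N$ is impossible once $N$ is chosen $\mu$-null and $\mu$ upper compact, we may assume it holds for every relevant $y$. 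Given the LD convergence $P_{n,y_n}\to\Pi_y$, the bounded continuous functional $g^{\,\cdot\,}$ yields $\int_{S'}g(y')^n\,P_{n,y_n}(dy')\to\sup_{y'}g(y')\Pi_y(y')$, i.e. $h_n(y_n)\to h(y)$. So the continuity hypothesis holds.

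Two loose ends remain, and they are where the tightness and semicontinuity assumptions are spent. First, $\Pi$ must be shown to be a deviability on $S'$, i.e. $\sup_{y'}\Pi(y')=1$ and the sets $\{y':\,\Pi(y')\ge a\}$ are compact for $a>0$. The normalisation follows from $\sup_{y'}\Pi(y')=\sup_y(\sup_{y'}\Pi_y(y'))\mu(y)=\sup_y\mu(y)=1$. For compactness of the level sets: if $\Pi(y')\ge a$ then $\Pi_y(y')\mu(y)\ge a$ for some $y$, hence $\mu(y)\ge a$ and $\Pi_y(y')\ge a$; the set $K=\{y:\,\mu(y)\ge a\}$ is compact (as $\mu$ is a deviability), and by uniform tightness of $\{\Pi_y\}_{y\in K}$ there is, for each $\epsilon>0$, a compact $K'_\epsilon\subset S'$ with $\sup_{y\in K}\Pi_y(S'\setminus K'_\epsilon)\le\epsilon$; choosing $\epsilon<a$ shows $\{y':\,\Pi(y')\ge a\}\subset K'_\epsilon$, so that level set is a closed subset (using upper semicontinuity of $(y,y')\mapsto\Pi_y(y')$, which makes $y'\mapsto\Pi(y')=\sup_y\Pi_y(y')\mu(y)$ upper semicontinuous via a standard compactness-of-the-supremising-$y$ argument over $K$) of a compact set, hence compact. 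Second, one should note that the limit $\sup_{y'}g(y')\Pi(y')$ produced above does not depend on the chosen version of the mixture, which is immediate. I expect the main obstacle to be precisely this last bundle: showing $y'\mapsto\Pi(y')$ is upper semicontinuous and its level sets compact, i.e. that the supremum over $y$ in $\Pi(y')=\sup_y\Pi_y(y')\mu(y)$ is attained and behaves well under limits — this is where upper semicontinuity of $\Pi_y(y')$ jointly in $(y,y')$ together with the uniform tightness must be combined carefully, exactly as in the verification done for $\Pi_{x,\phi,t}$ in the proof of Lemma \ref{le:stat_LDP}.
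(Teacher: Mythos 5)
Your proposal is correct and follows essentially the same route as the paper: defining $h_n(y)=\bl(\int_{S'}g(y')^n\,P_{n,y}(dy')\br)^{1/n}$ and $h(y)=\sup_{y'}g(y')\Pi_y(y')$, reducing to Lemma \ref{le:cont}, and establishing upper compactness of $\Pi$ from the uniform tightness of the $\Pi_y$ together with joint upper semicontinuity of $\Pi_y(y')$. The only cosmetic difference is that you spell out the normalisation $\sup_{y'}\Pi(y')=1$ and the remark about the $\mu$-null set (which is automatic in idempotent probability, since $\mu$-null means $\mu\equiv0$ on the set), neither of which the paper makes explicit.
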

\begin{proof}
In order to verify that $\Pi$ is a deviability, one needs to check
that the function $\sup_{y\in S}\Pi_y(y')\mu(y)$ is upper compact.
Given $\gamma>0$\,, owing to $\mu$ being a deviability, the set 
$K=\{y\in S:\, \mu(y)\ge \gamma\}$ is compact and
 $\mathcal{T}=\{y':\,\sup_{y\in S}\Pi_y(y')\mu(y)\ge\gamma\}
=\{y':\,\sup_{y\in K
  }\Pi_y(y')\mu(y)\ge\gamma\}$\,. 
To see that $\mathcal{T}$ is compact, let $y'_n\in\mathcal{T}$\,. 
Since $\sup_{y\in K}\Pi_y(y'_n)\ge\gamma$ and 
the deviabilities $\Pi_y$ are tight uniformly over compact sets of
$y$\,,
  the $y'_n$ form a relatively
compact set. 
Let $y_n\in K$ deliver the sups for $y'_n$ in the definition of
$\mathcal{T}$\,. 
Suppose that $(y_n,y'_n)\to(y,y')$\,. Since 
$\Pi_{y_n}(y'_n)\mu(y_n)\ge\gamma$\,,  by upper semicontinuity,
$\Pi_{y}(y')\mu(y)\ge\gamma$\,, as needed.

In order to verify the LD convergence, let $f$ represent an
  $\R_+$--valued bounded continuous function on $S'$\,.
Let, for $y\in S$\,,
\[
  h_n(y)=\bl(\int_{S'}f(y')^n P_{n,y}(dy')\br)^{1/n}
\]
and 
\[
  h(y)=\sup_{y'\in S'}f(y')\Pi_{y}(y')\,.
\]
By hypotheses, if, given $y_n\in S$ and $y\in S$\,,
$y_n\to y$\,, $y_n$ belongs to the support of $\mu_n$ and $\mu(y)>0$\,,
then $h_n(y_n)\to h(y)$\,. Therefore, 
by Lemma \ref{le:cont},
\[
  \bl(\int_{S'}f(y')^nP_n(dy')\br)^{1/n}=\bl(\int_S
  h_{n'}(y)^{n'}\,\mu_{n'}(dy)\br)^{1/n'}
\to
\sup_{y\in S}
h(y)\mu(y)
=\sup_{y'\in S'}f(y')\Pi(y')\,.
\]
\end{proof}
\begin{remark}
  The lemma is essentially  subsumed by
 Theorem 2 in Biggins \cite{Big04}, a notable distinction being that 
condition {\bf exp-cty} in Biggins \cite{Big04} is required  to be true
for all $\tilde\theta\in\tilde \Theta$\,. This lemma shows that the requirement can be
weakened to apply only to  
$\tilde\theta$ with  $\psi(\tilde\theta)<\infty$ (in the notation of 
Biggins \cite{Big04}), which
corresponds to the condition that $\mu(y)>0$ in Lemma \ref{le:LDconv_mixt}.
Besides, the presentation here follows a
 different path and is more compact.
\end{remark}


\def\cprime{$'$} \def\cprime{$'$} \def\cprime{$'$} \def\cprime{$'$}
  \def\cprime{$'$} \def\cprime{$'$} \def\cprime{$'$} \def\cprime{$'$}
  \def\cprime{$'$} \def\cprime{$'$} \def\cprime{$'$}
  \def\polhk#1{\setbox0=\hbox{#1}{\ooalign{\hidewidth
  \lower1.5ex\hbox{`}\hidewidth\crcr\unhbox0}}} \def\cprime{$'$}
  \def\cprime{$'$} \def\cprime{$'$}

\end{document}